\newtheorem{theorem}{Theorem}[section]
\newtheorem{definition}[theorem]{Definition}
\newtheorem{lemma}[theorem]{Lemma}
\newtheorem{remark}[theorem]{Remark}
\newtheorem{example}[theorem]{Example}
\newtheorem{corollary}[theorem]{Corollary}
\newcommand{\CC}{\mathbb C}
\newcommand{\NN}{\mathbb N}
\newcommand{\RR}{\mathbb R}
\newcommand{\ZZ}{\mathbb Z}
\newcommand{\mce}{\mathcal E}
\newcommand{\mch}{\mathcal H}
\newcommand{\mci}{\mathcal I}
\newcommand{\mck}{\mathcal K}
\newcommand{\mcl}{\mathcal L}
\newcommand{\mcm}{\mathcal M}
\newcommand{\mcn}{\mathcal N}
\newcommand{\mco}{\mathcal O}
\newcommand{\mct}{\mathcal T}
\DeclareMathOperator{\dist}{dist}
\DeclareMathOperator{\sgn}{sgn}
\DeclareMathOperator{\supp}{supp}
\DeclareMathOperator{\End}{End}
\DeclareMathOperator{\MS}{MS}
\DeclareMathOperator{\op}{op}
\DeclareMathOperator{\Id}{Id}
\DeclareMathOperator{\inv}{inv}
\DeclareMathOperator{\equi}{eq}
\DeclareMathOperator{\grad}{grad}
\title{Off-diagonal estimates of partial Bergman kernels on $S^1$-symmetric K\"{a}hler manifolds}
\author{Ood Shabtai\thanks{shabtai@imj-prg.fr\newline Partially supported by the DFG funded project SFB/TRR 191 (Project-ID 281071066-TRR 191), and the ANR-DFG project QuaSiDy (Project-ID ANR-21-CE40-0016).}}
\begin{document}
\maketitle
\begin{abstract} We establish local asymptotic estimates of partial Bergman kernels on closed, $S^1$-symmetric K\"{a}hler manifolds. The main result concerns the scaling asymptotics of partial Bergman kernels at generic off-diagonal points in which they are not negligible. The case of the two-dimensional sphere is discussed in detail. \end{abstract}
\tableofcontents
\section{Introduction}
Let $(M, \omega)$ be a connected closed K\"{a}hler manifold of complex dimension $n$. Assume that $L \to M$ is a holomorphic Hermitian line bundle such that the curvature of its Chern connection equals $-i\omega$. The space of holomorphic sections of $L^{\otimes k}$ is a finite-dimensional complex vector space $\mch_k$, equipped with a natural inner product $\langle \cdot, \cdot \rangle$. The latter is obtained by integrating the fiberwise Hermitian product of $L^{\otimes k}$ with respect to the Liouville measure $\mu = \frac{\lvert \omega^{\wedge n} \rvert}{n!}$.

Assume that $\varphi : S^1 \times M \to M$ is a holomorphic Hamiltonian $S^1$-action. Thus, there exists a smooth function $H: M \to \RR$ such that $\varphi$ is the flow of its Hamiltonian vector field $X_H$. The "quantum counterpart" of $H$ is the Hermitian operator $\hat H_k \in \End(\mch_k)$ specified by\footnote{Since $X_H$ generates a holomorphic circle action, $\hat H_k$ preserves holomorphic sections.}
\begin{equation*} \hat H_k = \mcm_H -\frac i k \nabla^{\otimes k}_{X_H},\end{equation*}
where $\nabla : C^\infty(M, L) \to C^\infty(M, T^*M \otimes L)$ is the Chern connection, and $\mcm_H$ is the operator of multiplication by $H$.

Fix a regular value $E \in H(M)$, and consider the spectral projection $$\Pi_{k,E} = \mathbbm 1_{[E, \infty)}(\hat H_k).$$ The Schwartz kernel of $\Pi_{k,E}$, which we also denote by $\Pi_{k,E}$ (by an abuse of notation), is termed a \textit{partial Bergman kernel}\footnote{The projection $\Pi_{k,E}$ is a spectral projection, hence the partial Bergman kernel $\Pi_{k,E}$ is sometimes called a \textit{spectral} partial Bergman kernel.} (\cite{zz1, zz2, berman, coman_mari, pokorny_singer, ross_singer, finski1, finski2}). This terminology references the "full" Bergman kernel, which is the Schwartz kernel of the orthogonal projection $\Pi_k : L^2(M, L^{\otimes k}) \to \mch_k$.
The main goal of the present article is to study the asymptotic behaviour of the partial Bergman kernel $\Pi_{k,E}$  as $k \to \infty$. We focus on the behaviour away from the diagonal $\Delta_M\subset M \times M$, since the asymptotic properties of $\Pi_{k,E}$ on $\Delta_M$ are already understood quite well. Specifically, it is shown (among other things) in \cite{zz1, zz2} that 
\begin{equation}\label{zz_res} \Pi_{k,E}(z,z) = \left\{\begin{array}{ll} \Pi_k(z,z) + \mco(k^{-\infty}) & \text{if } H(z) > E, \\ \Pi_k(z,z)(\frac 1 2 + \mco(k^{-\frac 1 2})) & \text{if } H(z) = E,\\ \mco(k^{-\infty}) & \text{if } H(z) < E.\end{array}\right.\end{equation}
In this work, we formulate statements analogous to (\ref{zz_res}), but for $\Pi_{k,E}(z,w)$, where $z \ne w$.
Notably, $\Pi_{k,E}$ will be shown to "concentrate" about the set
\begin{equation*} \{(z, z) \ | \ H(z) \ge E\} \cup \{(z, \varphi(t,z)) \ | \ H(z) = E,\ t \in S^1\}\subset M \times M.\end{equation*}
It is instructive to compare this type of behaviour with that of the Bergman kernel $\Pi_k$ (or more generally, the kernels of Toeplitz operators\footnote{That is, operators of the form $\Pi_k \mcm_F : \mch_k \to \mch_k$, where $F \in C^\infty(M)$.}). The latter "concenrates" on $\Delta_M$, and additionally, it admits a full, uniform asymptotic expansion. We refer the reader to Sect. \ref{conc_rks} for some further details in this context. The properties of $\Pi_k$ have been studied thoroughly (e.g., \cite{mama, catlin, bbs, dlm, charles, zelditch}), and have found many applications in various fields of mathematics; they also underlie the results presented here.

Perhaps most notably, Theorem 4 of \cite{zz1} describes the leading-order scaling asymptotics of $\Pi_{k,E}(z,z)$ in a $\frac{C}{\sqrt k}$-neighborhood of $H^{-1}(E)$. The main result of the present paper (Theorem \ref{main_thm}) addresses the leading-order scaling asymptotics of $\Pi_{k,E}$ away from the diagonal.

Finally, we refer the reader to \cite{zz1} for a concise review of various properties of $S^1$-symmetric K\"{a}hler manifolds which are relevant in our context. There is some overlap between the present paper and \cite{zz1}; however, to the best of our knowledge, the off-diagonal asymptotic behaviour of partial Bergman kernels corresponding to spectral projections of quantum observables has yet to be described in the research literature. A treatment of the on-diagonal asymptotic properties of partial Bergman kernels of the same type considered here, but without an assumption of $S^1$-symmetry, may be found in \cite{zz2}. 
\subsection{Main results}\label{main_results_sec}
The local behaviour of $\Pi_{k,E}$ away from $H^{-1}(E) \times H^{-1}(E)$ is straightforward to derive (up to an error of order $\mco(k^{-\infty})$) using existing tools from the theory of Toeplitz operators (\cite{charles}), and does not require the assumption that $M$ is equipped with a holomorphic Hamiltonian $S^1$-action. In order to formulate the statements concisely, we use the following terminology (see also Sect. \ref{ms_sec}).
\begin{definition}[\cite{charles}] We say that a sequence $A_k \in \End(\mch_k)$ is negligible at $(z,w) \in M \times M$ if there exists a neighborhood $\mcn \subset M \times M$ of $(z,w)$ such that
\begin{equation*} \sup_{\mcn} \left|A_k \right| = \mco(k^{-\infty}).\end{equation*}
Here (by an abuse of notation), $A_k \in C^\infty\left(M \times M, L^{\otimes k} \boxtimes (L^*)^{\otimes k} \right)$ denotes the Schwartz kernel of $A_k \in \End(\mch_k)$, and its point-wise norm is defined using the Hermitian metric on $L^{\otimes k} \boxtimes (L^*)^{\otimes k}$ which is induced from that of $L$. \end{definition}
\begin{theorem}\label{non_equiv_thm} Let $F \in C^\infty(M)$, and denote $\hat F_k = \Pi_k \left(\mcm_F - \frac i k \nabla^{\otimes k}_{X_F}\right) \Pi_k$. Fix $E \in \left[\min F, \max F\right]$, not necessarily a regular value. Let $z, w\in M$. There exists a neighborhood $\mcn \subset M \times M$ of $(z,w)$ such that the following holds. 
\begin{enumerate}
\item{If $F(z) > E$ or $F(w) > E$, then $\mathbbm 1_{[E, \infty)}(\hat F_k) - \Pi_k = \mathbbm 1_{(-\infty, E)}(\hat F_k)$ is negligible at $(z,w)$.}
\item{If $F(z) < E$ or $F(w) < E$, then $\mathbbm 1_{[E, \infty)}(\hat F_k)$ is negligible at $(z,w)$.} \end{enumerate}
\end{theorem}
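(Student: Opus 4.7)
The plan is to reduce the off-diagonal statement to an on-diagonal one, and then apply the smooth functional calculus for Toeplitz operators developed in \cite{charles}. I carry out case (1) under the hypothesis $F(z) > E$; the complementary sub-case $F(w) > E$ is identical after swapping $z$ and $w$, and case (2) follows from the symmetric argument with inequalities reversed.

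Fix $\delta > 0$ with $F > E + \delta$ on an open neighborhood $U \ni z$, and choose a cutoff $\chi \in C^\infty_c(\RR)$ such that $0 \leq \chi \leq 1$, $\chi \equiv 1$ on $[\min F - 1, E]$, and $\chi \equiv 0$ on $[E + \delta/2, \infty)$. Then $\chi \geq \mathbbm 1_{(-\infty, E)}$ pointwise on the spectrum of $\hat F_k$ for all $k$ large enough, so the self-adjoint functional calculus yields the operator inequality
\begin{equation*} 0 \leq P \leq \chi(\hat F_k), \qquad P := \mathbbm 1_{(-\infty, E)}(\hat F_k). \end{equation*}
Since $\hat F_k$ is a self-adjoint Toeplitz operator with principal symbol $F$, the Toeplitz functional calculus of \cite{charles} shows that $\chi(\hat F_k)$ is again a Toeplitz operator whose Schwartz kernel admits a full asymptotic expansion along the diagonal; each coefficient at a point $z' \in M$ depends on $\chi$ only through the derivatives $\chi^{(m)}(F(z'))$, $m \geq 0$. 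For $z' \in U$ one has $F(z') > E + \delta$, where $\chi$ is identically zero, so every such coefficient vanishes and
\begin{equation*} 0 \leq P(z', z') \leq \chi(\hat F_k)(z', z') = \mco(k^{-\infty}) \end{equation*}
uniformly for $z' \in U$.

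To pass from the diagonal to off-diagonal points I use the reproducing-kernel-type inequality for orthogonal projections: from $P = P^* \circ P$ and the Cauchy--Schwarz inequality applied under the integral defining the composition, one obtains
\begin{equation*} |P(z', w')|^2 \leq P(z', z') \cdot P(w', w') \end{equation*}
pointwise on $M \times M$. The positivity of $\Pi_k - P = \mathbbm 1_{[E, \infty)}(\hat F_k)$ gives $P(w', w') \leq \Pi_k(w', w') = \mco(k^n)$ uniformly in $w' \in M$ by the standard diagonal Bergman expansion. Combining these bounds yields $|P(z', w')|^2 = \mco(k^{-\infty})$ uniformly on $U \times M$, which is the required negligibility of $P$ at $(z, w)$.

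The only non-trivial ingredient is the smooth Toeplitz functional calculus of \cite{charles}, which is tailored precisely to produce infinite-order vanishing of the on-diagonal Schwartz kernel of $\chi(\hat F_k)$ at points where $\chi \circ F$ is flat. I do not foresee obstacles beyond the careful bookkeeping of $\chi$, $\delta$, and neighborhoods; the reduction of off-diagonal estimates to on-diagonal ones for projections is a standard device that works uniformly in a product neighborhood of the form $U \times M$.
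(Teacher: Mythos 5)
Your proof is correct, and it shares the paper's key analytic input: Charles' smooth functional calculus, by which $\chi(\hat F_k)$ is a Toeplitz operator whose full symbol vanishes identically on $\{F > E+\delta/2\}$ because the symbol coefficients depend on $\chi$ only through its derivatives at $F(\cdot)$ --- this is exactly the content of Lemma \ref{smoothed_proj_ms}. Where you genuinely differ is the passage to off-diagonal points. The paper stays inside the microsupport calculus: writing $A_k = \mathbbm 1_{[E,\infty)}(\hat F_k)$, it uses $A_k = \hat\Psi_{k,\varepsilon} A_k = A_k \hat\Psi_{k,\varepsilon}$ and the composition rule for microsupports to obtain $\MS(A_k) \subset \{F\ge E-\varepsilon\}\times\{F\ge E-\varepsilon\}$, lets $\varepsilon \to 0$ (Corollary \ref{ms_cor}), and deduces item 1 from item 2 by replacing $F, E$ with $-F, -E$. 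You instead exploit positivity: you dominate $P=\mathbbm 1_{(-\infty,E)}(\hat F_k)$ by $\chi(\hat F_k)$, compare diagonal kernels, and upgrade diagonal to off-diagonal decay via the reproducing-kernel Cauchy--Schwarz inequality $|P(z',w')|^2 \le P(z',z')\,P(w',w')$ together with the crude bound $P(w',w') \le \Pi_k(w',w') = \mco(k^n)$. Your route buys a more elementary off-diagonal step (no kernel-composition estimates are needed) and gives negligibility on all of $U\times M$ at once; its cost is generality: it hinges on the operators being nonnegative and dominated by a nonnegative smoothed cutoff, so it does not by itself yield the product statement of Corollary \ref{two_proj_cor} or the closed microsupport bound $\{F\ge E\}\times\{F\ge E\}$ that the paper records. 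Two points you should make explicit, though neither is a gap: the diagonal comparison $P(z',z')\le \chi(\hat F_k)(z',z')$ follows from the coherent-state identity $A(z',z') = \langle A\,\xi_{z'},\xi_{z'}\rangle$ for operators on $\mch_k$; and the uniform bound $\chi(\hat F_k)(z',z') = \mco(k^{-\infty})$ on $U$ uses, besides the vanishing of the symbol coefficients near $U$, the off-diagonal decay of $\Pi_k$ (to handle the part of the multiplier supported away from $U$) and the fact that the $\mco(k^{-\infty})$ operator-norm remainder in the functional calculus gives an $\mco(k^{-\infty})$ pointwise kernel bound --- the same facts the paper invokes via (\ref{ms_formula}).
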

\begin{remark} In particular, if $z \ne w$ and $z \not \in F^{-1}(E)$ or $w \not \in F^{-1}(E)$, then $\mathbbm 1_{[E, \infty)}(\hat F_k)$ is negligible at $(z,w)$.\end{remark}
\begin{remark} In some cases (e.g., the settings of Lemma \ref{dist_positive}), the negligibility estimates provided by Theorem \ref{non_equiv_thm} can be improved, using suitable estimates of the Bergman kernel (\cite{mamae}), to exponential decay estimates (cf. \cite{zz1}, Theorem 3). Similarly, in some cases (e.g., the settings of Corollary \ref{dist_positive_equiv}), exponential decay estimates are valid for Schwartz kernels of orthogonal projections onto single eigenspaces (cf. \cite{zz1}, Theorem 1).\end{remark}
We note that Theorem \ref{non_equiv_thm} implies that the ranges of spectral projections corresponding to disjoint classical domains are "orthogonal up to a negligible error". More precisely,
\begin{corollary}\label{two_proj_cor} Let $F, G \in C^\infty(M)$. Assume that $E_1 \in \left(\min F, \max F\right)$, that $E_2 \in \left(\min G, \max G \right)$, and that $\{F \ge E_1\} \cap \{G \ge E_2\} = \emptyset$. Then
\begin{equation*} \Vert \mathbbm 1_{[E_1, \infty)}(\hat F_k) \mathbbm 1_{[E_2, \infty)}(\hat G_k)\Vert_{\op} = \mco(k^{-\infty}).\end{equation*} Consequently, the algebras generated by the pairs $\mathbbm 1_{[E_1, \infty)}(\hat F_k),\ \mathbbm 1_{[E_2, \infty)}(\hat G_k)$ are "asymptotically trivial" (cf. \cite{shabtai2}).\end{corollary}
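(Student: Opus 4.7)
The plan is to control $(P_k Q_k)(z,w)$ pointwise in $(z,w)$ and then pass to the operator norm via a Hilbert--Schmidt estimate. Set $P_k = \mathbbm 1_{[E_1,\infty)}(\hat F_k)$ and $Q_k = \mathbbm 1_{[E_2,\infty)}(\hat G_k)$. The disjointness hypothesis $\{F \ge E_1\} \cap \{G \ge E_2\} = \emptyset$ is equivalent to saying that the open sets $U_F = \{F < E_1\}$ and $U_G = \{G < E_2\}$ cover $M$, so the first step is to fix a smooth partition of unity $\rho_F + \rho_G \equiv 1$ subordinate to $\{U_F, U_G\}$ and insert it into the composition formula:
\begin{equation*}
(P_k Q_k)(z,w) = \int_M \rho_F(u)\, P_k(z,u)\, Q_k(u,w)\, d\mu(u) + \int_M \rho_G(u)\, P_k(z,u)\, Q_k(u,w)\, d\mu(u).
\end{equation*}

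For the first integral, every pair $(z,u) \in M \times \supp \rho_F$ satisfies $F(u) < E_1$, so part 2 of Theorem \ref{non_equiv_thm} applied to $P_k$ furnishes a neighborhood of $(z,u)$ on which $|P_k|$ is $\mco(k^{-\infty})$. Extracting a finite subcover of the compact set $M \times \supp \rho_F$ upgrades this to the uniform bound $\sup_{(z,u) \in M \times \supp \rho_F} |P_k(z,u)| = \mco(k^{-\infty})$. Combined with the polynomial estimate $|Q_k(u,w)| \le Q_k(u,u)^{1/2} Q_k(w,w)^{1/2} \le \Pi_k(u,u)^{1/2} \Pi_k(w,w)^{1/2} = \mco(k^n)$, which follows from $Q_k \le \Pi_k$, Cauchy--Schwarz on the spectral decomposition of $Q_k$, and the standard diagonal Bergman estimate, the first integral is bounded by $\mco(k^{-\infty}) \cdot \mco(k^n) \cdot \Vol(M) = \mco(k^{-\infty})$, uniformly in $(z,w)$. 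The second integral is handled symmetrically via the negligibility of $Q_k$ on $M \times \supp \rho_G$. Together, $\sup_{(z,w) \in M \times M} |(P_k Q_k)(z,w)| = \mco(k^{-\infty})$, whence
\begin{equation*}
\|P_k Q_k\|_{\op} \le \|P_k Q_k\|_{\mathrm{HS}} \le \Vol(M) \cdot \sup_{(z,w)} |(P_k Q_k)(z,w)| = \mco(k^{-\infty}).
\end{equation*}

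The asymptotic triviality of the algebras generated by the pair of projections then follows from this operator-norm estimate via the general framework of \cite{shabtai2}. The only non-routine step is the uniformity argument: Theorem \ref{non_equiv_thm} delivers negligibility only at a single point $(z,w)$, and a compactness argument on a product region is required to convert it into a bound that is valid uniformly across the integration variable before the Schur-type estimate above can be applied.
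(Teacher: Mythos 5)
Your argument is correct, but it takes a more hands-on route than the paper. The paper's proof stays entirely inside the microsupport calculus of \cite{charles}: it uses Corollary \ref{ms_cor} to place $\MS(\Pi_{k,1}) \subset \{F \ge E_1\}\times\{F\ge E_1\}$ and $\MS(\Pi_{k,2}) \subset \{G \ge E_2\}\times\{G \ge E_2\}$, then invokes the abstract composition rule for microsupports (\cite{charles}, p24) to conclude $\MS(\Pi_{k,1}\Pi_{k,2}) = \emptyset$, and finally cites \cite{charles}, Remark 5, to pass from a negligible kernel sequence to the $\mco(k^{-\infty})$ operator-norm bound. You instead reprove exactly those two black-box ingredients in this special case: your partition-of-unity decomposition of the composition integral, combined with the compactness argument upgrading the pointwise negligibility of Theorem \ref{non_equiv_thm} to a uniform bound on $M \times \supp\rho_F$ (resp.\ $\supp\rho_G \times M$ — note the order of factors for $Q_k(u,w)$, harmless since Theorem \ref{non_equiv_thm}(2) is insensitive to which coordinate lies in $\{G<E_2\}$), is precisely a direct proof of the microsupport composition property for this product, and your crude bound $|Q_k(u,w)| \le \Pi_k(u,u)^{1/2}\Pi_k(w,w)^{1/2} = \mco(k^n)$ together with the Hilbert--Schmidt estimate $\Vert\cdot\Vert_{\op} \le \Vert\cdot\Vert_{\mathrm{HS}}$ replaces Remark 5. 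What the paper's route buys is brevity and reuse of an established calculus; what yours buys is a self-contained argument whose only inputs are Theorem \ref{non_equiv_thm} and the on-diagonal Bergman asymptotics, with the uniformity (which is indeed the one non-routine point) made explicit rather than delegated to the cited machinery.
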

The main result of the present article describes the scaling asymptotics of $\Pi_{k,E}$ near $z_0, w_0 \in H^{-1}(E)$, $z_0 \ne w_0$. Throughout, we fix the normalization $\min H = 0$ (note that $H$ is defined up to a constant). There exists $N \ge 1$ and an open dense subset $M_N \subset M$ such that for all $z \in M_N$, the stabilizer group of $z$ is the finite subgroup $S^1_N \subset S^1$ of order $N$. Accordingly, the spectrum of $\hat H_k$ equals\footnote{The case $N >1$ is reduced, in Lemma \ref{N>1}, to the case $N = 1$, which is covered by \cite{guillemin_sternberg}.} $H(M) \cap \frac N k \ZZ$. Let $\{\psi_a\}_{a \in \RR}$ denote the gradient flow of $H$, and $\varphi_t(z) = \varphi(t,z)$ (where $t \in S^1 = \RR / 2\pi \ZZ$). 
\begin{theorem}\label{main_thm} Let $\mco_{z_0} \subset M$ denote the $S^1$-orbit of $z_0\in H^{-1}(E)$. Choose a local non-vanishing invariant section $s_{\inv}$ of $L$ in a neighborhood of $\mco_{z_0}$. Denote $$\Pi_{k,E}(z,w) = \mck_{k,E}(z,w) S_k(z,w),$$ where $S_k(z,w) = \sigma^{\otimes k}(z) \otimes (\sigma^*(w))^{\otimes k}$, with $\sigma = \frac{s_{\inv}}{\lvert s_{\inv}\rvert}$.
\begin{enumerate}
\item{If $w_0 \not \in \mco_{z_0}$, then $\Pi_{k,E}$ is negligible at $(z_0,w_0)$.}
\item{If $z_0 \in M_N$ and $t \in S^1$ satisfies $\varphi_{t}(z_0) \ne z_0$, then\begin{multline*} \left(\frac{2\pi} k \right)^n\mck_{k,E}\left(\psi_{\frac a {\sqrt k}}(z_0), \psi_{\frac b {\sqrt k}}\circ \varphi_t(z_0)\right) =\\  e^{-\frac{(a^2+b^2)\Vert X_H(z_0)\Vert^2}2} e^{-iN \lceil\frac{ k E}N \rceil  t}  \frac{N\left( 1-i\cot\left(\frac{N t} 2 \right) \right) } {2\Vert X_H(z_0) \Vert\sqrt{\pi k}} + C_k(a,b,t). \end{multline*}
Here,
\begin{equation*} C_k(a,b,t) = \frac{C_{k,1}(a,b,t)}k + \frac{C_{k,2}(a,b,t)}{k^{\frac 3 2}},\end{equation*}
where $C_{k,1}(a,b,t)$, $C_{k,2}(a,b,t)$ satisfy that for any bounded $B \subset \RR$ and closed interval $\mci \subset S^1 \setminus S^1_N$ there exist $c_{B,\mci,1}, c_{B,\mci,2} > 0$ such that $$|C_{k,1}(a,b,t)| \le (|a|+|b|)c_{B,\mci,1},\ |C_{k,2}(a,b,t)| \le c_{B,\mci,2}$$ for all $a, b \in B,\ t \in \mci$.}
\end{enumerate}
\end{theorem}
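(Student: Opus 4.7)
My plan is to combine the Fourier decomposition afforded by the $S^1$-symmetry with the near-diagonal scaling asymptotics of the full Bergman kernel. Since the spectrum of $\hat H_k$ lies in $\tfrac N k \ZZ$, the unitaries $U_k(t'):=\sum_j e^{-iNjt'}\Pi_{k,j}$ (where $\Pi_{k,j}$ is the projection onto the eigenspace with eigenvalue $\tfrac{Nj}{k}$) form a unitary representation of $S^1/S^1_N$ that quantizes $\varphi_{t'}$; their Schwartz kernels equal $\Pi_k(\varphi_{-t'}(z),w)$ up to an explicit parallel-transport phase on $L^{\otimes k}$. Fourier inversion and summation over $j\ge j_0:=\lceil kE/N\rceil$ yield
\[
\Pi_{k,E}\;=\;\frac{N}{2\pi}\int_0^{2\pi/N}D_{k,E}(t')\,U_k(t')\,dt',\qquad D_{k,E}(t'):=\sum_{j\ge j_0}e^{iNjt'},
\]
where $D_{k,E}$ is a distribution on $S^1/S^1_N$ whose restriction to the complement of $0$ coincides with $\tfrac{1}{2}e^{iNj_0 t'}\bigl(1+i\cot(Nt'/2)\bigr)$.

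For Part (1), if $w_0\notin\mco_{z_0}$ then $\varphi_{-t'}(z)\ne w$ uniformly for $t'\in S^1$ and $(z,w)$ in a small neighborhood of $(z_0,w_0)$, so $U_k(t')(z,w)$ is uniformly $\mathcal O(k^{-\infty})$ there; combined with the $k$-uniform distributional order of $D_{k,E}$ (once $e^{iNj_0 t'}$ is factored out), this yields the claimed negligibility. For Part (2), where $t\in S^1\setminus S^1_N$ is the parameter in the theorem, the equation $\varphi_{-t^*}(z_0)=\varphi_t(z_0)$ admits a unique solution $t^*\equiv -t\pmod{2\pi/N}$ in $(0,2\pi/N)$, and $t^*\ne 0$. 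I localize the $t'$-integral to a small neighborhood $I$ of $t^*$: outside $I$ the reasoning of Part (1) applies (crucially, $D_{k,E}$ is uniformly smooth on $S^1/S^1_N\setminus\{0\}$, which contains $\mci$). Inside $I$ I substitute $t'=t^*+s/\sqrt k$. Since the gradient flow $\psi$ and the Hamiltonian flow $\varphi$ commute (both preserve $H$), $\varphi_{-t^*}\circ\psi_{a/\sqrt k}(z_0)=\psi_{a/\sqrt k}(w_0)$, so both arguments of $\Pi_k$ lie within a $1/\sqrt k$-neighborhood of $w_0$ in the complex plane spanned by $\grad H(w_0)$ and $X_H(w_0)$, both of K\"{a}hler length $\|X_H(z_0)\|$ and $J$-related.

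Applying the standard off-diagonal Bergman expansion $\mck_k\sim(k/2\pi)^n e^{k\Phi}$ in K\"{a}hler normal coordinates at $w_0$, with $\Phi$ holomorphic in the first slot and anti-holomorphic in the second, the complex exponent becomes (to leading order) a quadratic form in $(a,b,s)$ whose real part is a definite quadratic in the transverse displacements and whose imaginary part provides cross-terms arising from both the (anti-)holomorphic structure of $\Pi_k$ and the parallel-transport phase on $U_k(t^*)$ along the orbit. Completing the square in $s$ then produces the symmetric Gaussian $e^{-(a^2+b^2)\|X_H(z_0)\|^2/2}$ --- the $-2ab$ cross term coming from $(a-b)^2$ is cancelled by the imaginary part contribution after Gaussian integration --- while the residual $s$-integration yields the expected $1/\sqrt k$ factor with coefficient proportional to $1/\|X_H(z_0)\|$. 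Multiplying by $\tfrac{N}{2\pi}D_{k,E}(t^*)$ and using $t^*\equiv -t\pmod{2\pi/N}$, which converts $e^{iNj_0 t^*}\mapsto e^{-iNj_0 t}$ and $\cot(Nt^*/2)\mapsto -\cot(Nt/2)$, gives exactly the claimed leading term. The principal obstacle will be proving the remainder estimates uniformly for $(a,b)\in B$ and $t\in\mci$: one must track the first subleading term of the near-diagonal Bergman expansion (yielding $C_{k,1}/k$ whose $(|a|+|b|)$-growth originates in the first derivative of the subprincipal amplitude along the transverse directions) and the next Gaussian-integration correction in $s$ (yielding $C_{k,2}/k^{3/2}$), while using the closed-ness of $\mci\subset S^1\setminus S^1_N$ both to keep $D_{k,E}$ bounded and to prevent coalescence of $t^*$ with its $S^1_N$-translates.
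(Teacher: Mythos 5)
Your decomposition is, once unwound, the same as the paper's: pairing the propagator kernel with the Dirichlet-type distribution $D_{k,E}=\sum_{j\ge j_0}e^{iNjt'}$ on $S^1/S^1_N$ is exactly the Cauchy--Szeg\"o/Hilbert-transform representation (\ref{gen_hilb_tr_repr}), (\ref{hilb_tr_form_ker}) (the delta, constant and principal-value $\cot$ parts of $D_{k,E}$ are the three terms there, and your kernel identity for $U_k(t')$ is Lemma \ref{shift_prop_ker}); Part 1 and the localization away from $t^*$ are Lemma \ref{dist_positive} and Lemma \ref{on_orbit} (note that pairing with the order-one singularity of $D_{k,E}$ at $t'=0$ requires not only (\ref{berg_sup_est}) but also the derivative bound (\ref{berg_der_sup_est}), which is what the mean-value-theorem step in the paper supplies); and the $1/\sqrt k$-scale Gaussian analysis near $t^*\equiv -t$ is Corollary \ref{stationary_phase_propagator}. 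Your bookkeeping of the leading constant and of the two remainder orders agrees with the statement.

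One step, as written, would fail and must be repaired: you cannot ``multiply by $\frac N{2\pi}D_{k,E}(t^*)$''. In the invariant frame the phase of the parallel-transported propagator kernel has $t'$-derivative $\approx kE$ at $t^*$ (this is $\partial_t g_{z_0}=H(z_0)=E$ in Lemma \ref{non_deg_lemma}); it is precisely the rapidly oscillating factor $e^{iNj_0t'}$, with $Nj_0\approx kE$, sitting inside $D_{k,E}$ that cancels it and makes $t^*$ a stationary point. If you freeze the full $D_{k,E}$ at $t^*$, the remaining $s$-integral carries an uncancelled linear phase of size $\sqrt k\,E\,s$ and is $\mco(k^{-\infty})$ after integration by parts, so the claimed Gaussian would not appear. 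The correct move is to split $D_{k,E}(t')=e^{iNj_0t'}\cdot\frac12\left(1+i\cot\left(\frac{Nt'}2\right)\right)$ near $t^*$, keep $e^{iNj_0t'}$ inside the phase of the stationary-phase integral, and freeze only the smooth cotangent factor at $t^*$ (an error of order $k^{-1/2}$, absorbed into the remainder). Relatedly, the assertion that the exponent is ``to leading order a quadratic form in $(a,b,s)$'' whose $ab$ cross-terms cancel is the main analytic content and cannot be left asserted: it requires computing the first and second derivatives of the phase at the critical point (in the paper, via the one-form $\alpha_\mce$ of (\ref{alpha_E_def}) and its Lie derivatives on the diagonal, Lemma \ref{non_deg_lemma}, followed by the $f^0$ computation with complex stationary phase in Corollary \ref{stationary_phase_propagator}); without this the specific factor $e^{-\frac{(a^2+b^2)\Vert X_H(z_0)\Vert^2}2}$, the absence of an $ab$ term, and the uniform $(|a|+|b|)k^{-1}$ and $k^{-3/2}$ remainder structure are not established.
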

Let $V_k(t) = e^{i k t \hat H_k}$ denote the quantum evolution defined by $\hat H_k$. The proof of Theorem \ref{main_thm} is essentially an adaptation of the arguments used in \cite{shabtai}; it relies on basic methods from the theory of Fourier series, applied in the context of the unitary representation $V_k : S^1 \to \End(\mch_k)$. A key role is played by the \textit{periodic Hilbert transform} (\cite{hilbert}), which is a classical singular integral operator on $L^2(S^1)$. We also use an estimate of the Schwartz kernels of projections onto single eigenspaces of $\hat H_k$ (the so-called \textit{equivariant Bergman kernels}). The estimate is obtained by essentially the same method that was used in \cite{zz1} to describe the on-diagonal scaling asymptotics of equivariant Bergman kernels.

\begin{theorem}\label{aux_thm} Let $\lambda_{k}$ be an eigenvalue of $\hat H_k$ such that $\left|\lambda_k - E \right| = \mco(k^{-1})$. Let $\Pi^{\equi}_{k, \lambda_k}$ be the orthogonal projection onto the corresponding eigenspace of $\hat H_k$. Fix $z_0, w_0 \in M$, and denote
\begin{equation*} \Pi_{k,\lambda_k}^{\equi}(z, w) = \mck^{\equi}_{k, \lambda_k}(z,w) S_k(z,w),\end{equation*}
where $S_k(z,w)$ is as specified in Theorem \ref{main_thm}.
\begin{enumerate}
\item{If $H(z_0) \ne E$ or $H(w_0) \ne E$ or $w_0 \not \in \mco_{z_0}$, then $\Pi^{\equi}_{k, \lambda_k}$ is negligible at $(z_0,w_0)$.}
\item{If $z_0 \in M_N$, then
\begin{multline*} \left(\frac{2\pi} k \right)^n \mck^{\equi}_{k, \lambda_k}\left(\psi_{\frac a {\sqrt k}}(z_0),\psi_{\frac b {\sqrt k}}\circ \varphi_t(z_0)\right) = \\  e^{-\frac{(a^2+b^2)\Vert X_H(z_0)\Vert^2}2}e^{-i k\lambda_k  t}\frac N{\Vert X_H(z_0)\Vert\sqrt{\pi k} }+ C^{\equi}_k(a,b,t).\end{multline*}
Here,
\begin{equation*} C^{\equi}_k(a,b,t) = \frac{C^{\equi}_{k,1}(a,b,t)}k + \frac{C^{\equi}_{k,2}(a,b,t)}{k^{\frac 3 2}},\end{equation*}
where $C^{\equi}_{k,1}(a,b,t)$, $C^{\equi}_{k,2}(a,b,t)$ satisfy that for any bounded $B \subset \RR$ and closed interval $\mci \subset S^1$ there exist $c^{\equi}_{B,\mci,1}, c^{\equi}_{B,\mci,2} > 0$ such that $$|C^{\equi}_{k,1}(a,b,t)| \le (|a|+|b|)c^{\equi}_{B,\mci,1},\ |C^{\equi}_{k,2}(a,b,t)| \le c^{\equi}_{B,\mci,2}$$ for all $a, b \in B,\ t \in \mci$.}
\end{enumerate} \end{theorem}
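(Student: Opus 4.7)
The plan is to use the Fourier representation
\begin{equation*} \Pi^{\equi}_{k,\lambda_k} = \frac{1}{2\pi}\int_{0}^{2\pi} e^{-ikt\lambda_k}\, V_k(t)\, dt, \end{equation*}
which is valid since $k\lambda_k \in N\ZZ$ renders the integrand $2\pi$-periodic in $t$. The Schwartz kernel of $V_k(t) = e^{ikt\hat H_k}$ is, up to a Hamiltonian phase factor, that of the Bergman kernel composed with the holomorphic lift of $\varphi_t$ to $L^{\otimes k}$; in particular it inherits the off-diagonal decay of $\Pi_k$ and is concentrated on the graph $\{(z, \varphi_t(z))\}$.

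For Part 1 I would split into two cases. If $w_0 \notin \mco_{z_0}$, then by compactness there exist a neighborhood $\mcn$ of $(z_0, w_0)$ and $\delta > 0$ with $\dist(w, \varphi_t(z)) > \delta$ for all $(z,w) \in \mcn$ and $t \in S^1$; the off-diagonal decay estimates of the Bergman kernel (\cite{mamae, charles}) then give $V_k(t)(z,w) = \mco(k^{-\infty})$ uniformly in $t$, which is preserved by integration. If $w_0 \in \mco_{z_0}$ but $H(z_0)\ne E$ (the case $H(w_0)\ne E$ being symmetric, since $H(z_0)=H(w_0)$), then away from the finite set of times $t_0$ satisfying $\varphi_{t_0}(z_0)=w_0$ the same decay argument applies, while near each such $t_0$ the local expansion of $V_k(t)(z,w)$ has a leading phase $e^{iktH(z)}$. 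Combined with $e^{-ikt\lambda_k}$ this yields an oscillatory integral in $t$ with phase derivative bounded below by $|H(z_0)-E|>0$ for large $k$, so repeated integration by parts delivers $\mco(k^{-\infty})$.

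For Part 2 I would substitute the off-diagonal scaling expansion of $\Pi_k$ (for arguments at separation $\mco(k^{-1/2})$) into $V_k(s)(z,w)$ at the test points $(z,w) = (\psi_{a/\sqrt k}(z_0),\, \psi_{b/\sqrt k}\varphi_t(z_0))$ and apply stationary phase in the variable $s$. The critical values are those $s$ for which $\varphi_s(z_0) = \varphi_t(z_0)$, i.e.\ $s - t$ lies in the stabilizer of $z_0$; since $z_0 \in M_N$, this set has cardinality $N$, and each point is a non-degenerate critical point whose Hessian in $s$ is governed by $\Vert X_H(z_0)\Vert^2$. A Laplace expansion at each critical point, together with the identity $e^{-iks\lambda_k} = e^{-ikt\lambda_k}$ at these values (because $k\lambda_k \in N\ZZ$), produces the claimed leading factor $N e^{-ikt\lambda_k}/(\Vert X_H(z_0)\Vert\sqrt{\pi k})$; the Gaussian factor $e^{-(a^2+b^2)\Vert X_H(z_0)\Vert^2/2}$ emerges from the transverse directions along the gradient flow $\psi_s$.

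The main obstacle is extracting the precise remainder structure $C^{\equi}_k = C^{\equi}_{k,1}/k + C^{\equi}_{k,2}/k^{3/2}$ with $|C^{\equi}_{k,1}| \le (|a|+|b|)\, c^{\equi}_{B,\mci,1}$, uniformly for $a, b \in B$ and $t \in \mci$. This requires tracking the first sub-leading term in the off-diagonal Bergman expansion and its interaction with the $k^{-1/2}$ scaling; the $(|a|+|b|)$-linear growth at order $k^{-1}$ reflects the odd-parity character of the first Bergman correction in the scaled coordinates, which vanishes at $(a,b) = (0,0)$. Uniformity in $t$ across $\mci$ (which in this statement need not avoid $S^1_N$, since at $t \in S^1_N$ one recovers the on-diagonal result of \cite{zz1}) demands locally uniform control of the Hessian and amplitude in the stationary-phase estimates.
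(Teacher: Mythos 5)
Your backbone coincides with the paper's: represent $\Pi^{\equi}_{k,\lambda_k}$ as the Fourier coefficient $\frac 1{2\pi}\int_0^{2\pi}e^{-ik\lambda_k t}V_k(t)\,dt$, use the shift property $V_k(t)(z,w)=e^{iktH(z)}\mct_{k,t}\Pi_k(\varphi_t(z),w)$ together with the off-diagonal decay of $\Pi_k$ to localize, and then apply complex stationary phase (Laplace) at the critical times. Two of your choices genuinely differ from the paper and are both viable: for the case $w_0\in\mco_{z_0}$ but $H(z_0)=H(w_0)\ne E$ you argue by non-stationary phase in $t$ (the phase derivative at the would-be critical time is $H(z_0)-\lambda_k$, bounded away from $0$, consistent with the paper's Lemma \ref{non_deg_lemma}), whereas the paper instead uses the functional-calculus/microsupport argument of Lemma \ref{eig_space_proj_MS}, which has the advantage of not using the $S^1$-symmetry at all; and you treat general $N$ by summing over the $N$ critical times $s\in t+S^1_N$ (legitimate, since $k\lambda_k\in N\ZZ$ makes the integrand $\frac{2\pi}N$-periodic), whereas the paper reduces to $N=1$ by Lemma \ref{N>1}. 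These are cosmetic differences in an otherwise identical strategy.

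The genuine gap is that the quantitative core of the statement is asserted rather than derived, and the two heuristics you offer in its place are not the actual mechanisms. First, the Gaussian factor does not come from "the transverse directions along the gradient flow" alone: you must compute the full complex Hessian of the phase at the critical point, including the cross derivatives in $t$ and the gradient-flow parameters ($\partial_t\partial_a g=\partial_t\partial_b g=\frac 12\Vert X_H\Vert^2$ in Lemma \ref{non_deg_lemma}, obtained from the one-form $\alpha_{\mce}$ with $\tilde\nabla\mce=-i\alpha_{\mce}\otimes\mce$). It is precisely these cross terms, through the displacement of the critical point, that convert the naive transverse coefficient $\frac i2\Vert X_H\Vert^2$ into $i\Vert X_H\Vert^2$ in the critical value and cancel the $ab$ cross term; without them one would get $e^{-(a^2+b^2)\Vert X_H\Vert^2/4}$ times a spurious cross factor, not the stated Gaussian. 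Second, the bound $|C^{\equi}_{k,1}|\le(|a|+|b|)c^{\equi}_{B,\mci,1}$ is not explained by any "odd-parity" property of the first Bergman correction: in the paper it comes from Taylor-expanding, in $a_k=a/\sqrt k$ and $b_k=b/\sqrt k$, the amplitude at the critical point, the Hessian prefactor, and the critical value of the phase, each first-order deviation being $\mco((|a|+|b|)k^{-1/2})$ against the $k^{-1/2}$ leading factor, with uniformity in $(a,b,t)$ supplied by the complex stationary phase theorem (H\"ormander, Theorem 7.7.12). Until you carry out these computations (the analogues of Lemma \ref{non_deg_lemma} and Corollary \ref{stationary_phase_propagator}), the constants in the leading term and the structure of the remainder are unproven.
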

\section{Rotation of the two-dimensional sphere}
In this section, we illustrate the results of Sect. \ref{main_results_sec} in the example of the action of $S^1$, by rotations, on the two-dimensional sphere. The latter is identified (via the stereographic projection from the north pole) with the complex projective line $\CC P^1$ and equipped with the Fubini-Study form such that the total area equals $2\pi$. For conciseness, we focus on the case $N = 1$, and $a= b = 0$.

The Hamiltonian vector field of the function
\begin{equation*} H([z]) = \frac{|z_0|^2}{|z_0|^2+|z_1|^2},\ z = (z_0, z_1) \in \CC^2 \setminus \{0\},\end{equation*}
is specified (on  $\{[z_0 : z_1] \in \CC P^1 \ | \ z_1 \ne 0\}$) by
\begin{equation*} X_H([\zeta : 1]) = i\left(\zeta \partial_\zeta - \bar \zeta \partial_{\bar \zeta} \right), \end{equation*}
and it generates the Hamiltonian flow
\begin{equation*} \varphi_t([z]) = [U_t z],\ U_t = \left(\begin{array}{cc} e^{i t} & 0 \\ 0 & 1 \end{array}\right). \end{equation*}
Clearly, $\varphi : S^1 \times \CC P^1 \to \CC P^1$ is a holomorphic circle action.
\begin{remark} On $S^2 = \{x \in \RR^3 \ | \ |x| = 1\}$, it holds that $$H(x_1,x_2,x_3) = \frac 1 2 (x_3 + 1),$$ and $\varphi_t$ is the rotation by angle $t$ about the $x_3$ axis.\end{remark}
Let $L = \mco(1)$ be the dual of the tautological line bundle on $\CC P^1$. Then $L$ is a prequantum line bundle, and we let $\mch_k = H^0(\CC P^1, L^{\otimes k})$ be the space of holomorphic sections of $L^{\otimes k}$.
%The elements of $\mch_k$ are of the form
%\begin{equation*} s([z]) = P(z) f_z^{\otimes k},\ z \in \CC^2 \setminus \{0\},\end{equation*}
%where $P$ is a bivariate homogeneous polynomial of degree $k$, and $f_z \in (\CC z)^*$ is the linear functional specified by $f_z(z) = 1$ (i.e., $\{f_z\}$ is the basis of $(\CC z)^*$ which is dual to the basis $\{z\}$ of $\CC z$). %The elements of $\mch_k$ are of the form
%\begin{equation*} s([z]) = 
The quantum counterpart of $H$ is given by (\ref{corrected_BT})
\begin{equation*} \hat H_k = \Pi_k \mcm_{H - \frac 1 {2k} \Delta H} \Pi_k,\end{equation*}
and in our case, $\Delta H = - 4 H+2$, which means that $$\hat H_k = \frac{k+2}k \Pi_k \mcm_{H} \Pi_k-\frac 1 k \Id_{\mch_k}.$$

As is common, we identify $\mch_k$ with the space of bivariate homogeneous polynomials of degree $k$, denoted by $\CC_k[z_0, z_1]$, and equipped with a suitable inner product so that the set $ \{s_{k,l} \ | \ l = 0,1,...,k\}$, where
\begin{equation}\label{ONB} s_{k,l} = \sqrt{\frac{(k+1)\binom{k}{l}}{2\pi}} z_0^l z_1^{k-l},\end{equation}
forms an orthonormal basis of $\mch_k$.
In fact, this is an eigenbasis of $\hat H_k$ (cf. \cite{lefloch}, Example 5.2.4), with $$\hat H_k(s_{k,l}) =  \lambda_{k,l} s_{k,l},\ \lambda_{k,l} = \frac l k.$$
In particular, $s_{\inv} = s_{1,0}$ is an invariant section of $L \to \CC P^1$.
 
We can estimate the leading order of $s_{k,l}$ using only basic tools, as follows. 
Note that $H([z]) = E$ if and only if $[z] = \left[\sqrt{\frac E {1-E}} e^{i \theta} : 1 \right]$ for some $\theta \in [0,2\pi]$.
\begin{lemma}\label{stirling_est} As in Theorems \ref{main_thm}, \ref{aux_thm}, consider the local normalized section $\sigma = \frac{s_{\inv}}{|s_{\inv}|}$ on $\{[z_0 : z_1] \in \CC P^1\ | \ z_1 \ne 0\}$, and write
\begin{equation*} s_{k,l}([z]) = \kappa_{k,l}([z]) \sigma([z])^{\otimes k}.\end{equation*} Let $E \in (0,1)$ and $\left| \frac{l_k} k - E \right| = \mco(k^{-1})$. Then, applying Stirling's approximation formula, we see that
\begin{equation*}\kappa_{k,l_k}([z]) = \left\{\begin{array}{ll} \mco(k^{-\infty}) & \text{if } H([z]) \ne E,\\ \frac{k^{\frac 1 4}}{(2\pi)^{\frac 3 4}} \frac { e^{i l_k \theta}} {( E(1-E))^{\frac 1 4}}+ \mco(k^{-\frac 3 4}) & \text{if } [z] = \left[\sqrt{\frac E {1-E}} e^{i \theta} : 1\right]. \end{array}\right.\end{equation*}\end{lemma}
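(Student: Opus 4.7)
The plan is to compute $\kappa_{k,l}$ in closed form in the chart $\{z_1\ne 0\}$ and then apply Stirling's formula. Use the coordinate $\zeta=z_0/z_1$ and the holomorphic frame $e=z_1$ for $\mco(1)$. The Fubini--Study metric normalized so that $\Vol(\CC P^1)=2\pi$ satisfies $|e|_h^2=(1+|\zeta|^2)^{-1}$, and by (\ref{ONB}) $s_{\inv}=s_{1,0}=\pi^{-\frac 1 2}z_1$ is a positive real multiple of $e$, so the unit section is $\sigma=(1+|\zeta|^2)^{\frac 1 2}\,e$. Expressing $s_{k,l}$ in the $e^{\otimes k}$ frame via (\ref{ONB}) yields
\begin{equation*} \kappa_{k,l}([\zeta:1])=\sqrt{\frac{(k+1)\binom{k}{l}}{2\pi}}\,\frac{\zeta^l}{(1+|\zeta|^2)^{\frac k 2}}. \end{equation*}
Using the identity $H([\zeta:1])=|\zeta|^2/(1+|\zeta|^2)$ one then rewrites the modulus as
\begin{equation*} |\kappa_{k,l}([z])|^2=\frac{(k+1)\binom{k}{l}}{2\pi}\,H([z])^l(1-H([z]))^{k-l}, \end{equation*}
with all of the phase coming from the factor $\zeta^l=r^l e^{il\theta}$.

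Next I would apply Stirling's formula in the sharp form $n!=\sqrt{2\pi n}(n/e)^n(1+\mco(1/n))$. With $p=l_k/k$ and $q=H([z])$, this gives
\begin{equation*} \binom{k}{l_k}q^{l_k}(1-q)^{k-l_k}=\frac{e^{-k D(p\Vert q)}}{\sqrt{2\pi k\,p(1-p)}}\bigl(1+\mco(1/k)\bigr), \end{equation*}
where $D(p\Vert q)=p\log(p/q)+(1-p)\log((1-p)/(1-q))$ is the Kullback--Leibler divergence, nonnegative, vanishing only at $p=q$, with the Taylor expansion $D(p\Vert q)=(p-q)^2/(2q(1-q))+\mco((p-q)^3)$.

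Two cases then arise. If $H([z])\ne E$ then $|p-q|$ is bounded below by a positive constant for large $k$, so $D(p\Vert q)\ge c>0$, and $|\kappa_{k,l_k}([z])|$ is exponentially small, which gives the $\mco(k^{-\infty})$ claim. If $H([z])=E$ then $p-q=\mco(1/k)$ by hypothesis on $l_k$, so $D(p\Vert q)=\mco(1/k^2)$, $e^{-kD(p\Vert q)}=1+\mco(1/k)$, and $p(1-p)=E(1-E)+\mco(1/k)$. Substituting,
\begin{equation*} |\kappa_{k,l_k}([z])|=\frac{k^{\frac 1 4}}{(2\pi)^{\frac 3 4}(E(1-E))^{\frac 1 4}}\bigl(1+\mco(1/k)\bigr); \end{equation*}
restoring the phase $e^{il_k\theta}$ (valid because the prefactor $\sqrt{(k+1)\binom{k}{l_k}/(2\pi)}$ and $r^{l_k}/(1+r^2)^{k/2}=E^{l_k/2}(1-E)^{(k-l_k)/2}$ are real and positive on $H^{-1}(E)$) yields the claim with absolute error $\mco(k^{-\frac 3 4})$.

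The main obstacle is not conceptual but a matter of careful bookkeeping: to reach the stated $\mco(k^{-\frac 3 4})$ error one must use the $1+\mco(1/k)$ relative-error form of Stirling rather than just its leading asymptotic, and crucially invoke the quadratic vanishing of $D(p\Vert q)$ at $p=q$ so that the $\mco(1/k)$ deviation of $l_k/k$ from $E$ contributes only to the multiplicative error and not to the exponent.
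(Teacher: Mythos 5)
Your proposal is correct and follows essentially the same route as the paper: compute $\kappa_{k,l}$ explicitly in the chart, apply Stirling's formula to $\binom{k}{l_k}$, get exponential smallness off $H^{-1}(E)$ from a $k$-th power of a quantity strictly less than $1$, and show the $\mco(k^{-1})$ deviation of $l_k/k$ from $E$ only perturbs the answer by a relative $1+\mco(k^{-1})$. Your Kullback--Leibler packaging (positivity of $D(p\Vert q)$ off the level set, quadratic vanishing at $p=q$) is just a tidier bookkeeping of the paper's $a(E,\zeta)<1$ and $b_k=1+\mco(k^{-1})$ steps, so there is nothing substantive to flag.
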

\begin{proof}
Stirling's approximation formula produces
\begin{equation*} {k \choose l_k} = \frac 1 {\sqrt{2\pi}} \sqrt{\frac k {l_k(k-l_k)}} \frac{k^k}{l_k^{l_k}(k-l_k)^{k-l_k}}(1 + \mco(k^{-1})).\end{equation*}
Thus, the pointwise norm of $s_{k,l_k}$ is given by
\begin{multline*} \big{|} s_{k,l_k}([\zeta:1])\big{|}_{[\zeta:1]}^2 =\\ \frac{k+1}{(2\pi)^{\frac 3 2}} \sqrt{\frac k {l_k(k-l_k)}} \left(\left(\left(\frac k {l_k} - 1 \right)|\zeta|^2 \right)^{\frac{l_k} k} \frac 1 {1- \frac{l_k} k } \frac 1 {1+|\zeta|^2} \right)^k(1+\mco(k^{-1})).\end{multline*}
Now, note that
\begin{multline*} \left(\left(\frac k {l_k} - 1 \right) |\zeta|^2 \right)^{\frac{l_k} k} \frac 1 {1- \frac{l_k} k } \frac 1 {1+|\zeta|^2} =\\ \left(\left(\frac 1 E - 1 \right) |\zeta|^2\right)^E \frac 1 {1-E} \frac 1 {1+|\zeta|^2} + \mco(k^{-1}).\end{multline*}
We readily verify that
\begin{equation*} a(E, \zeta) = \left(\left(\frac 1 E - 1 \right) |\zeta|^2 \right)^E \frac 1 {1-E} \frac 1 {1+|\zeta|^2} = 1\end{equation*}
if and only if $|\zeta|^2 = \frac{E}{1-E}$, that is, if and only if $H([\zeta:1]) = E$, and otherwise $0 \le a(E,\zeta) < 1$. Thus, $$|\kappa_{k, l_k}([z])| = \big{|} s_{k,l_k}([\zeta:1])\big{|}_{[\zeta:1]} = \mco(k^{-\infty})$$ whenever $H([z]) \ne E$.

Assume that $H([\zeta : 1]) = E$, so that $\zeta = \sqrt{\frac{E}{1-E}} e^{i \theta}$ for some $\theta \in [0,2\pi]$. Then
\begin{equation*}\kappa_{k,l_k}([\zeta:1]) = \sqrt{\frac{k+1}{2\pi}} \sqrt{{k \choose l_k} E^{l_k} (1-E)^{k-l_k}} e^{i l_k \theta} ,\end{equation*}
where
\begin{multline*} {k \choose l_k} E^{l_k} (1-E)^{k-l_k} =\\ \frac 1 {\sqrt{2\pi}} \sqrt{\frac k {l_k(k-l_k)}} \left(\frac {kE} {l_k}  \right)^{l_k} \left(\frac {k(1-E)} {k-l_k} \right)^{k-l_k}\left(1+ \mco(k^{-1})\right),\end{multline*}
and it is readily verified that
\begin{equation*} \sqrt{\frac k {l_k(k-l_k)}} = \frac {1} {\sqrt{k E(1-E)}} + \mco(k^{-\frac 3 2}).\end{equation*}
Writing $\frac {l_k} k = E + \frac{c_k} k$ with $c_k = \mco(1)$, we obtain
\begin{equation*} b_k = \left(\frac k {l_k} E \right)^{l_k} \left(\frac k {k-l_k} (1-E) \right)^{k-l_k} = \left(1 - \frac{c_k}{l_k} \right)^{l_k} \left(1 + \frac{c_k}{k-l_k} \right)^{k- l_k}.\end{equation*}
If $c_k = 0$, then $b_k = 1$. If $c_k \ne 0$, noting that $\left|e - \left(1 + \frac 1 x \right)^x \right| = \mco(x^{-1})$ as $x \to \infty$, we conclude that $b_k = 1 + \mco(k^{-1})$. Thus,
\begin{equation*} {k \choose l_k} E^{l_k}(1-E)^{k-l_k} = \frac 1 {\sqrt{2\pi}} \frac {1} {\sqrt{kE(1-E)}}(1 + \mco(k^{-1})),\end{equation*}
which implies that
\begin{equation*} \kappa_{k,l_k}([z]) = \frac{k^{\frac 1 4}}{(2\pi)^{\frac 3 4}} \frac { e^{i l_k \theta}} {( E(1-E))^{\frac 1 4}}  + \mco(k^{-\frac 3 4}).\end{equation*} \end{proof}
The equivariant Bergman kernel corresponding to the eigenvalue $\frac{l} k$ of $\hat H_k$ is specified by 
\begin{equation}\label{equiv_CP1} \Pi^{\equi}_{k, l}([z], [w]) = s_{k,l}([z]) \otimes s_{k,l}^*([w]).\end{equation} Hence, Lemma \ref{stirling_est} provides an estimate of $\Pi^{\equi}_{k,l}$ which can be used to verify Theorem \ref{aux_thm} (one could similarly verify the cases $N > 1$ or $a \ne 0$ or $b \ne 0$). 
\begin{corollary} As in Theorem \ref{aux_thm}, let $S_k([z],[w]) = \sigma([z])^{\otimes k} \otimes (\sigma^*([w]))^{\otimes k}$, and write $$\Pi^{\equi}_{k,l_k}([z], [w]) = \mck_{k,l_k}^{\equi}([z],[w]) S_k([z],[w]).$$
If we substitute the estimate of Lemma \ref{stirling_est} in (\ref{equiv_CP1}), then
\begin{gather*} \mck^{\equi}_{k,l_k}([z],[w]) =\\ \left\{\begin{array}{ll} \mco(k^{-\infty}) & \text{if } H([z]) \ne E \text{ or } H([w]) \ne E,\\ \frac{k^{\frac 1 2}}{(2\pi)^{\frac 3 2}} \frac{e^{- i l_k t_0}}{\sqrt{E(1-E)}} + \mco(k^{-\frac 1 2}) & \text{if } [z] = \left[\sqrt{\frac E {1-E}} e^{i \theta} : 1 \right],\ [w] = [U_{t_0} z]. \end{array}\right. \end{gather*}
Clearly, the case $H([z]) \ne E$ or $H([w]) \ne E$ is in accordance with Theorem \ref{aux_thm}. Otherwise, $\zeta = \sqrt{\frac E {1-E}} e^{i \theta}$ for some $\theta \in [0,2\pi]$, hence 
\begin{equation*}\Vert X_H([\zeta : 1])\Vert^2 = \frac{2|\zeta|^2}{(1+|\zeta|^2)^2} = 2 E(1-E).\end{equation*} Thus, Theorem \ref{aux_thm} produces the estimate
\begin{equation*} \mck^{\equi}_{k, l_k}([z],[U_{t_0}z]) = \frac k {2\pi} \frac{e^{-i l_k t_0}}{\sqrt{2 E(1-E)} \sqrt{\pi k}}  + \mco(k^{-\frac 1 2}),\end{equation*}
which, as expected, is identical to the estimate coming from Lemma \ref{stirling_est}.\end{corollary}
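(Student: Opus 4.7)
The plan is to proceed by direct computation; the statement is essentially a verification. The two things to check are (i) that substituting the pointwise estimate of Lemma~\ref{stirling_est} into the rank-one kernel formula (\ref{equiv_CP1}) yields the displayed expression, and (ii) that this expression coincides with what Theorem~\ref{aux_thm} predicts in the case $n = N = 1$, $a = b = 0$.

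For the substitution, I would first use the decompositions $s_{k,l_k}([z]) = \kappa_{k,l_k}([z])\,\sigma([z])^{\otimes k}$ together with the induced $s_{k,l_k}^*([w]) = \overline{\kappa_{k,l_k}([w])}\,(\sigma^*([w]))^{\otimes k}$, which turn (\ref{equiv_CP1}) into $\mck^{\equi}_{k,l_k}([z],[w]) = \kappa_{k,l_k}([z])\,\overline{\kappa_{k,l_k}([w])}$. If either $H([z]) \ne E$ or $H([w]) \ne E$, Lemma~\ref{stirling_est} bounds one of the factors by $\mco(k^{-\infty})$ while the other grows at most as $\mco(k^{1/4})$, so the product is $\mco(k^{-\infty})$. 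Otherwise $H([z]) = H([w]) = E$, and since $\varphi_t([\zeta:1]) = [e^{it}\zeta:1]$, the condition $[w] \in \mco_{[z]}$ forces $[z] = [\sqrt{E/(1-E)}\,e^{i\theta}:1]$ and $[w] = [U_{t_0}z]$, whose affine coordinate has argument $\theta + t_0$. Plugging the second case of Lemma~\ref{stirling_est} in at both points, the $\theta$-dependent phases cancel to leave the factor $e^{-il_k t_0}$, and multiplying the two moduli $k^{1/4}/((2\pi)^{3/4}(E(1-E))^{1/4})$ produces the stated leading-order term.

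For the consistency check, the only nontrivial input is $\Vert X_H([\zeta:1])\Vert^2$ on the level set $H^{-1}(E)$. Using the expression $X_H = i(\zeta\partial_\zeta - \bar\zeta\partial_{\bar\zeta})$ and the Fubini--Study Hermitian norm (normalized so that the total area is $2\pi$), a short computation gives $\Vert X_H\Vert^2 = 2|\zeta|^2/(1+|\zeta|^2)^2$, which on $H^{-1}(E)$ reduces to $2E(1-E)$ since there $|\zeta|^2 = E/(1-E)$. Substituting this, together with $n = N = 1$, $a = b = 0$, and $\lambda_k = l_k/k$, into the formula of Theorem~\ref{aux_thm} and simplifying via $(2\pi)^{3/2} = 2\sqrt{2}\,\pi^{3/2}$ recovers the expression obtained from Lemma~\ref{stirling_est}.

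No conceptual step here is difficult: the corollary is purely a sanity check that the general asymptotic of Theorem~\ref{aux_thm} and the elementary Stirling-based estimate of Lemma~\ref{stirling_est} agree on $\CC P^1$. The only place one can slip is the bookkeeping of the $k$-powers and of the constant factors of $\pi$ and $\sqrt{2}$, so the main obstacle is ensuring these line up exactly on both sides.
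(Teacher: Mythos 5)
Your proposal is correct and follows essentially the same route as the paper: substitute the pointwise estimate of Lemma \ref{stirling_est} into the rank-one formula (\ref{equiv_CP1}), use that $\mck^{\equi}_{k,l_k}([z],[w]) = \kappa_{k,l_k}([z])\,\overline{\kappa_{k,l_k}([w])}$ with the phases combining to $e^{-il_k t_0}$, and then match against Theorem \ref{aux_thm} via the computation $\Vert X_H\Vert^2 = 2E(1-E)$ on $H^{-1}(E)$. The bookkeeping of powers of $k$ and the constants ($2\pi\sqrt{2\pi} = (2\pi)^{3/2}$) is exactly as in the paper, so there is nothing to add.
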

\begin{figure}[H]

    \begin{minipage}{0.53\textwidth}
        \centering
        \includegraphics[width=1\textwidth]{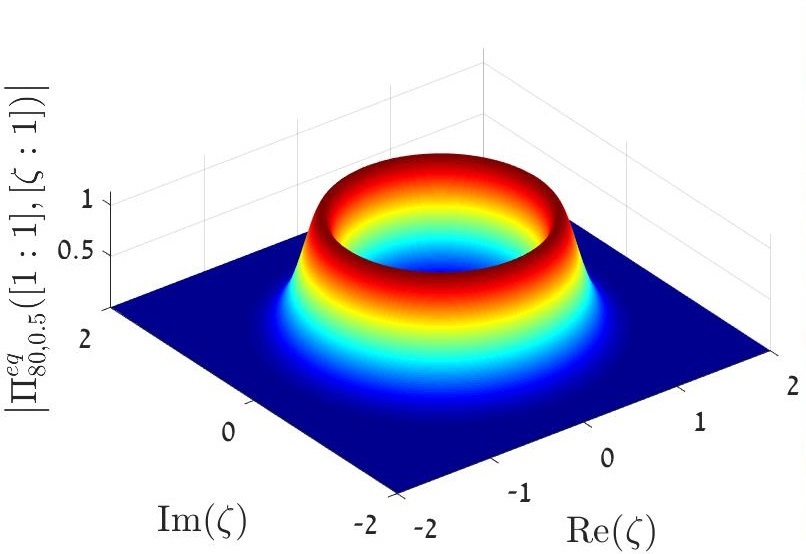} % first figure itself
    \end{minipage}\hfill
    \begin{minipage}{0.47\textwidth}
        \centering
        \includegraphics[width=0.88\textwidth]{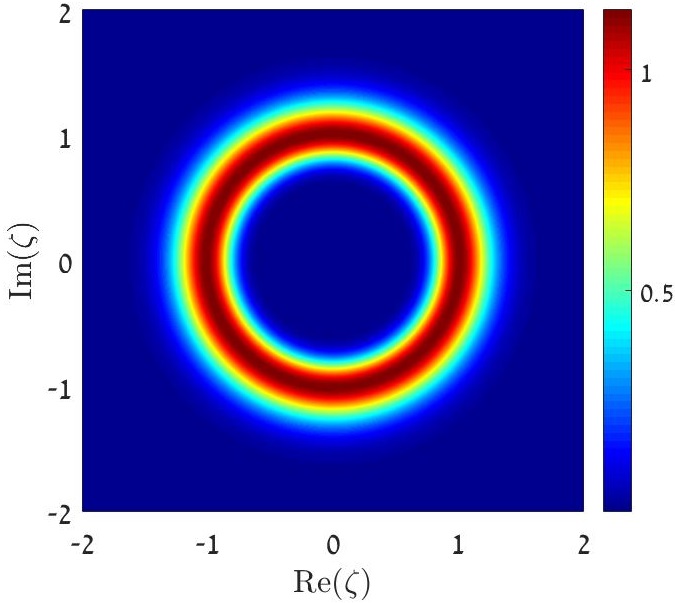} % second figure itself
    \end{minipage}
    \caption{A plot of $\left|\mck^{\equi}_{80,0.5}([1:1], [\zeta : 1])\right|$. The circle $\{|\zeta| = 1\}=\{H = 0.5\}$ is the orbit of $[1:1]$.}
\end{figure}
The partial Bergman kernel associated with a regular value $E \in (0,1)$ of $H$ is specified by (noting the identification $\mch_k \simeq \CC_k[z_0, z_1]$)
\begin{equation*} \Pi_{k, E} = \frac{k+1}{2\pi} \sum_{\frac l k \ge E} {k \choose l} (z_0 \bar w_0)^l (z_1 \bar w_1)^{k-l}.\end{equation*}
This partial binomial sum is difficult to estimate directly. Denote
\begin{equation*} \Pi_{k,E}([z],[w]) = \mck_{k,E}([z],[w]) S_k([z],[w]).\end{equation*}
The following image illustrates the behaviour of $\mck_{k,E}([z],[w])$ (as expected, the image appears to be in accordance with the estimate provided by Theorem \ref{main_thm}).
\begin{figure}[H]

    \begin{minipage}{0.4\textwidth}
        \centering
        \includegraphics[width=1\textwidth]{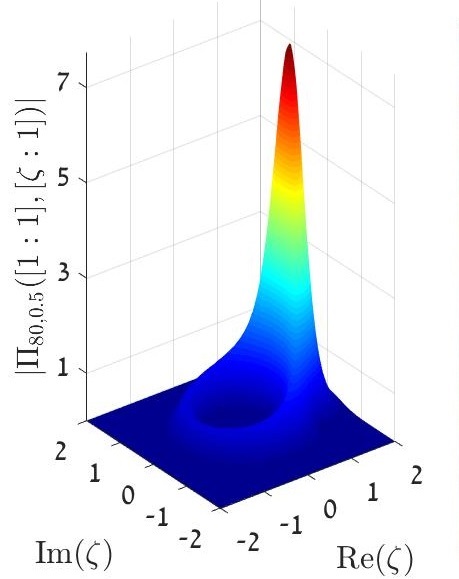} % first figure itself
    \end{minipage}\hfill
    \begin{minipage}{0.5\textwidth}
        \centering
        \includegraphics[width=1\textwidth]{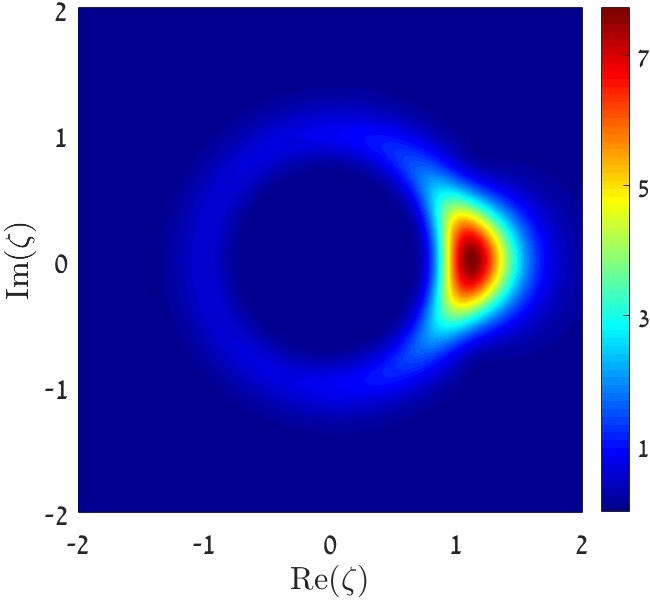} % second figure itself
    \end{minipage}
    \caption{A plot of $\left|\mck_{80,0.5}([1:1], [\zeta : 1])\right|$. The circle $\{|\zeta| = 1\}=\{H = 0.5\}$ is the orbit of $[1:1]$, and $\zeta = 1$ corresponds to the point $([1:1],[1:1])$ on the diagonal $\Delta_{\CC P^1} \subset \CC P^1 \times \CC P^1$.}
\end{figure}
Consider $[z] \in H^{-1}(E)$, and denote
\begin{equation*} \text{Er}_k(t_0, E, [z]) = \left| \mck_{k, E}([z], [U_{t_0} z]) - \mck_{k, E, \text{approx}}([z], [U_{t_0} z]) \right|,\end{equation*}
where
\begin{equation*} \mck_{k, E, \text{approx}}([z], [U_{t_0}z]) = \frac{k}{4\pi} \frac{e^{-i \lceil k E \rceil t_0}}{\sqrt{2E(1-E)}\sqrt{\pi k}} \left(1-i \cot \left(\frac{t_0} 2 \right) \right) \end{equation*}
is the leading term in the estimate of Theorem \ref{main_thm}. The next images illustrate the behaviour of the error as $k$ grows.
\begin{figure}[H]
    \centering
        \includegraphics[width=1\textwidth]{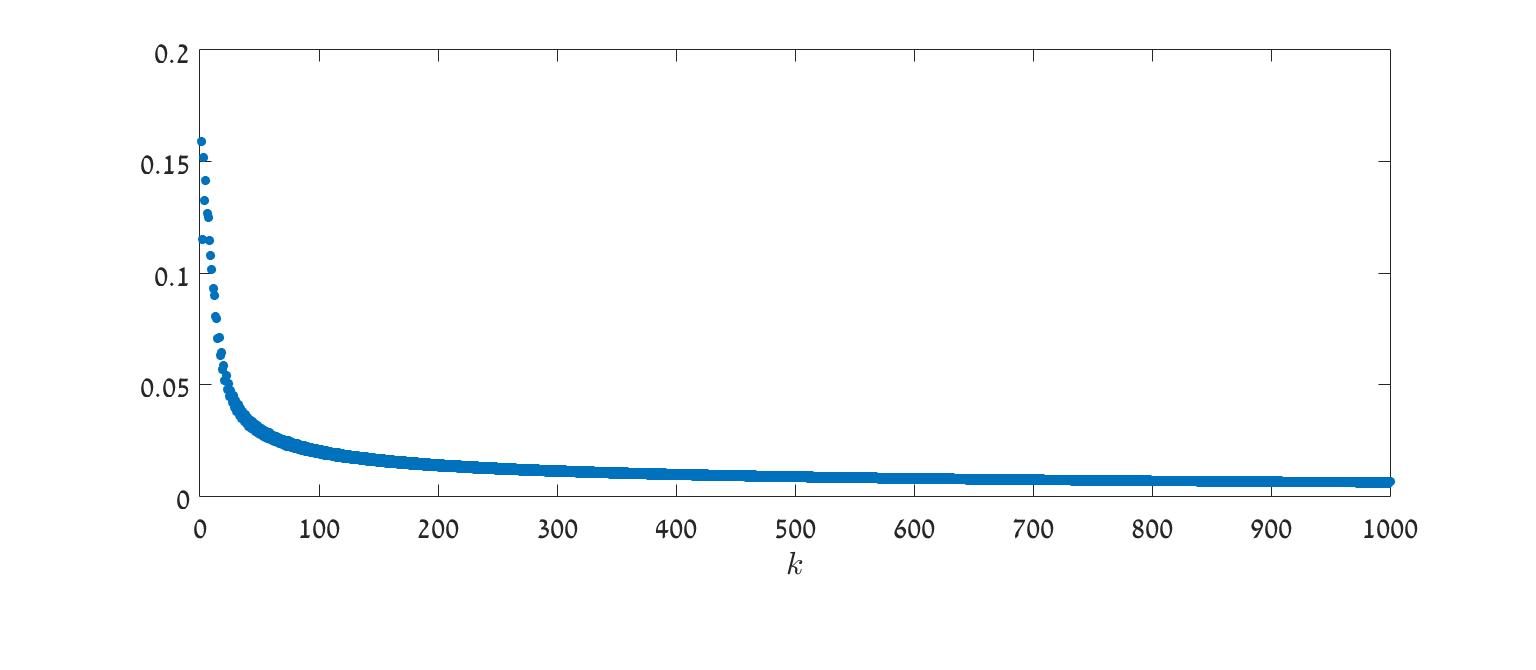}
        \includegraphics[width=1\textwidth]{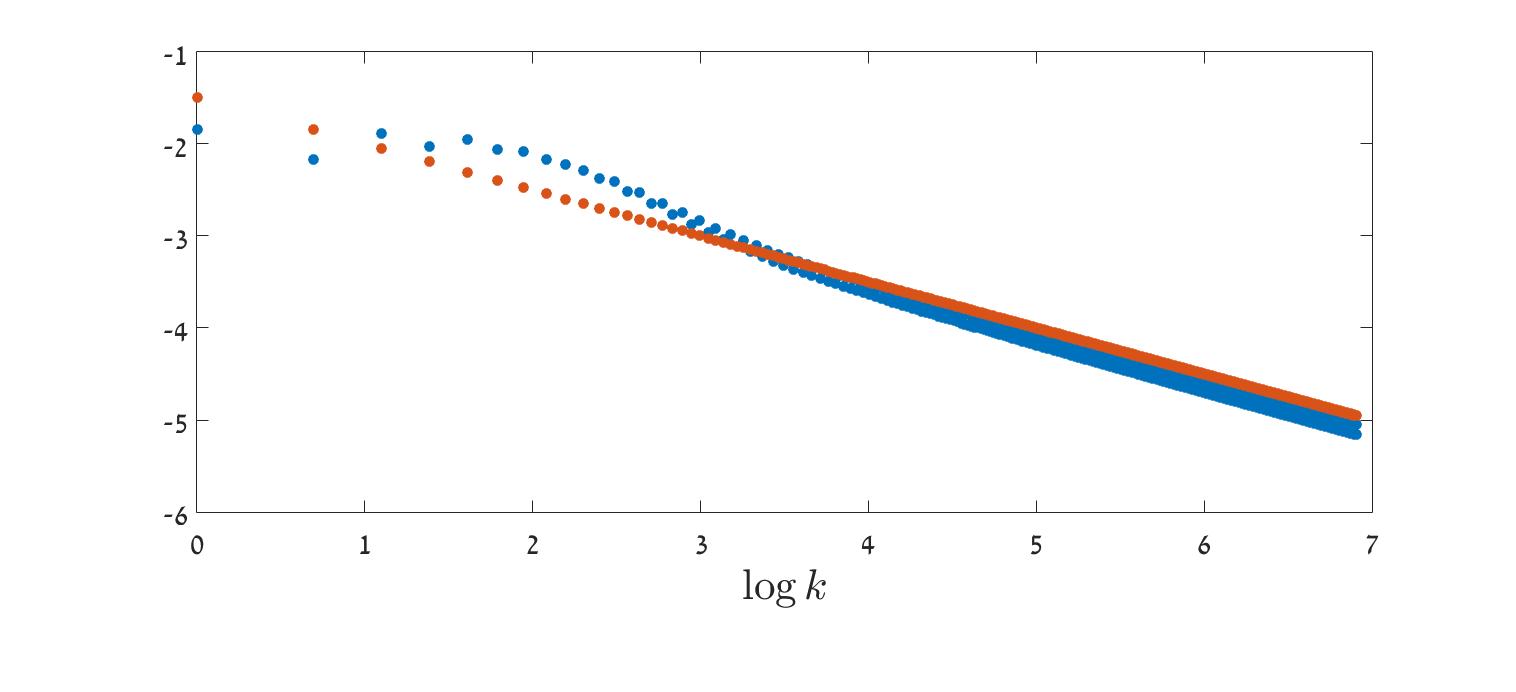}
        \caption{A plot of $\text{Er}_k\left(\frac \pi 2, \frac 1 2, [1:1]\right)$ as a function of $k$ (top), and (bottom) a plot of $\log \text{Er}_k\left(\frac \pi 2, \frac 1 2, [1:1]\right)$ (in blue) and $-1.5 - 0.5 \log(k)$ (in red) as functions of $\log(k)$. The slope of $-0.5$ is in accordance with the fact that the sub-leading term in the estimate provided by Theorem \ref{main_thm} is (in this case) of order $k^{-0.5}$.}\label{fig1}
\end{figure}
\section{The microsupport of partial Bergman kernels}\label{ms_sec}
\subsection{Definition and basic properties}
The notion of microsupport of an admissible sequence of holomorphic sections is specified in \cite{charles}, as follows. Let $(s_k)_{k \in \NN}$ denote a sequence such that $s_k \in \mch_k$. We say that $(s_k)_{k \in \NN}$ is \textit{admissible} if there exists $N > 0$ such that $\Vert s_k \Vert = \mco(k^N)$. An admissible sequence $(s_k)_{k \in \NN}$ is called \textit{negligible at $z_0 \in M$} if there exists a neighborhood $V$ of $z_0$ such that $\sup_{z \in V} |s_k(z)|_z = \mco(k^{-\infty})$, where $|\cdot|_z$ is the norm on $L^{\otimes k}_z$ induced by the Hermitian product.
\begin{definition} The microsupport of an admissible sequence $(s_k)_{k \in \NN}$ is the set $\MS(s_k) \subset M$ specified by
\begin{equation*} \MS(s_k) = M \setminus \{z \in M \ | \ (s_k)_{k \in \NN}\text{ is negligible at }z\}.\end{equation*} \end{definition}
The notion of microsupport is also applicable to sequences $(T_k)_{k \in \NN}$ with $T_k \in \End(\mch_k)$. To this end, equip $M \times M$ with the symplectic form $\pi_1^* \omega - \pi_2^* \omega$, where $\pi_1, \pi_2$ are the natural projections on the left and right factors. Then $\MS(T_k) \subset M \times M$ is defined to be the microsupport of the sequence of Schwartz kernels of $T_k$, which are holomorphic sections of $L^{\otimes k} \boxtimes (L^*)^{\otimes k} \to M \times M$.
\begin{example}(\cite{charles}) Assume that $T_k = \Pi_k \mcm_{G_k} \Pi_k$, where $G_k \in C^\infty(M)$ admits the expansion $G_k = \sum_{l=0}^\infty g_l k^{-l}$ in the $C^\infty$ topology. Then $\MS(T_k) \subset \Delta_M$, where $\Delta_M \subset M \times M$ is the diagonal. Viewed as a subset of $M$,
\begin{equation}\label{ms_formula} \MS(T_k) = \overline{\cup_{l \ge 0} \supp(g_l)}.\end{equation} \end{example}
\subsection{Proof of Theorem \ref{non_equiv_thm}}
The operator $\hat F_k$ admits the representation (\cite{tuynman}, \cite{lefloch}, Proposition 8.1.3)
\begin{equation}\label{corrected_BT} \hat F_k = \Pi_k \mcm_{F - \frac 1 {2k} \Delta F} \Pi_k,\end{equation}
where $\Delta$ is the Laplacian defined by the K\"{a}hler metric of $M$. 
\begin{lemma}\label{smoothed_proj_ms}Let $\varepsilon >0$. Let $\psi_\varepsilon : \RR \to [0,1]$ be a smooth function such that $\psi_\varepsilon(t) = 0$ for all $t \le E-\varepsilon$, $\psi_\varepsilon(t)= 1$ for all $t \ge E$. Write $\hat{\Psi}_{k,\varepsilon} = \psi_\varepsilon\big(\hat{F_k}\big)$. Then $\MS(\hat{\Psi}_{k,\varepsilon}) \subset \{F \ge E-\varepsilon\}$.\end{lemma}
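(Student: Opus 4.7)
The plan is to realize $\hat\Psi_{k,\varepsilon}$ as a Toeplitz operator $\Pi_k \mcm_{G_k} \Pi_k + \mco(k^{-\infty})$ whose symbol admits an asymptotic expansion $G_k \sim \sum_{l \ge 0} g_l k^{-l}$ in which every coefficient $g_l$ is supported inside $\{F \ge E - \varepsilon\}$, and then to invoke the formula (\ref{ms_formula}). Since $\hat F_k$ is self-adjoint and, by (\ref{corrected_BT}), uniformly bounded in $k$, its spectrum lies in a fixed compact $K \subset \RR$ for all sufficiently large $k$; multiplying $\psi_\varepsilon$ by a smooth cutoff equal to $1$ on $K$ and supported in a slight enlargement changes neither $\hat\Psi_{k,\varepsilon}$ nor the vanishing of all derivatives of $\psi_\varepsilon$ on $(-\infty, E-\varepsilon]$. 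I may thus assume $\psi_\varepsilon \in C_c^\infty(\RR)$.

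Next I would invoke the Helffer-Sj\"{o}strand formula,
$$\hat\Psi_{k,\varepsilon} = \frac 1 \pi \int_\CC \bar\partial \tilde\psi_\varepsilon(z)\,(z-\hat F_k)^{-1}\,dx\,dy,$$
with $\tilde\psi_\varepsilon$ a compactly supported quasi-analytic extension of $\psi_\varepsilon$ chosen so that $\bar\partial\tilde\psi_\varepsilon$ vanishes to infinite order on $\RR \setminus \supp\psi_\varepsilon$ and is supported in a thin complex neighborhood of $\supp\psi_\varepsilon \subset [E-\varepsilon,\infty)$. The symbolic calculus of Toeplitz operators (\cite{charles}) then supplies a parametrix for the resolvent on the support of $\bar\partial\tilde\psi_\varepsilon$, of the form
$$(z - \hat F_k)^{-1} = \Pi_k \mcm_{R_k(z,\cdot)} \Pi_k + \mco(k^{-\infty}),\qquad R_k(z,\cdot) \sim \sum_{l \ge 0} r_l(z,\cdot)\, k^{-l},$$
with leading symbol $r_0(z,p) = (z - F(p))^{-1}$ and higher $r_l(z,p)$ rational in $z$, having poles only at $z = F(p)$, with coefficients depending polynomially on derivatives of $F$ and on the K\"{a}hler data. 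Substituting this expansion into the integral and computing term-by-term via $\frac 1 \pi \int \bar\partial\tilde\psi_\varepsilon(z)(z-\lambda)^{-m}\,dx\,dy = \frac{1}{(m-1)!}\psi_\varepsilon^{(m-1)}(\lambda)$ for $m \ge 1$ (applied at $\lambda = F(p)$) produces the desired expansion $G_k \sim \sum_l g_l k^{-l}$, in which each $g_l$ is a finite sum of terms of the shape $\psi_\varepsilon^{(j)}(F)\cdot P_{l,j}$ with $P_{l,j} \in C^\infty(M)$. Since $\psi_\varepsilon^{(j)} \equiv 0$ on $(-\infty, E - \varepsilon]$ for every $j \ge 0$, every $g_l$ is supported in $F^{-1}([E-\varepsilon,\infty)) = \{F \ge E - \varepsilon\}$, and the claim follows from (\ref{ms_formula}).

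The hard part is the justification of the term-by-term substitution: the Toeplitz parametrix for $(z - \hat F_k)^{-1}$ must be uniform in $z$ up to the real axis, with controlled blow-up of the remainder as $\im z \to 0$, to be balanced against the infinite-order vanishing of $\bar\partial\tilde\psi_\varepsilon$ there. This is the standard technical ingredient in Helffer-Sj\"{o}strand-based functional calculi for Toeplitz operators; once it is in place, the rest of the argument is essentially algebraic bookkeeping.
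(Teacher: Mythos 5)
Your proposal is correct and follows the same skeleton as the paper's proof: exhibit $\hat\Psi_{k,\varepsilon}=\psi_\varepsilon(\hat F_k)$ as a Toeplitz operator $\Pi_k\mcm_{G_k}\Pi_k+\mco(k^{-\infty})$ whose symbol coefficients vanish where $F<E-\varepsilon$, then conclude via (\ref{ms_formula}). The only real difference is how that symbol expansion is obtained: the paper simply cites the functional calculus of \cite{charles} (Proposition 12, together with the localization statement that the coefficients are built from $\psi_\varepsilon^{(j)}\circ F$, so they vanish near any $z$ with $F(z)<E-\varepsilon$), whereas you re-derive it through the Helffer--Sj\"ostrand formula with a quasi-analytic extension and a resolvent parametrix in the Toeplitz calculus. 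Your route buys a more self-contained and transparent explanation of \emph{why} every $g_l$ is a sum of terms $\psi_\varepsilon^{(j)}(F)\,P_{l,j}$ — which is exactly the point the paper leaves implicit in its citation of \cite{charles}, p.~27 — at the cost of the uniform-in-$z$ resolvent parametrix with controlled blow-up as $\im z\to 0$, which you flag but do not prove; since that is precisely the content of the result the paper cites, this is a matter of citation versus sketch rather than a genuine gap. Two minor points worth keeping straight: after truncating $\psi_\varepsilon$ to compact support you should only claim that the cut-off function still vanishes on $(-\infty,E-\varepsilon]$ (it need not equal $1$ for all $t\ge E$, but that is irrelevant to the support argument), and you should note that the $\mco(k^{-\infty})$ discrepancy between $\hat\Psi_{k,\varepsilon}$ and the Toeplitz operator does not affect the microsupport, which is how (\ref{ms_formula}) is meant to be applied.
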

\begin{proof}
There exists (\cite{charles}, Proposition 12) $\Psi_{k,\varepsilon} \in C^\infty(M)$, admitting an expansion $$\Psi_{k,\varepsilon} = \sum_{l=0}^\infty k^{-l} \psi_{l,\varepsilon}$$ in the $C^\infty$ topology, such that $\psi_\varepsilon(\hat F_k )= \Pi_k \mcm_{\Psi_{k,\varepsilon}} \Pi_k + \mco(k^{-\infty})$.
If $F(z) < E - \varepsilon$, then $\psi_\varepsilon$ vanishes in a neighborhood of $F(z)$, hence (using \cite{charles}, p27) there exists a neighborhood $V$ of $z$ such that $\psi_{l,\varepsilon}\big{|}_V \equiv 0$ for all $l \ge 0$. Thus, in light of (\ref{ms_formula}), $z \not \in \MS\big(\psi_\varepsilon(\hat F_k) \big)$.
\end{proof}
The first item of Theorem \ref{non_equiv_thm} now readily follows.
\begin{corollary}\label{ms_cor} The microsupport of $\mathbbm 1_{[E, \infty)}(\hat F_k)$ satisfies $$\MS(\mathbbm 1_{[E, \infty)}(\hat F_k)) \subset \{F \ge E\}\times \{F \ge E\}.$$ The same holds for $\mathbbm 1_{(E, \infty)}(\hat F_k)$. \end{corollary}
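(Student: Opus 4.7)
The plan is to derive the corollary from Lemma \ref{smoothed_proj_ms} via a sandwiching argument coupled with a pointwise Cauchy--Schwarz estimate for the reproducing kernel of the range of $\mathbbm 1_{[E,\infty)}(\hat F_k)$. Because $\mathbbm 1_{[E,\infty)}(\hat F_k)$ is self-adjoint, its Schwartz kernel is symmetric in the two arguments up to complex conjugation, so it suffices to show that any $(z_0, w_0)$ with $F(z_0) < E$ lies outside the microsupport. Fix such a point and choose $\varepsilon > 0$ with $F(z_0) < E - \varepsilon$, together with an open neighborhood $V$ of $z_0$ on which $F < E - \varepsilon$. Let $\psi_\varepsilon$ be as in Lemma \ref{smoothed_proj_ms}; since $\psi_\varepsilon \equiv 1$ on $[E, \infty)$, the functional calculus yields $\psi_\varepsilon(\hat F_k) s = s$ for every $s$ in the range of $\mathbbm 1_{[E,\infty)}(\hat F_k)$.

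The first step is to control the sharp projection on the diagonal over $V$. I will use the variational description
\begin{equation*} \mathbbm 1_{[E,\infty)}(\hat F_k)(z,z) = \sup\{\, |s(z)|_z^2 : s \in \operatorname{Range}(\mathbbm 1_{[E,\infty)}(\hat F_k)),\ \Vert s \Vert = 1\,\}, \end{equation*}
which holds because the diagonal of the kernel of any orthogonal projection equals the squared pointwise norm of the reproducing kernel of its range. For $s$ in that range and any $z$, applying Cauchy--Schwarz to the integral $(\psi_\varepsilon(\hat F_k) s)(z) = \int \hat\Psi_{k,\varepsilon}(z, y)\, s(y)\, d\mu(y)$ gives
\begin{equation*} |s(z)|_z^2 = |(\psi_\varepsilon(\hat F_k) s)(z)|_z^2 \le \hat\Psi_{k,\varepsilon}^2(z,z)\cdot \Vert s \Vert^2, \end{equation*}
where the right-hand side is the diagonal value of the Schwartz kernel of $\psi_\varepsilon(\hat F_k)^2 = (\psi_\varepsilon^2)(\hat F_k)$. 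Since $\psi_\varepsilon^2$ again satisfies the hypotheses of Lemma \ref{smoothed_proj_ms}, that lemma yields $\MS(\psi_\varepsilon^2(\hat F_k)) \subset \{F \ge E - \varepsilon\}$, and in particular $\sup_{z \in V} \psi_\varepsilon^2(\hat F_k)(z,z) = \mco(k^{-\infty})$. Taking the supremum over unit $s$ in the range yields $\sup_{z \in V}\mathbbm 1_{[E,\infty)}(\hat F_k)(z,z) = \mco(k^{-\infty})$.

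To pass from the diagonal to an off-diagonal neighborhood of $(z_0, w_0)$, I apply the standard reproducing-kernel inequality
\begin{equation*} |\mathbbm 1_{[E,\infty)}(\hat F_k)(z,w)|^2 \le \mathbbm 1_{[E,\infty)}(\hat F_k)(z,z)\cdot \mathbbm 1_{[E,\infty)}(\hat F_k)(w,w), \end{equation*}
together with the trivial comparison $\mathbbm 1_{[E,\infty)}(\hat F_k)(w,w) \le \Pi_k(w,w)$ and the standard on-diagonal asymptotics $\Pi_k(w,w) = \mco(k^n)$, uniform on $M$. On $V \times U$ for any neighborhood $U$ of $w_0$, this produces $\sup_{V \times U} |\mathbbm 1_{[E,\infty)}(\hat F_k)(z,w)| = \mco(k^{-\infty})$, which is the desired negligibility. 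The assertion for $\mathbbm 1_{(E,\infty)}(\hat F_k)$ is handled verbatim, since $\psi_\varepsilon$ still equals $1$ on $(E, \infty)$.

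The main delicate point is transferring the information supplied by Lemma \ref{smoothed_proj_ms}, which concerns the Toeplitz-type smoothed projection $\hat\Psi_{k,\varepsilon}$, to the sharp spectral projection $\mathbbm 1_{[E,\infty)}(\hat F_k)$. The bridge is the pointwise bound $|(Ts)(z)|_z^2 \le (TT^*)(z,z)\, \Vert s \Vert^2$ applied to $T = \psi_\varepsilon(\hat F_k)$. It is essential to square the smooth cutoff \emph{before} invoking Lemma \ref{smoothed_proj_ms}, so that the resulting Toeplitz operator still has microsupport in $\{F \ge E - \varepsilon\}$ and one obtains pointwise $\mco(k^{-\infty})$ control on the diagonal rather than merely an operator-norm estimate.
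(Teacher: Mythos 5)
Your argument is correct, but it takes a different route from the paper. The paper also starts from the sandwiching identity $\mathbbm 1_{[E,\infty)}(\hat F_k)=\hat\Psi_{k,\varepsilon}\,\mathbbm 1_{[E,\infty)}(\hat F_k)=\mathbbm 1_{[E,\infty)}(\hat F_k)\,\hat\Psi_{k,\varepsilon}$, but then finishes purely inside the microsupport calculus of \cite{charles}: it invokes the composition property $\MS(A_kB_k)\subset\{(z_1,z_3)\mid\exists z_2,\ (z_1,z_2)\in\MS(A_k),\ (z_2,z_3)\in\MS(B_k)\}$ to get $\MS(\mathbbm 1_{[E,\infty)}(\hat F_k))\subset\MS(\hat\Psi_{k,\varepsilon})\times\MS(\hat\Psi_{k,\varepsilon})$, and then lets $\varepsilon\to 0$. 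You instead replace that black-box composition law with elementary reproducing-kernel estimates: the diagonal value of a projection kernel as a sup over unit vectors in its range (valid since the evaluation map into the one-dimensional fibre $L_z^{\otimes k}$ has rank one), the pointwise Cauchy--Schwarz bound $|(Ts)(z)|_z^2\le (TT^*)(z,z)\Vert s\Vert^2$ with $T=\psi_\varepsilon(\hat F_k)$ and the correct observation that $TT^*=(\psi_\varepsilon^2)(\hat F_k)$ with $\psi_\varepsilon^2$ still satisfying the hypotheses of Lemma \ref{smoothed_proj_ms}, and finally the off-diagonal bound $|P(z,w)|^2\le P(z,z)P(w,w)$ together with $P(w,w)\le\Pi_k(w,w)=\mco(k^n)$; self-adjointness handles the case $F(w_0)<E$. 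Each step checks out, and the choice of $\varepsilon$ after fixing the point plays the role of the paper's ``$\varepsilon$ arbitrary'' limit. What the paper's route buys is brevity and reusability: the same composition argument is immediately recycled to prove Corollary \ref{two_proj_cor} for products of two different spectral projections, which your diagonal-based estimate does not give directly. What your route buys is self-containedness: it avoids the microsupport composition property entirely, needing only Lemma \ref{smoothed_proj_ms} and the uniform on-diagonal bound on the Bergman kernel, and it produces explicit pointwise $\mco(k^{-\infty})$ control on neighborhoods rather than a statement phrased through the calculus.
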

\begin{proof}
We prove for $A_k = \mathbbm 1_{[E, \infty)}(\hat F_k)$, but the proof for $\mathbbm 1_{(E, \infty)}(\hat F_k)$ is identical.
Let $\psi_\varepsilon$ be as in Lemma \ref{smoothed_proj_ms}, and write $\hat \Psi_{k,\varepsilon} = \psi_\varepsilon(\hat F_k)$. Then $$A_k = \hat \Psi_{k,\varepsilon} A_k = A_k\hat \Psi_{k,\varepsilon},$$ hence (using \cite{charles}, p24)
\begin{align*} &\MS(A_k) = \MS(\hat \Psi_{k, \varepsilon} A_k)\\ &\subset \{(z_1,z_3) \ | \ \exists z_2 \in M\ \text{such that } (z_1, z_2) \in \MS(\hat \Psi_{k,\varepsilon})\text{ and }(z_2, z_3) \in \MS(A_k)\}\\ &\subset \MS(\hat \Psi_{k,\varepsilon}) \times M,\end{align*}
where by a slight abuse of notation, in the second line $\MS(\hat \Psi_{k, \varepsilon})$ is viewed as a subset of $M \times M$, and in the third line it is viewed as a subset of $M$. Repeating this argument,
\begin{equation*} \MS(A_k) = \MS(A_k\hat \Psi_{k,\varepsilon}) \subset M \times \MS(\hat \Psi_{k,\varepsilon}),\end{equation*}
therefore (noting Lemma \ref{smoothed_proj_ms})
\begin{align*} &\MS(A_k) \subset \left(\MS(\hat \Psi_{k,\varepsilon}) \times M\right) \cap \left( M \times \MS(\hat \Psi_{k,\varepsilon}) \right)\\  &= \MS(\hat \Psi_{k,\varepsilon}) \times \MS(\hat \Psi_{k,\varepsilon}) \subset \{F \ge E -\varepsilon\} \times \{F \ge E-\varepsilon\}.\end{align*}
Since $\varepsilon>0$ is arbitrary, we obtain the required.
\end{proof}
The second item of Theorem \ref{non_equiv_thm} immediately follows from Corollary \ref{ms_cor}.
\begin{corollary} Replacing $F$ with $G = -F$ and $E$ with $-E$, if $G(z) < -E$ or $G(w) < -E$ then there exists a neighborhood $\mcn \subset M \times M$ of $(z,w)$ such that
\begin{equation*} \sup_{\mcn}\left|\mathbbm 1_{(-E, \infty)}(\hat G_k)\right| = \mco(k^{-\infty}),\end{equation*}
where
\begin{gather*} \mathbbm 1_{(-E, \infty)}(\hat G_k) = \Pi_k - \mathbbm 1_{(-\infty, -E]}(\hat G_k) = \Pi_k - \mathbbm 1_{[E, \infty)}(\hat F_k),\end{gather*}
that is, $\sup_{\mcn} \left| \Pi_k - \mathbbm 1_{[E, \infty)}(\hat F_k)\right| = \mco(k^{-\infty})$. \end{corollary}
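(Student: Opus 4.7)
The plan is a direct substitution: specialize Corollary \ref{ms_cor} to the function $G = -F$ with threshold $-E$. The only preparatory observation I need is that $\hat G_k = -\hat F_k$, which is immediate from (\ref{corrected_BT}): by linearity of the Laplacian, $G - \tfrac{1}{2k} \Delta G = -(F - \tfrac{1}{2k}\Delta F)$, and therefore $\hat G_k = \Pi_k \mcm_{-(F - \tfrac{1}{2k}\Delta F)} \Pi_k = -\hat F_k$. Functional calculus then yields $\mathbbm 1_{(-\infty, -E]}(\hat G_k) = \mathbbm 1_{[E, \infty)}(\hat F_k)$, which is precisely the identity $\mathbbm 1_{(-E, \infty)}(\hat G_k) = \Pi_k - \mathbbm 1_{[E, \infty)}(\hat F_k)$ stated in the corollary.

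Next, I would invoke the second clause of Corollary \ref{ms_cor} (``the same holds for $\mathbbm 1_{(E, \infty)}(\hat F_k)$''), applied with $F$ replaced by $G$ and $E$ replaced by $-E$. This gives $\MS(\mathbbm 1_{(-E, \infty)}(\hat G_k)) \subset \{G \ge -E\} \times \{G \ge -E\}$. The hypothesis $G(z) < -E$ or $G(w) < -E$ places $(z,w)$ in the open complement of this set, so the definition of microsupport supplies a product neighborhood $\mcn$ of $(z,w)$ on which $|\mathbbm 1_{(-E, \infty)}(\hat G_k)| = \mco(k^{-\infty})$. Combining this with the identity from the previous paragraph gives $\sup_{\mcn}|\Pi_k - \mathbbm 1_{[E, \infty)}(\hat F_k)| = \mco(k^{-\infty})$, as required.

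There is no real obstacle here: the statement is a mechanical unpacking of the $F \mapsto -F$ symmetry of the Berezin--Toeplitz construction together with the spectral identity $\mathbbm 1_{[E, \infty)}(\hat F_k) = \mathbbm 1_{(-\infty, -E]}(-\hat F_k)$. If anything merits brief explicit verification, it is only the identity $\hat G_k = -\hat F_k$ via (\ref{corrected_BT}); once this is in place, the open-interval clause of Corollary \ref{ms_cor} does all the work, and no further analysis of eigenvalues at the boundary $E$ is needed.
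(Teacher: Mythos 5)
Your proposal is correct and follows exactly the route the paper intends: the corollary is a mechanical application of the open-interval clause of Corollary \ref{ms_cor} with $F$ replaced by $G=-F$ and $E$ by $-E$, combined with the identities $\hat G_k=-\hat F_k$ and $\mathbbm 1_{(-E,\infty)}(\hat G_k)=\Pi_k-\mathbbm 1_{(-\infty,-E]}(\hat G_k)=\Pi_k-\mathbbm 1_{[E,\infty)}(\hat F_k)$. The paper gives no separate proof beyond this substitution, and your write-up supplies the same steps (your verification of $\hat G_k=-\hat F_k$ also follows directly from linearity of $F\mapsto \mcm_F-\frac ik\nabla^{\otimes k}_{X_F}$, without needing (\ref{corrected_BT})).
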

Finally,
\begin{lemma} Corollary \ref{ms_cor} implies Corollary \ref{two_proj_cor}.\end{lemma}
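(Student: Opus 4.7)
Write $P_k = \mathbbm 1_{[E_1,\infty)}(\hat F_k)$ and $Q_k = \mathbbm 1_{[E_2,\infty)}(\hat G_k)$, so that $P_kQ_k \in \End(\mch_k)$ has a Schwartz kernel in $C^\infty(M\times M, L^{\otimes k} \boxtimes (L^*)^{\otimes k})$. The plan is first to show that this kernel is negligible at every point of $M\times M$ (i.e.\ has empty microsupport), and then to convert the resulting uniform pointwise $\mco(k^{-\infty})$ bound into the desired operator norm estimate.

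For the first step, I would apply the same composition rule for microsupports that was invoked in the proof of Corollary \ref{ms_cor} (from \cite{charles}, p24):
\begin{equation*}
\MS(P_kQ_k) \subset \bigl\{ (z_1,z_3) \,\big|\, \exists\, z_2 \in M \text{ with } (z_1,z_2)\in \MS(P_k),\ (z_2,z_3)\in \MS(Q_k)\bigr\}.
\end{equation*}
By Corollary \ref{ms_cor} applied to both $\hat F_k$ and $\hat G_k$, any such $z_2$ would have to satisfy $F(z_2)\ge E_1$ and $G(z_2)\ge E_2$ simultaneously. The disjointness hypothesis $\{F\ge E_1\}\cap\{G\ge E_2\}=\emptyset$ rules this out, so $\MS(P_kQ_k)=\emptyset$. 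By definition of the microsupport, every $(z,w)\in M\times M$ has a neighborhood on which the pointwise norm of the Schwartz kernel of $P_kQ_k$ is $\mco(k^{-\infty})$; a finite cover of the compact manifold $M\times M$ yields a uniform bound
\begin{equation*}
\sup_{(z,w)\in M\times M} \bigl| (P_kQ_k)(z,w) \bigr| = \mco(k^{-\infty}).
\end{equation*}

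For the second step, I would pass from this kernel bound to the operator norm via the Hilbert--Schmidt norm. Since $M$ has finite Liouville volume,
\begin{equation*}
\|P_kQ_k\|_{\op}^2 \le \|P_kQ_k\|_{\mathrm{HS}}^2 = \int_{M\times M} \bigl| (P_kQ_k)(z,w)\bigr|^2 \,d\mu(z)\,d\mu(w) \le \vol(M)^2\,\sup_{M\times M}\bigl|(P_kQ_k)\bigr|^2,
\end{equation*}
and the right-hand side is $\mco(k^{-\infty})$ by the preceding paragraph. This yields the claimed $\|P_kQ_k\|_{\op} = \mco(k^{-\infty})$.

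The only mild subtlety I would need to be careful about is the invocation of the composition rule: it is stated in \cite{charles} for admissible sequences of operators, so I should note that $(P_k)$ and $(Q_k)$ are admissible (being uniformly bounded by $1$ in operator norm, their Schwartz kernels are polynomially bounded thanks to pointwise estimates of the full Bergman kernel). Beyond that, the argument is a clean two-line deduction from Corollary \ref{ms_cor} and compactness of $M$.
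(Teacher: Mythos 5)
Your argument is correct and follows the paper's proof essentially verbatim: the same microsupport composition rule from \cite{charles} combined with the disjointness hypothesis $\{F \ge E_1\} \cap \{G \ge E_2\} = \emptyset$ gives $\MS\left(\mathbbm 1_{[E_1,\infty)}(\hat F_k)\, \mathbbm 1_{[E_2,\infty)}(\hat G_k)\right) = \emptyset$, so the product is a negligible sequence. The only (harmless) divergence is at the final step, where the paper cites \cite{charles}, Remark 5, to pass from negligibility to $\Vert \cdot \Vert_{\op} = \mco(k^{-\infty})$, whereas you make this self-contained via compactness of $M \times M$ and the Hilbert--Schmidt bound, which works since the Liouville measure is fixed independently of $k$.
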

\begin{proof}
We keep the notations used in the formulation of Corollary \ref{two_proj_cor}, and also write
\begin{equation*} \Pi_{k,1} = \mathbbm 1_{[E_1, \infty)}(\hat F_k),\ \Pi_{k,2}= \mathbbm 1_{[E_2, \infty)}(\hat G_k).\end{equation*}
Then using Corollary \ref{ms_cor},
\begin{align*} &\MS\left(\Pi_{k,1} \Pi_{k,2} \right)\\ &\subset \{(z_1, z_3)\ | \ \exists z_2 \in M \text{ such that } (z_1, z_2) \in \MS(\Pi_{k,1}) \text{ and } (z_2, z_3) \in \MS\left(\Pi_{k,2}\right)\}\\ &\subset \{(z_1, z_3) \ | \ \exists z_2 \in M \text{ such that } z_2 \in \{F \ge E_1\} \cap \{G \ge E_2\}\} = \emptyset.\end{align*}
Thus $\Pi_{k,1} \Pi_{k,2}$ is a negligible sequence, which implies (\cite{charles}, Remark 5) that $\Vert \Pi_{k,1} \Pi_{k,2} \Vert_{\op} = \mco(k^{-\infty})$.\end{proof}
The proof of the first part of the first item of Theorem \ref{aux_thm} is essentially identical to that of Theorem \ref{non_equiv_thm}. Namely,
\begin{lemma}\label{eig_space_proj_MS} Let $\lambda_k$ be a sequence of eigenvalues of $\hat F_k$ with $|\lambda_k - E| = \mco(k^{-1})$, where $E \in F(M)$. Let $\Pi_{k, \lambda_k, F}$ be the orthogonal projection onto the eigenspace associated with $\lambda_k$. Then $\MS(\Pi_{k, \lambda_k, F}) \subset \{F = E\} \times \{F = E\}$. \end{lemma}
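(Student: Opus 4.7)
The plan is to mimic the argument used to prove Corollary \ref{ms_cor}, but with a two-sided smooth cutoff around $E$ in place of the one-sided cutoff $\psi_\varepsilon$. The key observation is that although $\Pi_{k, \lambda_k, F}$ is not itself a smooth function of $\hat F_k$ (the characteristic function of $\{\lambda_k\}$ is not smooth), it is \emph{absorbed} by any smooth cutoff supported in $[E - \varepsilon, E + \varepsilon]$ that equals $1$ near $E$, once $k$ is large enough. This lets us control its microsupport by the microsupport of the smooth cutoff.

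More precisely, for any $\varepsilon > 0$ I would fix a smooth function $\chi_\varepsilon : \RR \to [0,1]$ supported in $[E - \varepsilon, E+\varepsilon]$ and equal to $1$ on $[E - \varepsilon/2, E+\varepsilon/2]$. Since $|\lambda_k - E| = \mco(k^{-1})$, for all $k$ large enough we have $\chi_\varepsilon(\lambda_k) = 1$, and hence by functional calculus
\begin{equation*} \chi_\varepsilon(\hat F_k)\,\Pi_{k, \lambda_k, F} = \Pi_{k, \lambda_k, F}\,\chi_\varepsilon(\hat F_k) = \Pi_{k, \lambda_k, F}. \end{equation*}
Next, by \cite{charles}, Proposition 12, there exists a sequence $X_{k,\varepsilon} \in C^\infty(M)$ admitting an asymptotic expansion $\sum_{l \ge 0} k^{-l} x_{l,\varepsilon}$ in the $C^\infty$ topology, with $x_{0,\varepsilon} = \chi_\varepsilon \circ F$, and $\chi_\varepsilon(\hat F_k) = \Pi_k \mcm_{X_{k,\varepsilon}} \Pi_k + \mco(k^{-\infty})$. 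Exactly as in the proof of Lemma \ref{smoothed_proj_ms}, one uses \cite{charles}, p27 to see that for $z \notin \supp(\chi_\varepsilon \circ F)$, all $x_{l,\varepsilon}$ vanish in a neighborhood of $z$, so by (\ref{ms_formula}) we obtain (viewing as a subset of $M$)
\begin{equation*} \MS\bigl(\chi_\varepsilon(\hat F_k)\bigr) \subset \{E - \varepsilon \le F \le E + \varepsilon\}. \end{equation*}

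Finally I would apply the microsupport composition bound used in Corollary \ref{ms_cor} (\cite{charles}, p24) twice: from $\chi_\varepsilon(\hat F_k) \Pi_{k, \lambda_k, F} = \Pi_{k, \lambda_k, F}$ we get $\MS(\Pi_{k, \lambda_k, F}) \subset \MS(\chi_\varepsilon(\hat F_k)) \times M$, and from $\Pi_{k, \lambda_k, F} \chi_\varepsilon(\hat F_k) = \Pi_{k, \lambda_k, F}$ we get $\MS(\Pi_{k, \lambda_k, F}) \subset M \times \MS(\chi_\varepsilon(\hat F_k))$. Intersecting yields
\begin{equation*} \MS(\Pi_{k, \lambda_k, F}) \subset \{E-\varepsilon \le F \le E+\varepsilon\} \times \{E - \varepsilon \le F \le E + \varepsilon\}, \end{equation*}
and letting $\varepsilon \to 0^+$ gives the desired containment $\MS(\Pi_{k, \lambda_k, F}) \subset \{F = E\} \times \{F = E\}$.

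The only subtle point is checking that the identity $\chi_\varepsilon(\hat F_k) \Pi_{k, \lambda_k, F} = \Pi_{k, \lambda_k, F}$ really holds \emph{for all sufficiently large $k$}, uniformly in the sense needed for the microsupport argument; this is immediate from the assumption $|\lambda_k - E| = \mco(k^{-1})$ together with $\chi_\varepsilon \equiv 1$ on $[E - \varepsilon/2, E+\varepsilon/2]$. Beyond this, the argument is a direct transposition of Corollary \ref{ms_cor}, so no real obstacle is anticipated.
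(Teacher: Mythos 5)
Your argument is correct and is essentially the paper's own proof: the paper also takes a two-sided cutoff $\psi_\varepsilon$ equal to $1$ near $E$ and supported in $\{|t-E|\le\varepsilon\}$, notes that $\psi_\varepsilon(\hat F_k)$ absorbs $\Pi_{k,\lambda_k,F}$ for large $k$ thanks to $|\lambda_k-E|=\mco(k^{-1})$, bounds $\MS(\psi_\varepsilon(\hat F_k))$ as in Lemma \ref{smoothed_proj_ms}, applies the composition bound of Corollary \ref{ms_cor}, and lets $\varepsilon\to 0$. No gaps.
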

\begin{proof}
Given $\varepsilon > 0$ we consider a smooth function $\psi_\varepsilon : \RR \to [0,1]$ such that $\psi_\varepsilon(t) = 1$ whenever $|t - E| < \frac \varepsilon 2$ and $\psi_\varepsilon(t) = 0$ whenever $|t - E| \ge \varepsilon$. Write $\hat \Psi_{k,\varepsilon} = \psi_\varepsilon(\hat F_k)$, and note that exactly as in Lemma \ref{smoothed_proj_ms}, $$\MS(\hat \Psi_{k,\varepsilon}) \subset \{|F - E| \le \varepsilon\} \times \{|F - E| \le \varepsilon\}.$$ For every sufficiently large $k$, it holds that $\hat \Psi_{k,\varepsilon} \Pi_{k, \lambda_k,F} = \Pi_{k,\lambda_k,F} \hat\Psi_{k,\varepsilon} = \Pi_{k,\lambda_k,F}$. Thus, exactly as in Corollary \ref{ms_cor}, we obtain that $$\MS(\Pi_{k, \lambda_k,F}) \subset \MS(\hat\Psi_{k,\varepsilon}) \times \MS(\psi_{\varepsilon}(\hat F_k)) \subset \{|F-E| \le \varepsilon\} \times \{|F - E| \le \varepsilon \}.$$ Since $\varepsilon$ is arbitrary, $\MS(\Pi_{k, \lambda_k}) \subset \{F = E\} \times \{F = E\}$. \end{proof}
\section{Fourier theory and partial Bergman kernels}
\subsection{The Cauchy-Szeg\"{o} projection on the circle}
Let $\RR/2\pi \ZZ = S^1 \subset \CC$ denote the unit circle. Let $\hat g(p) = \langle g, e_p \rangle_{L^2(S^1)}$ denote the $p$-th Fourier coefficient of $g \in L^1(S^1)$, where $p \in \ZZ$ and $e_p(t) = e^{i p t}$.
The \textit{Cauchy-Szeg\"{o} projection} $\Pi_{S^1} :L^2(S^1) \to H^2(S^1)$ is the orthogonal projection on the Hardy space
\begin{equation*} H^2(S^1) = \left\{g \in L^2(S^1) \ | \ \hat g(p) = 0\ \forall p < 0\right\},\end{equation*}
and it admits the formula
\begin{equation*} \Pi_{S^1}(g)(t) = \lim_{r \to 1^-} \frac 1 {2\pi}\int_{-\pi}^\pi \frac{g(t-s)}{1-re^{i s}} ds.\end{equation*}

The \textit{periodic Hilbert transform} $\mch_{S^1} : L^2(S^1) \to L^2(S^1)$ is specified by $\mch_{S^1}(e_p) = -i \sgn(p) e_p$, or equivalently
\begin{equation*} \mch_{S^1}(g)(t) = \lim_{\varepsilon \to 0^+} \frac 1 {2\pi} \int_{\varepsilon \le |t-s| \le \pi} g(s) \cot \left(\frac{t - s} 2 \right) ds.\end{equation*}
\begin{corollary} The Cauchy-Szeg\"{o} projection acts by $e_p(t) \mapsto \mathbbm 1_{[0,\infty)}(p) e_p(t)$, hence it can be expressed in terms of the periodic Hilbert transform, as follows:
\begin{equation*} \Pi_{S^1}(g) = \frac 1 2 \left(i \mch_{S^1}(g) + g + \hat g(0) \right).\end{equation*}\end{corollary}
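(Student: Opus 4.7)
The plan is to verify the claimed identity on the orthonormal Fourier basis $\{e_p\}_{p \in \ZZ}$ of $L^2(S^1)$ and then extend by linearity and continuity, exploiting the fact that all three operators appearing on the right-hand side are bounded on $L^2(S^1)$.

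First, I would collect the three ingredients. By the very definition of $H^2(S^1)$, the family $\{e_p\}_{p \ge 0}$ is an orthonormal basis of $H^2(S^1)$ while $\{e_p\}_{p < 0}$ spans its orthogonal complement, so
\begin{equation*}
\Pi_{S^1}(e_p) = \mathbbm{1}_{[0,\infty)}(p)\, e_p.
\end{equation*}
The periodic Hilbert transform was defined in the paper by $\mch_{S^1}(e_p) = -i\,\sgn(p)\,e_p$, and trivially $\hat{e_p}(0) = \delta_{p,0}$.

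Next, I would plug $g = e_p$ into the right-hand side of the claimed formula and split into cases:
\begin{equation*}
\tfrac 1 2\bigl(i\mch_{S^1}(e_p) + e_p + \hat{e_p}(0)\bigr) = \tfrac 1 2 \bigl(\sgn(p)\,e_p + e_p + \delta_{p,0}\bigr),
\end{equation*}
which equals $e_p$ when $p > 0$, equals $0$ when $p < 0$, and equals $\tfrac 1 2(0 + e_0 + 1) = e_0$ when $p = 0$ (using $e_0 \equiv 1$). In each case this matches $\Pi_{S^1}(e_p)$, so the identity holds on the basis.

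Finally, I would extend by density: $\Pi_{S^1}$ and $\mch_{S^1}$ are bounded operators on $L^2(S^1)$, and $g \mapsto \hat g(0)$ is a bounded linear functional, so both sides of the claimed identity define bounded operators on $L^2(S^1)$ that agree on the total family $\{e_p\}_{p \in \ZZ}$; hence they agree on all of $L^2(S^1)$. There is no real obstacle here — the only point requiring a bit of care is interpreting the constant $\hat g(0)$ as the constant function $\hat g(0) \cdot e_0$, and remembering the convention $\sgn(0) = 0$ which accounts for the extra $\hat g(0)$ correction at the zero Fourier mode.
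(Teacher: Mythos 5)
Your proposal is correct and is essentially the argument the paper intends: the corollary is stated as an immediate consequence of the definitions, and checking the identity on the Fourier basis $\{e_p\}$ (with $\sgn(0)=0$ accounting for the $\hat g(0)$ term, interpreted as the constant function $\hat g(0)e_0$ under the paper's normalized inner product) and extending by boundedness is exactly the intended verification. No gaps.
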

\subsection{Spectral projections and unitary representations of $S^1$}
Let $\mch$ denote a finite dimensional complex Hilbert space, and write $\dim \mch = d$. Let $A\in \End(\mch)$ be a Hermitian operator. Denote
\begin{equation*} U_A(t) = e^{i t A},\end{equation*}
and assume that $t \mapsto U_A(t)$ is $2\pi$-periodic.

Let $v_1, ..., v_d$ be an orthonormal eigenbasis of $A$ such that $A v_m = p_m v_m$, with $p_1 \le p_2 \le ... \le p_d$. Then $p_m \in \ZZ$, and we can write
\begin{equation*} A = \sum_{m=1}^d p_m v_m \otimes v_m^*,\ U_A(t) = \sum_{m=1}^d e^{i t p_m} v_m \otimes v_m^*,\end{equation*}
where $v_m^*$ is the dual vector of $v_m$.
\begin{corollary} The spectral projection $\Pi_{A} = \mathbbm 1_{[0,\infty)}(A)$ can be written in the following form:
\begin{equation*}  \Pi_{A} = \left(\Pi_{S^1}(U_A) \right)(0) = \lim_{r \to 1^-} \frac 1 {2\pi} \int_{-\pi}^\pi \frac{U_A(s)}{1-re^{is}} ds.\end{equation*}
In terms of the Hilbert transform,
\begin{equation}\label{hilb_tr_repr} \Pi_{A} =  \frac 1 2 \left(i \mch_{S^1}(U_{A})(0) + \Id_{\mch} + \widehat U_{A}(0) \right),\end{equation}
where
\begin{equation*} \widehat U_A(0) = \frac 1 {2\pi} \int_{-\pi}^\pi U_A(t) dt\end{equation*}
and 
\begin{equation*} \mch_{S^1}(U_A)(0) = \frac 1 {2\pi} \int_0^\pi \left(U_{A}(-t) - U_{A}(t)\right) \cot\left(\frac t 2 \right)dt.\end{equation*}\end{corollary}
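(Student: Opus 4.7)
The plan is to deduce the corollary from the scalar Fourier-theoretic identities for the Cauchy--Szegő projection established immediately above, by reducing the operator-valued problem to a collection of scalar ones via the eigenbasis of $A$. First I would write $U_A(t) = \sum_{m=1}^d e^{ip_m t}\, v_m \otimes v_m^*$, viewing $U_A$ as a finite operator-valued trigonometric polynomial whose $e_{p_m}$-coefficient is the rank-one operator $v_m \otimes v_m^*$. By linearity of $\Pi_{S^1}$ (applied coefficient-wise) and the defining action $\Pi_{S^1}(e_p) = \mathbbm{1}_{[0,\infty)}(p)\, e_p$, one obtains $\Pi_{S^1}(U_A)(t) = \sum_m \mathbbm{1}_{[0,\infty)}(p_m) e^{ip_m t}\, v_m \otimes v_m^*$. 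Evaluating at $t=0$ collapses this to $\sum_{p_m \ge 0} v_m \otimes v_m^* = \mathbbm{1}_{[0,\infty)}(A) = \Pi_A$, proving the first equality. The integral representation then follows immediately by substituting the expansion of $U_A$ into the Poisson-type integral formula for $\Pi_{S^1}$ recalled in the preceding subsection, interchanging the finite sum with the integral and the limit.

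For the Hilbert-transform expression (\ref{hilb_tr_repr}), I would apply the preceding corollary $\Pi_{S^1}(g) = \tfrac{1}{2}(i\mathcal{H}_{S^1}(g) + g + \hat g(0))$ coefficient-wise to $g = U_A$ and evaluate at $t=0$, using that $U_A(0) = \mathrm{Id}_{\mathcal{H}}$ replaces the term $g(0)$. The one remaining task is to convert the principal-value integral defining $\mathcal{H}_{S^1}(U_A)(0)$ on the symmetric domain $\{\varepsilon \le |s| \le \pi\}$ into a one-sided integral on $[0,\pi]$. I would split the integration domain into its positive and negative halves, perform the substitution $s \mapsto -s$ on the negative half, and exploit the antisymmetry $\cot(-s/2) = -\cot(s/2)$; combining the two halves yields the stated expression $\frac{1}{2\pi}\int_0^\pi (U_A(-t) - U_A(t))\cot(t/2)\,dt$.

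There is no genuine conceptual obstacle here: the entire argument is a linear-algebraic transcription of the scalar identities of the previous subsection to the operator setting, made possible by simultaneous diagonalization of $A$ and $U_A(t)$. The only point demanding minor care is the change of variable in the principal-value integral, where one must keep track of signs coming from $\cot(-s/2) = -\cot(s/2)$ in order to recover the asserted symmetric form with $U_A(-t) - U_A(t)$.
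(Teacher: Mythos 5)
Your proposal is correct and follows the same route the paper intends: the corollary is an immediate consequence of the eigenbasis expansion $U_A(t)=\sum_m e^{ip_mt}\,v_m\otimes v_m^*$ together with the scalar identities for $\Pi_{S^1}$ and $\mch_{S^1}$ applied coefficient-wise and evaluated at $t=0$, with the one-sided form of the principal-value integral obtained exactly by the symmetrization $s\mapsto -s$ and the oddness of $\cot(s/2)$ that you describe. No gaps.
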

More generally, given $p,p_0 \in \ZZ$ and $g \in L^2(S^1)$, recall that
\begin{equation*} \widehat{e_{-p_0} g}(p) = \langle e_{-p_0} g, e_p \rangle_{L^2(S^1)} = \langle g, e_{p + p_0} \rangle_{L^2(S^1)} = \hat g(p+p_0).\end{equation*}
This relation allows us to extend formula (\ref{hilb_tr_repr}) to  $\Pi_{A, E} = \mathbbm 1_{[E, \infty)}(A)$, $E \in \RR$.
\begin{corollary} Let $E \in \RR$. Denote $U_{A,E} = e_{-\lceil E \rceil} U_A$. Then
\begin{equation}\label{gen_hilb_tr_repr}\Pi_{A,E} = \left(\Pi_{S^1}(U_{A,E})\right)(0) = \frac 1 2 \left(i \mch_{S^1}(U_{A, E})(0) + \Id_{\mch} + \widehat U_{A, E}(0)\right),\end{equation}
where
\begin{equation*} \widehat U_{A,E}(0) = \frac 1 {2\pi} \int_{-\pi}^\pi U_{A,E}(t) dt = \frac 1 {2\pi} \int_{-\pi}^\pi U_A(t) e^{-i \lceil E \rceil t} dt\end{equation*}
and
\begin{equation*} \mch_{S^1}(U_{A,E})(0) =\frac 1 {2\pi} \int_0^\pi \left(U_A(-t) e^{i \lceil E \rceil t} - U_A(t) e^{-i \lceil E \rceil t} \right) \cot \left(\frac t 2 \right) dt.\end{equation*} \end{corollary}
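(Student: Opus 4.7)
The plan is to reduce the statement to the previous corollary (which handles $E=0$) by a spectral shift. The key observation is that
\[ U_{A,E}(t) = e^{-i\lceil E\rceil t}\,U_A(t) = e^{it(A - \lceil E\rceil \Id_{\mch})} = U_B(t), \]
where $B := A - \lceil E \rceil \Id_{\mch}$. Since the eigenvalues $p_m$ of $A$ lie in $\ZZ$ (as $t \mapsto U_A(t)$ was assumed $2\pi$-periodic), the eigenvalues $p_m - \lceil E \rceil$ of $B$ also lie in $\ZZ$, so $t \mapsto U_B(t)$ is $2\pi$-periodic and the previous corollary applies verbatim to $B$.

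First I would verify that $\mathbbm 1_{[0,\infty)}(B) = \Pi_{A,E}$. This requires two identifications: the equality $\mathbbm 1_{[0,\infty)}(B) = \mathbbm 1_{[\lceil E\rceil, \infty)}(A)$ is immediate from the definition of $B$, while $\mathbbm 1_{[\lceil E\rceil, \infty)}(A) = \mathbbm 1_{[E,\infty)}(A)$ holds because $\mathrm{spec}(A) \subset \ZZ$ forces the conditions $p_m \ge E$ and $p_m \ge \lceil E \rceil$ to coincide. Substituting $U_B = U_{A,E}$ into the formula of the previous corollary then yields both the identity $\Pi_{A,E} = (\Pi_{S^1}(U_{A,E}))(0)$ and its Hilbert-transform expansion $\Pi_{A,E} = \frac{1}{2}\bigl(i \mch_{S^1}(U_{A,E})(0) + \Id_{\mch} + \widehat{U_{A,E}}(0)\bigr)$.

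Finally, the explicit integral formulas for $\widehat{U_{A,E}}(0)$ and $\mch_{S^1}(U_{A,E})(0)$ follow by directly inserting $U_{A,E}(s) = e^{-i\lceil E \rceil s} U_A(s)$ into the definitions recalled in the previous corollary (using the change of variables $s \mapsto -s$ on the negative half of the principal value to obtain the symmetric form of the Hilbert transform at $0$). No genuine obstacle is anticipated: the statement is a bookkeeping consequence of the $E = 0$ case combined with the Fourier shift identity $\widehat{e_{-p_0} g}(p) = \hat g(p + p_0)$, which is explicitly recalled in the sentence preceding the statement. The only point requiring a moment of care is the integrality of $\mathrm{spec}(A)$, which is what lets us replace $E$ by $\lceil E \rceil$ without changing the projection.
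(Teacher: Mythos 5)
Your proof is correct and follows essentially the paper's route: the paper derives the corollary from the Fourier-shift identity $\widehat{e_{-p_0}g}(p)=\hat g(p+p_0)$ together with the integrality of $\mathrm{spec}(A)$, and your reduction via $B=A-\lceil E\rceil\,\Id_{\mch}$ (so that $U_{A,E}=U_B$ and $\mathbbm 1_{[0,\infty)}(B)=\mathbbm 1_{[E,\infty)}(A)$) is just the operator-level repackaging of that same observation. No gap; the integrality point you flag is indeed the only thing that needs care.
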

\subsection{Proof of Theorems \ref{main_thm}, \ref{aux_thm}}
The proof of Theorem \ref{main_thm} relies on formula (\ref{gen_hilb_tr_repr}), which is also applicable to the Schwartz kernels of the operators involved. Thus, we obtain a representation of $\Pi_{k,E}(z,w)$ in terms of the Bergman kernel and the Schwartz kernel of the quantum propagator defined by $\hat H_k$. The well-known asymptotic properties of the Bergman kernel, with the help of the stationary phase lemma, produce the desired estimate. Notably, we can assume without loss of generality that the circle action is effective (Lemma \ref{N>1}), and then the third term in (\ref{gen_hilb_tr_repr}) is the equivariant Bergman projection associated with the eigenvalue $\frac{\lceil k E \rceil}k$ of $\hat H_k$. Accordingly, the proof of Theorem \ref{main_thm} relies on the second item of Theorem \ref{aux_thm}. The latter is established by similar arguments, namely, the stationary phase lemma is used together with the integral representation of equivariant Bergman kernels as "Fourier coefficients" of the full Bergman kernel.

Recall that there exists $N \ge 1$ and an open dense subset $M_N \subset M$ such that the stabilizer group of every $z \in M_N$ is the subgroup $S^1_N \subset S^1$ of order $N$. We assume throughout that $N = 1$ (i.e., that the circle action is effective); this is justified by the following lemma.
\begin{lemma}\label{N>1} It suffices to prove Theorems \ref{main_thm}, \ref{aux_thm} in the case $N = 1$. \end{lemma}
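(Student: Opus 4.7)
The plan is to replace the given action $\varphi$ by an effective $S^1$-action $\tilde\varphi$ obtained from $\varphi$ via an $N$-fold reparametrization, and to show that each quantity in Theorems \ref{main_thm}, \ref{aux_thm} for $\varphi$ rescales to the corresponding quantity for $\tilde\varphi$. The starting observation is that since $M_N \subset M$ is open and dense, the identity $\varphi_{t + 2\pi/N}(z) = \varphi_t(z)$ holds on $M_N$ and hence, by continuity, everywhere on $M$. Therefore $\tilde\varphi_s := \varphi_{s/N}$ is a well-defined $2\pi$-periodic holomorphic Hamiltonian $S^1$-action, and its generic stabilizer is trivial, so the $N = 1$ versions of the theorems apply to $\tilde\varphi$.

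Next I would track how every piece of data transforms. The Hamiltonian of $\tilde\varphi$ is $\tilde H = H/N$, so $X_{\tilde H} = X_H/N$, $\Vert X_{\tilde H}(z_0) \Vert = \Vert X_H(z_0)\Vert/N$, and the gradient flow satisfies $\tilde\psi_a = \psi_{a/N}$. Because the quantum counterpart (\ref{corrected_BT}) is linear in the Hamiltonian, $\hat{\tilde H}_k = \hat H_k / N$. Consequently $\mathbbm 1_{[E,\infty)}(\hat H_k) = \mathbbm 1_{[E/N, \infty)}(\hat{\tilde H}_k)$, and each eigenspace of $\hat H_k$ corresponding to eigenvalue $\lambda_k$ coincides with an eigenspace of $\hat{\tilde H}_k$ corresponding to $\tilde\lambda_k := \lambda_k/N$. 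In particular, the Schwartz kernels $\Pi_{k,E}$ and $\Pi^{\equi}_{k,\lambda_k}$ for $\varphi$ agree with those for $\tilde\varphi$ at parameters $E/N$ and $\lambda_k/N$.

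I would then apply the $N = 1$ versions of Theorems \ref{main_thm}, \ref{aux_thm} to $\tilde\varphi$, making the substitutions $a \mapsto Na$, $b \mapsto Nb$, $s = Nt$, $\tilde E = E/N$, $\tilde \lambda_k = \lambda_k/N$, so that $\tilde\psi_{Na/\sqrt k}(z_0) = \psi_{a/\sqrt k}(z_0)$ and $\tilde\varphi_{Nt}(z_0) = \varphi_t(z_0)$. A short computation shows that each ingredient rescales correctly: the Gaussian factor $e^{-((Na)^2 + (Nb)^2) \Vert X_{\tilde H}(z_0)\Vert^2/2}$ simplifies to $e^{-(a^2+b^2)\Vert X_H(z_0)\Vert^2/2}$ because the two factors of $N^2$ cancel; the phase $e^{-i\lceil k\tilde E\rceil \cdot Nt}$ becomes $e^{-iN\lceil kE/N\rceil t}$ (respectively $e^{-ik\lambda_k t}$); and the prefactor $(1-i\cot(Nt/2))/(2\Vert X_{\tilde H}(z_0)\Vert\sqrt{\pi k})$ acquires the required factor of $N$ in the numerator. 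The error bounds also transfer: $a,b \in B$ translates to $Na, Nb \in NB$ (still bounded), and a closed interval $\mci \subset S^1 \setminus S^1_N$ maps via $t \mapsto Nt$ to a closed interval in $S^1 \setminus \{0\}$, yielding the stated estimates for $C_{k,i}$ and $C^{\equi}_{k,i}$.

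The main obstacle is really only the descent step: verifying rigorously that the $S^1_N$-action is trivial on all of $M$ (not just on $M_N$), so that $\tilde\varphi$ is actually an $S^1$-action rather than an $\RR$-action. Once this is in place, the remainder of the argument is an entirely mechanical rescaling, with the slight care needed to match the particular normalizations $\lceil \cdot \rceil$, $\sqrt k$, and $N/k$-quantized spectrum appearing in the statements.
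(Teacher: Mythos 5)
Your proposal is correct and follows essentially the same route as the paper: the $S^1_N$-action is trivial on $M_N$ and hence on $M$ by density and continuity, so $\varphi_{t/N}$ is an effective action with Hamiltonian $H/N$, quantum counterpart $\hat H_k/N$, and identical spectral projections, reducing to $N=1$. The only difference is that you also carry out the rescaling bookkeeping ($a\mapsto Na$, $t\mapsto Nt$, $E\mapsto E/N$, $\Vert X_H\Vert\mapsto\Vert X_H\Vert/N$) that the paper leaves implicit, which is a welcome verification that the $N$-dependent constants in Theorems \ref{main_thm} and \ref{aux_thm} come out right.
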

\begin{proof}
Assume that $N > 1$. If $t \in S^1_N$, then $\varphi_t|_{M_N} = \Id$, hence by continuity $\varphi_t = \Id$. It follows that $t \mapsto \varphi_{N,t} = \varphi_{\frac t N}$, $t \in \RR/2\pi\ZZ$, is an effective holomorphic circle action, generated by the Hamiltonian $H_N = \frac 1 N H$. Writing $\hat H_{N,k} = \frac 1 N \hat H_k$,
\begin{equation*} \Pi_{k, E} = \mathbbm 1_{[E, \infty)}(\hat H_k) = \mathbbm 1_{[\frac E N, \infty)}\left(\hat H_{N,k}\right).\end{equation*}
Thus, the case $N > 1$ is reduced to the case $N = 1$.
Similarly,
\begin{equation*} \Pi^{\equi}_{k, \lambda_k} = \mathbbm 1_{\{\lambda_k\}}(\hat H_k) = \mathbbm 1_{\{\frac{\lambda_k} N\}}(\hat H_{N,k}).\end{equation*}
\end{proof}
Recall that $E \in H(M)$ is a regular value of $H$, and assume that $z \in H^{-1}(E)$. Denote $V_k(t) = e^{i k t \hat H_k}$, and consider the map $$V_k(z,w) : S^1 \to L_z^{\otimes k} \otimes (L_w^*)^{\otimes k}$$ specified by $t \mapsto V_k(t)(z,w)$, where the latter is the Schwartz kernel of $V_k(t)$.
Denote $E_k = \frac{\lceil k E \rceil} k$ and $V_{k,E_k}(t)(z,w) = e^{- i kE_k t} V_k(t)(z,w)$.
According to (\ref{gen_hilb_tr_repr}),
\begin{equation}\label{hilb_tr_form_ker} \Pi_{k,E}(z,w) = \frac 1 2 \left(i \mch_{S^1}\left(V_{k,E_k}(z,w)\right)(0) + \Pi_k(z,w) + \widehat{V_{k,E_k}(z,w)}(0)\right),\end{equation}
where \begin{equation*} \widehat{V_{k,E_k}(z,w)}(0) = \frac 1 {2\pi} \int_0^{2\pi} V_{k,E_k}(t)(z,w) dt = \Pi^{\equi}_{k, E_k}(z,w)\end{equation*} is the equivariant Bergman kernel associated with the eigenvalue $E_k$ of $\hat H_k$.
The Schwartz kernel of $V_{k,E_k}(t)$ admits the following useful formula.
\begin{lemma}\label{shift_prop_ker} The Schwartz kernel of $V_{k,E_k}(t)$ is specified by
\begin{equation*} V_{k,E_k}(t)(z,w) = c_{k,E_k}(z,t) \mct_{k,t}\left(\Pi_k(\varphi_t(z), w)\right),\end{equation*}
where $\mct_{k,t} : L^{\otimes k}_{\varphi_t(z)} \otimes (L^*)^{\otimes k}_w \to L^{\otimes k}_z \otimes (L^*)^{\otimes k}_w$ is the parallel transport along the curve $t' \mapsto \left(\varphi_{t-t'}(z), w \right)$, $t' \in [0,t]$, and $c_{k,E_k}(z,t) = e^{i k(H(z) - E_k)t}= e^{ik(E- E_k)t}$.\end{lemma}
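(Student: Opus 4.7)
The plan is to factor the quantum propagator $V_k(t) = e^{ikt\hat H_k}$ as the composition of multiplication by a phase and a geometric operator implementing the lifted classical flow, and then read off the Schwartz kernel from the reproducing property of the Bergman kernel $\Pi_k$.

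First I would verify the key commutativity. Since $X_H(H) = \iota_{X_H} dH = \omega(X_H, X_H) = 0$, the Leibniz rule gives $[\nabla^{\otimes k}_{X_H}, \mcm_H] = \mcm_{X_H(H)} = 0$ on smooth sections of $L^{\otimes k}$. Rewriting $ikt\hat H_k = ikt\mcm_H + t\nabla^{\otimes k}_{X_H}$ as a sum of commuting operators, exponentiation yields
\[ V_k(t) = \mcm_{e^{iktH}} \circ e^{t\nabla^{\otimes k}_{X_H}} \]
on all smooth sections. Note that neither factor preserves $\mch_k$ individually; only the composition does, because $\varphi_t$ is holomorphic.

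Next I would identify the second factor geometrically. Define $(\tilde T_t s)(z) = \mct_{k,t}(s(\varphi_t(z)))$, with $\mct_{k,t}$ the parallel transport from $L^{\otimes k}_{\varphi_t(z)}$ to $L^{\otimes k}_z$ along the orbit of $z$ as prescribed in the statement. Working in a parallel frame $\{e_{t'}\}_{t'\in[0,t]}$ along this orbit and writing $s(\varphi_{t'}(z)) = f(t') e_{t'}$, one has $(\nabla^{\otimes k}_{X_H} s)(\varphi_{t'}(z)) = f'(t') e_{t'}$, whence $\partial_{t'}(\tilde T_{t'} s)(z)|_{t'=0} = (\nabla^{\otimes k}_{X_H} s)(z)$. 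Combined with the semigroup property $\tilde T_{t+\epsilon} = \tilde T_\epsilon \circ \tilde T_t$ (from composing flow segments and concatenating parallel transports), uniqueness for the ODE $\partial_t \tilde T_t = \nabla^{\otimes k}_{X_H} \tilde T_t$ with $\tilde T_0 = \Id$ identifies $\tilde T_t = e^{t\nabla^{\otimes k}_{X_H}}$.

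Combining the two steps, for any $s \in \mch_k$,
\[ (V_k(t) s)(z) = e^{iktH(z)}\, \mct_{k,t}(s(\varphi_t(z))). \]
Substituting the reproducing formula $s(\varphi_t(z)) = \int_M \Pi_k(\varphi_t(z), w) s(w)\, d\mu(w)$ and pulling the linear map $\mct_{k,t}$ and the scalar $e^{iktH(z)}$ through the integral, I would read off
\[ V_k(t)(z,w) = e^{iktH(z)}\, \mct_{k,t}(\Pi_k(\varphi_t(z), w)), \]
and multiplication by $e^{-ikE_k t}$ yields the asserted identity, with $c_{k,E_k}(z,t) = e^{ik(E - E_k)t}$ under the hypothesis $z \in H^{-1}(E)$. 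The only subtle point is that the factorization holds on $C^\infty(M, L^{\otimes k})$ rather than on $\mch_k$ alone, so the ODE identification of $\tilde T_t$ with $e^{t\nabla^{\otimes k}_{X_H}}$ must be carried out on smooth sections; once done, applying it to holomorphic $s$ and extracting the kernel is immediate.
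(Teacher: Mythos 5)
Your proposal is correct and follows essentially the same route as the paper: the heart of the matter is the pointwise formula $V_k(t)s(z)=e^{iktH(z)}\,\tau_{k,t}\,s(\varphi_t(z))$, which the paper simply quotes from \cite{lefloch} (Proposition 8.2.1) and which you reprove via the commuting decomposition $ikt\hat H_k = ikt\mcm_H + t\nabla^{\otimes k}_{X_H}$ together with the flow/parallel-transport identification of $e^{t\nabla^{\otimes k}_{X_H}}$. The passage to the Schwartz kernel is done in the paper by expanding $\sum_m (V_k(t)s_m)\otimes s_m^*$ over an orthonormal basis, while you use the reproducing property of $\Pi_k$; since the $w$-dependence of your candidate kernel factors through $\Pi_k(\varphi_t(z),w)=\sum_m s_m(\varphi_t(z))\otimes s_m^*(w)$, the two steps are equivalent.
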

\begin{proof}
As shown in \cite{lefloch}, Proposition 8.2.1, the action of $V_k(t)$ is specified by
\begin{equation*} V_k(t)s(z) = e^{i k t H(z)} \tau_{k,t} s(\varphi_t(z)),\ s \in \mch_k,\end{equation*}
where $\tau_{k,t} : L^{\otimes k}_{\varphi_t(z)} \to L^{\otimes k}_z$ is the parallel transport along the curve $t' \mapsto \varphi_{t-t'}(z)$, $t' \in [0,t]$. Thus, considering any orthonormal basis $s_1,...,s_{d_k}$ of $\mch_k$, we find that the Schwartz kernel of $V_k(t)$ is given by
\begin{align*} V_k(t)(z,w) &= \sum_{m = 1}^{d_k} \left(V_k(t) s_m\right)(z) \otimes s_m^*(w)\\ &= e^{i k t H(z)} \sum_{m=1}^{d_k} \left(\tau_{k,t} s_m \right)(\varphi_t(z)) \otimes s_m^*(w)\\ &= e^{i k t H(z)} \left(\tau_{k,t} \otimes \Id\right) \left(\sum_{m=1}^{d_k} s_m(\varphi_t(z)) \otimes s_m^*(w) \right) \\ &= e^{ik t H(z)} \mct_{k,t} \left(\Pi_k(\varphi_t(z), w) \right).\end{align*}
Multiplying both sides by $e^{- i kE_k t}$, we obtain the required. \end{proof}
The microsupport $\MS(\Pi_k)$ of $\Pi_k$ equals the diagonal $\Delta_M \subset M \times M$. In particular, this implies (see \cite{charles}, Proposition 8, or \cite{mamae}, Theorem 1) that for every $\varepsilon > 0$ and $N > 0$ there exists $C_{\varepsilon, N} > 0$ such that for every $z, w \in M$, if $\dist(z,w) \ge \varepsilon$ then 
\begin{equation}\label{berg_sup_est}  \lvert \Pi_k(z,w) \rvert < C_{\varepsilon, N} k^{-N},\end{equation}
and that for every vector field $Y$ on $M \times M$ there exists $C_{Y, \varepsilon, N}$ such that for every $z, w \in M$, if $\dist(z,w) \ge \varepsilon$ then
\begin{equation}\label{berg_der_sup_est} \lvert (\tilde \nabla_k)_Y \Pi_k(z,w) \rvert < C_{Y, \varepsilon, N} k^{-N}.\end{equation}
Here, $\tilde \nabla_k$ is the connection on $L^{\otimes k} \boxtimes (L^*)^{\otimes k}$ induced by $\nabla$. 
The first part of Theorem \ref{main_thm} readily follows from these estimates.
\begin{lemma}\label{dist_positive} If $z \in M$ and $w \in M \setminus \mco_z$, then $(z, w) \not \in \MS(\Pi_{k,E})$. \end{lemma}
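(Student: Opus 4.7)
The plan is to apply the Hilbert transform decomposition (\ref{hilb_tr_form_ker}) pointwise and show that each of the three resulting pieces is $\mco(k^{-\infty})$ uniformly on a neighborhood of $(z,w)$. First, since $z\in\mco_z$ and $w\notin\mco_z$, in particular $z\ne w$, so by continuity there is a neighborhood $\mcn\subset M\times M$ of $(z,w)$ and $\varepsilon>0$ such that $\dist(z',w')\ge\varepsilon$ for all $(z',w')\in\mcn$. The Bergman kernel estimate (\ref{berg_sup_est}) then gives $\sup_{\mcn}|\Pi_k|=\mco(k^{-\infty})$, handling the middle term of (\ref{hilb_tr_form_ker}).

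Next I would handle the two orbit-averaged terms. The orbit $\mco_z$ is compact and $w\notin\mco_z$, so $\delta:=\dist(w,\mco_z)>0$; by uniform continuity of $(t,z')\mapsto\varphi_t(z')$, after shrinking $\mcn$ we may assume $\dist(w',\varphi_t(z'))\ge\delta/2$ for every $(z',w')\in\mcn$ and every $t\in S^1$. Using Lemma \ref{shift_prop_ker} together with the isometry property of $\mct_{k,t}$,
\begin{equation*}
\bigl|V_{k,E_k}(t)(z',w')\bigr|=\bigl|\Pi_k(\varphi_t(z'),w')\bigr|,
\end{equation*}
and (\ref{berg_sup_est}) bounds this by $C_{\delta/2,N}k^{-N}$ uniformly in $t\in S^1$ and $(z',w')\in\mcn$. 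Integrating in $t$, the equivariant Bergman term $\widehat{V_{k,E_k}(z',w')}(0)$ is immediately $\mco(k^{-\infty})$ on $\mcn$.

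The Hilbert transform term
\begin{equation*}
\mch_{S^1}(V_{k,E_k}(z',w'))(0)=\frac 1{2\pi}\int_0^\pi\bigl(V_{k,E_k}(-t)(z',w')-V_{k,E_k}(t)(z',w')\bigr)\cot(t/2)\,dt
\end{equation*}
is the only slightly subtle piece, and will be the main obstacle, because of the singularity of $\cot(t/2)$ at $t=0$. I would split the integral at some fixed $t_0\in(0,\pi)$. On $[t_0,\pi]$ the cotangent is bounded and the uniform estimate on $V_{k,E_k}$ above already yields an $\mco(k^{-\infty})$ contribution. On $[0,t_0]$ I would use the mean value theorem in $t$:
\begin{equation*}
\bigl|V_{k,E_k}(-t)(z',w')-V_{k,E_k}(t)(z',w')\bigr|\le 2t\sup_{s\in[-t,t]}\bigl|\partial_s V_{k,E_k}(s)(z',w')\bigr|,
\end{equation*}
so that $|\cdot|\,|\cot(t/2)|$ stays integrable near $0$. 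Differentiating Lemma \ref{shift_prop_ker}, the $s$-derivative produces two terms: the factor $ik(E-E_k)=\mco(1)$ times $V_{k,E_k}(s)$, and the derivative of $\mct_{k,s}(\Pi_k(\varphi_s(z'),w'))$, which, since $\partial_s\varphi_s(z')=X_H(\varphi_s(z'))$ and $\mct_{k,s}$ is parallel, is controlled by (\ref{berg_der_sup_est}) applied to the vector field $X_H$ on the left factor. Both contributions are $\mco(k^{-N})$ uniformly in $s\in S^1$ and $(z',w')\in\mcn$. Hence the near-zero part is $\mco(k^{-N})$ as well, and combining all three terms of (\ref{hilb_tr_form_ker}) gives $\sup_{\mcn}|\Pi_{k,E}|=\mco(k^{-\infty})$, as required.
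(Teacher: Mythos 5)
Your argument is correct and follows essentially the same route as the paper: decompose $\Pi_{k,E}$ via (\ref{hilb_tr_form_ker}), use the positive distance from $w'$ to the whole orbit of $z'$ together with (\ref{berg_sup_est}) for the Bergman and equivariant terms, and tame the $\cot(t/2)$ singularity in the Hilbert transform term by the mean value theorem applied to $\partial_t V_{k,E_k}$, whose two contributions are controlled by (\ref{berg_sup_est}) and (\ref{berg_der_sup_est}) via Lemma \ref{shift_prop_ker}. The only cosmetic difference is that you split the integral at a fixed $t_0$ while the paper bounds $\frac 1 t\lvert V_{k,E_k}(t)-\Pi_k\rvert$ uniformly on all of $(0,\pi)$; both versions are fine.
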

\begin{proof}
Let $\mcn_0 \subset M \times M$ denote a neighborhood of $(z,w)$ such that for every $(z',w') \in \mcn_0$ and $t \in [-\pi, \pi]$,
$$\dist(\varphi_t(z'),w') > \frac \varepsilon 2$$
for some $\varepsilon >0$. We wish to prove that $$\sup_{  \mcn_0} \lvert \Pi_{k,E} \rvert = \mco(k^{-\infty}),$$ which would mean (by definition) that $(z, w) \not \in \MS(\Pi_{k,E})$. %In light of (\ref{hilb_tr_form_ker}),
%\begin{equation*} \sup_{\mcn_0} \lvert \Pi_{k,E} \rvert \le \frac 1 2 \left(\sup_{\mcn_0} \left| \mch_{S^1}(V_{k,E}(z', w'))(0) \right| + \sup_{\mcn_0} |\Pi_k| + \sup_{ \mcn_0} \left| \widehat{V_{k,E}(z',w')}(0) \right| \right).\end{equation*}
%
Let $$\mcn = \{(\varphi_t(z'), w') \ | \ (z',w') \in \mcn_0,\ t \in [-\pi, \pi]\}.$$ 
Looking at the representation (\ref{hilb_tr_form_ker}) in light of (\ref{berg_sup_est}), we immediately note that the second term satisfies
\begin{equation*} \sup_{\mcn_0}|\Pi_k| = \mco(k^{-\infty}).\end{equation*}
Similarly, estimate (\ref{berg_sup_est}) implies that the term $\widehat{V_{k,E_k}(z',w')}(0) = \Pi^{\equi}_{k,E_k}(z',w')$, which is the equivariant Bergman kernel associated with the eigenvalue $E_k$, satisfies
\begin{equation}\label{equiv_neglig} \sup_{(z', w') \in \mcn_0} \left|\Pi^{\equi}_{k,E_k}(z',w') \right| \le \sup_{\mcn_0}\frac 1 {2\pi} \int_{-\pi}^\pi \left| \Pi_k\left(\varphi_t(z'), w' \right) \right| dt = \mco(k^{-\infty}).\end{equation}
Finally, consider the first term in (\ref{hilb_tr_form_ker}). Let $V'_{k,E_k}(u)(z',w') = \frac{d}{dt}\big{(} V_k(z',w')\big{)}(u)$. Note that $V_{k,E_k}(0)(z',w') = \Pi_k(z',w')$. If $t \in (0,\pi)$, then by the mean-value theorem,
\begin{equation*} |V_{k,E_k}(t)(z',w') - \Pi_k(z',w')| \le t \sup_{u \in [0,t]} \left|V_{k,E_k}'(u)(z',w')\right|,\end{equation*}
where (in light of Lemma \ref{shift_prop_ker})
\begin{multline*} V'_{k,E_k}(u)(z',w')=\\  c_{k,E_k}(u) \mct_{k,u}\left(ik (E- E_k)\Pi_k(\varphi_u(z'), w') +  (\tilde \nabla_k)_{Y_H}(\Pi_k)(\varphi_u(z'), w')\right),\end{multline*}
with $Y_H = (X_H, 0)$.

Applying (\ref{berg_sup_est}) and (\ref{berg_der_sup_est}), we see that
\begin{equation}\label{der_est}\begin{aligned} \left|V'_{k,E_k}(u)(z',w')\right| &\le k|E - E_k| |\Pi_k(\varphi_u(z'), w') | + |(\tilde \nabla_k)_{Y_H}\Pi_k(\varphi_u(z'), w')|\\  &\le \sup_{\mcn}|\Pi_k| + \sup_{\mcn}|(\tilde \nabla_k)_{Y_H} \Pi_k|= \mco(k^{-\infty}).\end{aligned} \end{equation}
Thus, 
\begin{equation*} \sup_{\mcn_0 \times (0,\pi)} \frac 1 t \left|V_{k,E_k}(t)(z',w')- \Pi_k(z',w')\right| = \mco(k^{-\infty}),\end{equation*}
which implies (since if $t \approx 0$, then $\cot\left(\frac t 2 \right) \approx \frac 2 t $) that
\begin{equation*} \sup_{\mcn_0} \left|\mch_{S^1}(V_{k,E_k}(z',w'))(0) \right| = \mco(k^{-\infty}).\end{equation*}
\end{proof}
We proceed with the proof of the second item of Theorem \ref{main_thm}, relying on formula (\ref{hilb_tr_form_ker}). Fix $a, b \in \RR$. The behaviour of the third term in (\ref{hilb_tr_form_ker}) is specified in Theorem \ref{aux_thm}, hence in what follows we focus on $\mch_{S^1}\left(V_{k,E_k}(z_{a_k,0},z_{b_k,t_0})\right)(0)$, where we denote $a_k = \frac a {\sqrt k}$, $b_k = \frac b {\sqrt k}$, and for any $\alpha\in \RR$, $t \in S^1$,
\begin{equation*} z_\alpha = \psi_\alpha(z_0),\ z_{\alpha,t} = \varphi_t (z_\alpha).\end{equation*}
\begin{lemma}\label{on_orbit} Fix $z_0 \in H^{-1}(E) \cap M_1$. Let $\mci \subset S^1 \setminus \{0\}$ be a closed interval. Then for every fixed $\delta > 0$ small enough, if $t_0 \in \mci$ then
\begin{gather*} \mch_{S^1}(V_{k,E_k}(z_{a_k}, z_{b_k,t_0}))(0) =\\- \frac 1 {2\pi} \int_{t_0 - \delta}^{t_0 + \delta} V_{k, E_k}(t)(z_{a_k}, z_{b_k,t_0}) \cot \left(\frac t 2 \right) dt + r_k(a,b,t_0).\end{gather*}Here, $r_k(a,b,t_0) = \mco(k^{-\infty})$ uniformly, in the following sense: for every bounded set $B \subset \RR$ and $N > 0$, there exists $c_{B,N} > 0$ such that $|r_k(a,b,t_0)| \le c_{B,N} k^{-N}$ for every $a, b \in B$, $t_0 \in \mci$.\end{lemma}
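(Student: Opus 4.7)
The plan is to convert the Hilbert-transform formula of Section~4.1 into a principal-value integral of $g(t) := V_{k,E_k}(t)(z_{a_k}, z_{b_k,t_0})$ on $(-\pi,\pi)$. Using $\cot(-t/2)=-\cot(t/2)$ and the substitution $s=-t$, one checks that
\begin{equation*} \mch_{S^1}(g)(0) = -\,\text{p.v.}\,\frac 1 {2\pi} \int_{-\pi}^\pi g(t) \cot\!\left(\tfrac t 2 \right) dt.\end{equation*}
Since $\mci \subset S^1 \setminus \{0\}$ is compact, there exists $c>0$ with $\dist_{S^1}(\mci,0) \ge c$; fix $\delta < c/4$, so that $[t_0-\delta, t_0+\delta]$ is disjoint from $[-c/2, c/2]$ uniformly in $t_0 \in \mci$. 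In particular, the singularity of $\cot(t/2)$ at $0$ lies in the complement. Splitting the principal-value integral yields
\begin{equation*} \mch_{S^1}(g)(0) = -\frac 1 {2\pi}\int_{t_0-\delta}^{t_0+\delta} g(t) \cot\!\left(\tfrac t 2\right) dt + r_k(a,b,t_0),\quad r_k = -\,\text{p.v.}\,\frac{1}{2\pi} \int_J g(t) \cot\!\left(\tfrac t 2\right) dt,\end{equation*}
where $J = [-\pi,\pi] \setminus [t_0-\delta, t_0+\delta]$. The whole task is to show $r_k = \mco(k^{-\infty})$ uniformly in the sense of the statement.

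Next I would obtain uniform Bergman-kernel bounds on $J$. By Lemma \ref{shift_prop_ker}, $|g(t)| = |\Pi_k(\varphi_t(z_{a_k}), z_{b_k,t_0})|$. Since $N=1$ and $z_0 \in M_N = M_1$, the action near $z_0$ is free, so $\varphi_t(z_0) = \varphi_{t_0}(z_0)$ forces $t = t_0$. By compactness of $\mci$ and continuity, there is $\varepsilon>0$ with $\dist(\varphi_t(z_0), \varphi_{t_0}(z_0)) \ge \varepsilon$ for every $t_0 \in \mci$ and $t \in J$. For $a,b \in B$ bounded and $k$ large, $z_{a_k}$ and $z_{b_k,t_0}$ are within $\mco(k^{-1/2})$ of $z_0$ and $\varphi_{t_0}(z_0)$ respectively, so $\dist(\varphi_t(z_{a_k}), z_{b_k,t_0}) \ge \varepsilon/2$ uniformly. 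Estimates (\ref{berg_sup_est}) and (\ref{berg_der_sup_est}) then give $|g(t)|, |g'(t)| = \mco(k^{-\infty})$ uniformly for $t \in J$, $t_0 \in \mci$, $a,b \in B$; the derivative estimate follows as in the derivation of (\ref{der_est}) in Lemma \ref{dist_positive}.

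Finally I would split $J = J_0 \cup J_2$ with $J_0 = [-c/2, c/2]$ (which lies in $J$ by the choice of $\delta$) and $J_2 = J \setminus J_0$. On $J_2$, $|\cot(t/2)|$ is bounded by a constant $M_c$ depending only on $c$, so absolute integrability gives $\int_{J_2} |g \cot| = \mco(k^{-\infty})$. On $J_0$, the principal-value integral simplifies via the same substitution $s = -t$ to
\begin{equation*} \text{p.v.}\int_{J_0} g(t)\cot\!\left(\tfrac t 2\right) dt = \int_0^{c/2} (g(t)-g(-t)) \cot\!\left(\tfrac t 2\right) dt,\end{equation*}
and the integrand is bounded by $2|t| \sup_{|s|\le |t|} |g'(s)| \cdot |\cot(t/2)|$, which (since $t \cot(t/2)$ stays bounded near $0$) is $\mco(k^{-\infty})$ uniformly. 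Summing the contributions from $J_0$ and $J_2$ gives the desired uniform bound on $r_k$.

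The main obstacle is precisely the near-zero part of $J$: the pointwise bound $|g| = \mco(k^{-\infty})$ alone is not enough to absorb the non-integrable pole of $\cot(t/2)$ at $0$, and one is forced to exploit the odd symmetry of $\cot$ together with the derivative estimate (\ref{berg_der_sup_est}). This is exactly the same mechanism used in the proof of Lemma \ref{dist_positive}, so no new analytic input is needed — only a careful bookkeeping of the dependence of the constants on $B$ and $\mci$ to ensure uniformity.
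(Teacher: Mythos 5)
Your proposal is correct and follows essentially the same route as the paper: both rest on the freeness of the action at $z_0\in M_1$ plus compactness to get a uniform lower bound on $\dist(\varphi_t(z_{a_k}),z_{b_k,t_0})$ away from $t=t_0$, the off-diagonal Bergman estimates (\ref{berg_sup_est})--(\ref{berg_der_sup_est}) via Lemma \ref{shift_prop_ker}, and the odd symmetry of $\cot(t/2)$ combined with the mean-value theorem to neutralize the singularity at $t=0$. The only difference is cosmetic bookkeeping: you work with the principal-value integral over $(-\pi,\pi)$ and split off the window around $t_0$ directly, whereas the paper starts from the symmetrized one-sided formula for $\mch_{S^1}(\cdot)(0)$ and recombines it into $\int_\delta^{2\pi-\delta}$ by periodicity before discarding the pieces away from $t_0$.
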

\begin{proof}
Assume that $\mci = [t', t''] \subset (0,2\pi)$. Let $\delta>0$ be small enough so that
\begin{equation*}\left( [0, \delta] \cup [2\pi - \delta, 2\pi]\right) \cap [t'-\delta, t''+ \delta] = \emptyset.\end{equation*}%
There exists $\varepsilon > 0$ such that for every $t \in [0,2\pi]$ and $t_0 \in \mci$, if $|t-t_0| \ge \delta$ then $\dist(z_{0,t}, z_{0, t_0}) > \varepsilon$.
Let $B \subset \RR$ be bounded. There exists $k_0 \in \NN$ such that for every $k \ge k_0$, for every $a, b \in B$ and for every $t \in [0,2\pi]$, $t_0 \in \mci$ such that $|t - t_0| \ge \delta$, it holds that \begin{equation}\label{dist_big_eps}\dist(z_{a_k, t}, z_{b_k, t_0}) > \frac \varepsilon 2.\end{equation}
Thus, the mean-value theorem together with a suitable version of estimate (\ref{der_est}) imply that
\begin{equation*} \sup_{(0, \delta]} \left| \left(V_{k,E_k}(t)(z_{a_k}, z_{b_k,t_0})- V_{k,E_k}(-t)(z_{a_k}, z_{b_k,t_0}) \right) \cot \left(\frac t 2 \right) \right| = \mco(k^{-\infty}),\end{equation*}
and (in light of (\ref{dist_big_eps}), (\ref{berg_der_sup_est})) the estimate is uniform for $a,b \in B$ and $t_0 \in \mci$. Thus,
\begin{multline*} \int_0^\pi \left(V_{k, E_k}(t)(z_{a_k}, z_{b_k,t_0})- V_{k,E_k}(-t)(z_{a_k}, z_{b_k,t_0}) \right) \cot \left(\frac t 2 \right)dt =\\ \int_\delta^\pi \left(V_{k,E_k}(t)(z_{a_k},z_{b_k,t_0}) - V_{k,E_k}(-t)(z_{a_k},z_{b_k,t_0}) \right) \cot \left(\frac t 2 \right)dt +\mco(k^{-\infty})\\ = \int_\delta^{2\pi - \delta} V_{k,E_k}(t)(z_{a_k},z_{b_k,t_0}) \cot \left(\frac t 2 \right)dt + \mco(k^{-\infty}),\end{multline*}
where (in light of (\ref{dist_big_eps}), (\ref{berg_sup_est})) the remainder estimate is uniform for $a,b \in B$ and $t_0 \in \mci$. Finally,
\begin{equation*} \int_\delta^{t_0 - \delta} V_{k,E_k}(t)(z_{a_k},z_{b_k,t_0}) \cot \left(\frac t 2 \right)dt = \mco(k^{-\infty}),\end{equation*}
and again, by (\ref{dist_big_eps}), (\ref{berg_sup_est}), the estimate is uniform for $a, b \in B$, $t_0 \in \mci$. Similarly for the integration over $[t_0 + \delta, 2\pi - \delta]$.
\end{proof}
\begin{corollary}\label{hilb_trans_corollary} Let $S_{k}(z,w) = \sigma(z)^{\otimes k} \otimes (\sigma^*_{\inv}(w))^{\otimes k}$, where $\sigma = \frac{s_{\inv}}{|s_{\inv}|}$, with $s_{\inv}$ a local non-vanishing invariant section of $L$ in a neighborhood of $\mco_{z_0}$. Note that $\Vert X_H\Vert$ is constant in $\mco_{z_0}$. In light of the previous lemma and (\ref{stat_phase_prop_eq}), we conclude that
\begin{gather*} -\left(\frac{2\pi} k \right)^n\mch_{S^1}(V_{k,E_k}(z_{a_k},z_{b_k,t_0}))(0) =\\  \left(e^{-\frac{(a^2 + b^2)}2 \Vert X_H(z_0)\Vert^2}e^{-ik E_kt_0 } \frac {\cot \left(\frac{t_0} 2 \right)} {\Vert X_H(z_0) \Vert\sqrt{\pi k}} +C_k(a, b,t_0)\right)S_k(z_{a_k},z_{b_k,t_0}).\end{gather*}
Here, $C_k( a, b,t_0) = \frac{C^v_k( a, b,t_0)}{2\pi}$, where $C_k^v(a, b,t)$ is as specified in Corollary \ref{stationary_phase_propagator}, with $v(t) =  \cot \left(\frac t 2 \right)$.\end{corollary}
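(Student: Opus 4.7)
The approach is to string together Lemma \ref{on_orbit} with the stationary-phase estimate (\ref{stat_phase_prop_eq}), specialized as in Corollary \ref{stationary_phase_propagator}, applied with the smooth weight $v(t) = \cot(t/2)$. The plan has essentially two steps, both of which are direct invocations of already-established results.

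First, I would invoke Lemma \ref{on_orbit} to rewrite $\mch_{S^1}(V_{k,E_k}(z_{a_k},z_{b_k,t_0}))(0)$, up to a uniform $\mco(k^{-\infty})$ error, as $-\frac{1}{2\pi}\int_{t_0-\delta}^{t_0+\delta} V_{k,E_k}(t)(z_{a_k},z_{b_k,t_0})\cot(t/2)\,dt$. Choosing $\delta > 0$ small enough guarantees that $v(t) = \cot(t/2)$ is smooth on the integration window, since $t_0 \in \mci \subset S^1 \setminus \{0\}$ is bounded away from $0$; this is precisely the hypothesis built into Lemma \ref{on_orbit}.

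Next, I would feed the localized integral into the stationary-phase expansion (\ref{stat_phase_prop_eq}), which, through Corollary \ref{stationary_phase_propagator}, gives the asymptotics of $\int V_{k,E_k}(t)(z_{a_k},z_{b_k,t_0}) v(t)\,dt$ for any $v$ smooth near $t_0$. By Lemma \ref{shift_prop_ker} the integrand is $e^{ik(E-E_k)t}\mct_{k,t}\bigl(\Pi_k(\varphi_t(z_{a_k}), z_{b_k,t_0})\bigr)v(t)$, with the Bergman kernel supplying the oscillatory phase; the critical point lies near $t \approx t_0$, where $\varphi_t(z_{a_k})$ is closest to $z_{b_k,t_0}$. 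The leading term of (\ref{stat_phase_prop_eq}) then produces, in the trivialization $S_k$ determined by the invariant section $\sigma$, the Gaussian factor $e^{-(a^2+b^2)\Vert X_H(z_0)\Vert^2/2}$, the prefactor $1/\bigl(\Vert X_H(z_0)\Vert\sqrt{\pi k}\bigr)$, the phase $e^{-ikE_kt_0}$ (the $e^{ikEt_0}$ from $c_{k,E_k}(z_{a_k},t)$ is absorbed at the critical point because $H(z_0) = E$), and the slow weight at the critical point, $v(t_0) = \cot(t_0/2)$. Multiplying by $-(2\pi/k)^n$ flips the sign introduced by Lemma \ref{on_orbit} and yields exactly the stated main term, while the subleading remainder $C_k(a,b,t_0) = C_k^v(a,b,t_0)/(2\pi)$ is read off directly from Corollary \ref{stationary_phase_propagator} (with $v(t) = \cot(t/2)$) after absorbing the uniform $\mco(k^{-\infty})$ error coming from Lemma \ref{on_orbit}.

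No substantial obstacle is expected here: the corollary is essentially a direct substitution, and the only care needed is bookkeeping of signs and phase factors so that $e^{-ikE_kt_0}$ emerges with the correct sign and $\cot(t_0/2)$ arises as the value at the stationary point of the slow weight.
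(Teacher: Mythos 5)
Your proposal is correct and follows exactly the paper's route: the corollary is obtained by localizing the Hilbert transform via Lemma \ref{on_orbit} and then substituting the stationary-phase estimate (\ref{stat_phase_prop_eq}) of Corollary \ref{stationary_phase_propagator} with $v(t)=\cot\left(\frac t 2\right)$ and $\lambda_k = E_k$, the factor $\frac 1 {2\pi}$ converting $\frac{2v(t_0)}{\Vert X_H(z_0)\Vert}\sqrt{\frac\pi k}$ into the stated $\frac{\cot\left(\frac{t_0}2\right)}{\Vert X_H(z_0)\Vert\sqrt{\pi k}}$ and $C_k = \frac{C_k^v}{2\pi}$. Your bookkeeping of the sign, the phase $e^{-ikE_k t_0}$, and the absorption of the uniform $\mco(k^{-\infty})$ remainder matches the paper's (essentially one-line) argument.
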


In order to complete the proof of Theorem \ref{main_thm}, it remains to establish the off-diagonal estimate for the equivariant Bergman kernel $\Pi_{k,E_k}$, as formulated in Theorem \ref{aux_thm}. Hence, we now turn to address the proof of the latter.
Assume that $\lambda_k$ is an eigenvalue of $\hat H_k$ such that $\lvert\lambda_k - E\rvert = \mco(k^{-1})$. Let
\begin{equation*} \Pi^{\equi}_{k, \lambda_k} = \frac 1 {2\pi} \int_0^{2\pi} V_k(t) e^{- ik\lambda_k t} dt\end{equation*}
denote the orthogonal projection on the eigenspace associated with $\lambda_k$.
\begin{corollary}\label{dist_positive_equiv} Estimate (\ref{equiv_neglig}) in the proof of Lemma \ref{dist_positive} is also valid for $\Pi^{\equi}_{k, \lambda_k}$. Namely, if $z\in M$ and $w \in M \setminus \mco_z$, then $(z,w) \not \in \MS(\Pi^{\equi}_{k,\lambda_k})$. Together with Lemma \ref{eig_space_proj_MS}, we obtain the first item of Theorem \ref{aux_thm}. \end{corollary}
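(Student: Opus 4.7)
The plan is to recycle, verbatim up to a change of parameter, the argument that produced estimate (\ref{equiv_neglig}) in the proof of Lemma \ref{dist_positive}. The key observation is that $\Pi^{\equi}_{k,\lambda_k}$ is the $\lambda_k$-th ``Fourier coefficient'' of the quantum propagator, i.e.
\begin{equation*}
\Pi^{\equi}_{k,\lambda_k}(z,w) = \frac{1}{2\pi}\int_0^{2\pi} V_k(t)(z,w)\, e^{-ik\lambda_k t}\, dt,
\end{equation*}
so the same Schwartz-kernel identity from Lemma \ref{shift_prop_ker} applies, now with the scalar prefactor $e^{ik(H(z)-\lambda_k)t}$, which has modulus $1$. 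Since parallel transport is an isometry, passing to pointwise norms yields
\begin{equation*}
\bigl|\Pi^{\equi}_{k,\lambda_k}(z,w)\bigr| \le \frac{1}{2\pi}\int_0^{2\pi} \bigl|\Pi_k(\varphi_t(z),w)\bigr|\, dt,
\end{equation*}
and in particular the eigenvalue $\lambda_k$ has dropped out entirely.

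Next I would exploit the hypothesis $w \notin \mco_z$. The orbit $\mco_z$ is compact (as the image of $S^1$ under a continuous map), so $\dist(w, \mco_z) > 0$. A standard compactness argument then supplies a neighborhood $\mcn_0 \subset M \times M$ of $(z,w)$ and an $\varepsilon > 0$ such that $\dist(\varphi_t(z'), w') \ge \varepsilon$ for every $(z',w') \in \mcn_0$ and every $t \in S^1$. Feeding the uniform off-diagonal Bergman estimate (\ref{berg_sup_est}) into the inequality above produces
\begin{equation*}
\sup_{(z',w')\in\mcn_0}\bigl|\Pi^{\equi}_{k,\lambda_k}(z',w')\bigr| \le C_{\varepsilon,N}\, k^{-N}
\end{equation*}
for every $N > 0$, which is exactly the statement that $(z,w) \notin \MS(\Pi^{\equi}_{k,\lambda_k})$.

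To close out the first item of Theorem \ref{aux_thm}, I would then combine this with Lemma \ref{eig_space_proj_MS}: that lemma handles the case $H(z_0) \ne E$ or $H(w_0) \ne E$, while the estimate just derived handles the remaining case $H(z_0) = H(w_0) = E$ together with $w_0 \notin \mco_{z_0}$. The union of the two conclusions is exactly the assertion that $\Pi^{\equi}_{k,\lambda_k}$ is negligible at $(z_0, w_0)$ whenever $H(z_0) \ne E$, $H(w_0) \ne E$, or $w_0 \notin \mco_{z_0}$.

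No serious obstacle is anticipated: the entire argument is bookkeeping around the integral representation together with the compactness of the orbit and the pre-existing rapid off-diagonal decay of $\Pi_k$. The only point worth care is verifying that the modulus of the prefactor in Lemma \ref{shift_prop_ker} really is $1$ under the substitution $E_k \mapsto \lambda_k$ (so that the $\lambda_k$-dependence is invisible in the final bound), which is immediate since $k(H(z)-\lambda_k)t \in \RR$.
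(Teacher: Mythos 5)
Your proposal is correct and follows essentially the same route as the paper: the paper's corollary simply re-uses estimate (\ref{equiv_neglig}) from Lemma \ref{dist_positive}, i.e.\ the integral representation $\Pi^{\equi}_{k,\lambda_k}=\frac 1{2\pi}\int_0^{2\pi}V_k(t)e^{-ik\lambda_k t}\,dt$ together with the unimodular prefactor and isometric parallel transport of Lemma \ref{shift_prop_ker}, the uniform distance bound on a neighborhood $\mcn_0$ of $(z,w)$ coming from compactness of the orbit, and the off-diagonal decay (\ref{berg_sup_est}), then combines with Lemma \ref{eig_space_proj_MS} exactly as you do. Your write-up just makes explicit the steps the paper leaves implicit.
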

\begin{corollary} Let $z_0 \in H^{-1}(E)\cap M_1$ and $\mci \subset S^1$ be a closed interval. The same argument as in the proof of Lemma \ref{on_orbit} implies that for every fixed $\delta > 0$ small enough, if $t_0 \in \mci$ then
\begin{equation*} \Pi^{\equi}_{k, \lambda_k}(z_{a_k},z_{b_k,t_0}) = \int_{t_0 - \delta}^{t_0 + \delta} V_{k}(t)(z_{a_k},z_{b_k,t_0}) e^{-i k \lambda_k t} dt + r^{\equi}_k(a, b,t_0).\end{equation*}
Here, $r^{\equi}_k(a,b,t_0) = \mco(k^{-\infty})$ uniformly, as before: for every $B \subset \RR$ bounded and $N > 0$, there exists $c_{B,N} > 0$ such that $|r^{\equi}_k(a,b,t_0)| \le c_{B,N} k^{-N}$ for every $a, b \in B$, $t_0 \in \mci$.

Thus, by (\ref{stat_phase_prop_eq}),
\begin{gather*} \left(\frac{2\pi} k \right)^n\Pi^{\equi}_{k, \lambda_k}(z_{a_k},z_{b_k,t_0}) = \\ \left(e^{-\frac{(a^2 + b^2)}2 \Vert X_H (z_0) \Vert^2}e^{-i  k \lambda_k t_0}\frac {1}{\Vert X_H(z_0) \Vert\sqrt{\pi k} } + C^{\equi}_k( a, b,t_0)\right)S_k(z_{a_k},z_{b_k,t_0}),\end{gather*}
where $C_k^{\equi}(a, b,t_0) = \frac{C^v_k(a, b, t_0)}{2\pi}$, with $C_k^v(a, b,t)$ as specified in Corollary \ref{stationary_phase_propagator}, for $v = 1$ (also, $S_k(z,w)$ is as specified in Corollary \ref{hilb_trans_corollary}).\end{corollary}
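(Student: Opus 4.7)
The plan is to imitate the two-step proof of the second part of Theorem \ref{main_thm}, with the Hilbert-transform integral replaced by the simpler Fourier-coefficient integral
\begin{equation*}
\Pi^{\equi}_{k, \lambda_k}(z,w) = \frac{1}{2\pi}\int_0^{2\pi} V_k(t)(z,w)\, e^{-i k \lambda_k t}\, dt,
\end{equation*}
applied at the Schwartz-kernel level and evaluated at $(z,w) = (z_{a_k}, z_{b_k, t_0})$. The two steps are: localize the $t$-integration to a small window around $t_0$, then read off the leading asymptotics by one-dimensional stationary phase.

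For the localization step, I would repeat the argument of Lemma \ref{on_orbit} verbatim. By Lemma \ref{shift_prop_ker}, the kernel $V_k(t)(z_{a_k}, z_{b_k, t_0})$ equals a unimodular prefactor times $\mct_{k,t}\bigl(\Pi_k(\varphi_t(z_{a_k}), z_{b_k, t_0})\bigr)$, so the off-diagonal Bergman estimate (\ref{berg_sup_est}) kills the integrand uniformly as soon as $\dist(\varphi_t(z_{a_k}), z_{b_k, t_0})$ is bounded away from zero. Given a bounded set $B \ni a, b$ and the compact interval $\mci$, continuity of $\varphi$ and $\psi$ and compactness produce $\delta > 0$, $\varepsilon > 0$, and $k_0$ such that $\dist(\varphi_t(z_{a_k}), z_{b_k, t_0}) > \varepsilon/2$ for all $k \ge k_0$, $a, b \in B$, $t_0 \in \mci$ and all $t \in [0,2\pi]$ with $|t - t_0| \ge \delta$. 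Integrating over the complement of $[t_0 - \delta, t_0 + \delta]$ then delivers the first displayed formula with $r^{\equi}_k(a, b, t_0) = \mco(k^{-\infty})$ uniformly in the required sense.

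For the asymptotic formula, I would feed the remaining integral over $[t_0 - \delta, t_0 + \delta]$ into the stationary-phase asymptotics already encapsulated by Corollary \ref{stationary_phase_propagator}, specialized to $v \equiv 1$. The unique non-degenerate critical point is $t = t_0$, with Hessian $\Vert X_H(z_0)\Vert^2$; combined with the Gaussian transverse profile of the near-diagonal Bergman kernel and the critical-value oscillation $e^{-i k \lambda_k t_0}$, this produces the announced leading term, while the uniform bounds on $C^{\equi}_{k,1}$ and $C^{\equi}_{k,2}$ descend directly from the uniform remainder control built into Corollary \ref{stationary_phase_propagator}. The principal obstacle --- already absorbed by that corollary --- is the geometric bookkeeping of the stationary-phase expansion: identifying the Hessian as $\Vert X_H(z_0)\Vert^2$ and checking that the parallel transport $\mct_{k, t_0}$ collapses cleanly into the reference $S_k(z_{a_k}, z_{b_k, t_0})$, which ultimately reflects the $S^1$-invariance of $\sigma$ along the orbit. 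Here the result reduces to a direct specialization, and combined with Lemma \ref{eig_space_proj_MS} and Corollary \ref{dist_positive_equiv} it completes the verification of Theorem \ref{aux_thm}.
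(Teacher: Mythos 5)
Your proposal follows the paper's own route exactly: localize the Fourier-coefficient integral $\frac{1}{2\pi}\int_0^{2\pi} V_k(t)(z_{a_k},z_{b_k,t_0})e^{-ik\lambda_k t}\,dt$ to a $\delta$-window around $t_0$ by the argument of Lemma \ref{on_orbit} (via Lemma \ref{shift_prop_ker} and the off-diagonal estimate (\ref{berg_sup_est})), then specialize Corollary \ref{stationary_phase_propagator} to $v \equiv 1$ to obtain the leading term and the uniform remainder bounds. This is correct and essentially identical to the paper's proof.
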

\section{Estimates of integrals involving the Bergman kernel}
We begin by noting that $\Pi_k$ admits the representation (\cite{charles}, \cite{lefloch}, Theorem 7.2.1, \cite{hsiaomari}, Theorem 4.11)
\begin{equation}\label{charles_form} \Pi_k(z,w) = \left(\frac k {2\pi} \right)^n u(z,w,k) \mce^{\otimes k}(z,w) + R_k(z,w),\end{equation}
where $\mce \in C^\infty(M \times M, L \boxtimes L^*)$, $u(\cdot, \cdot, k) \in C^\infty(M \times M)$ is real valued, and $R_k$ have the following properties.
\begin{itemize}

\item{The section $\mce$ satisfies $|\mce(z,w)| < 1$ for $z \ne w$, and\footnote{Here, we use the Hermitian metric to make the identification $L_z \otimes L^*_z \simeq \CC$.} $\mce(z,z) = 1$.}

\item{The function $u(\cdot, \cdot,k)$ is real-valued, and $u(\cdot, \cdot, k) \sim \sum_{l =0}^\infty k^{-l} u_l(\cdot, \cdot)$ in the $C^\infty$-topology\footnote{This means that for every $N \ge 0$, the function $u(\cdot, \cdot, k) - \sum_{l=0}^N k^{-l} u_l(\cdot, \cdot)$ and all its derivatives are uniformly $\mco(k^{-(N+1)})$.}, with $u_0(z, z) \equiv 1$.}

\item{$R_k = \mco(k^{-\infty})$ uniformly in $(z,w)$.}

\end{itemize}
Recall that $\{\varphi_t\}_{t \in [0,2\pi]}$ and $\{\psi_a\}_{a \in \RR}$ denote the Hamiltonian flow and gradient flow associated with $H$. The two flows commute, and define a $\CC^*$-action on $M$ (the gradient flow also consists of biholomorphisms). As before, given $z_0 \in M$, if $\alpha \in \RR$ and $t \in S^1$, then we denote $$z_\alpha = \psi_\alpha(z_0),\ z_{\alpha, t} =  \varphi_t(z_\alpha).$$
\begin{lemma}\label{non_deg_lemma} Fix $z_0 \in M$. Let $s_{\inv}$ be a a local invariant holomorphic section of $L$ in a neighborhood of $\mco_{z_0}$. Let $S(z,w) = \sigma(z) \otimes \sigma^*(w)$, where $\sigma = \frac{s_{\inv}}{\lvert s_{\inv}\rvert}$. For $\delta > 0$ small enough, define $$g_{z_0} : (-\delta, \delta)\times (-\delta, \delta) \times (-\delta, \delta)\times S^1 \to \CC$$ by
\begin{equation}\label{E_parallel_transport}  \mce(z_{a,t+t'}, z_{b,t'}) = e^{ig_{z_0}(t,a,b,t')} S(z_{a,t+t'},z_{b,t'}).\end{equation} Let $t_0 \in S^1$. Then
\begin{align*} &g_{z_0}(0,t_0) = 0,\\ &\partial_{t} g_{z_0}(0,t_0) = H(z_0), \partial_a g_{z_0}(0,t_0) = \partial_b g_{z_0}(0,t_0) = 0,\\ &\partial_{t}^2 g_{z_0}(0,t_0) =  \partial_a^2 g_{z_0}(0,t_0) = \partial_b^2 g_{z_0}(0,t_0) = - \partial_a \partial_b g_{z_0}(0,t_0) = \frac i 2 \Vert X_H \Vert^2,\\ &\partial_a \partial_{t} g_{z_0}(0,t_0) = \partial_b \partial_{t} g_{z_0}(0,t_0) = \frac 1 2 \Vert X_H \Vert^2.\end{align*}\end{lemma}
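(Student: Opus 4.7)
The plan is to reduce the problem to a one-variable Taylor expansion along the $\CC^*$-orbit of $z_0$.

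The first observation is that $g_{z_0}(t, a, b, t')$ is independent of $t'$. Since $s_{\inv}$ is an $S^1$-invariant holomorphic section, both $S = \sigma \otimes \sigma^*$ and $\mce$ (the latter being canonically constructed from the Kähler data) transform equivariantly under the lifted diagonal $S^1$-action on $L \boxtimes L^*$. Their ratio $e^{ig_{z_0}}$ is therefore invariant under $(z, w) \mapsto (\varphi_{t_0}(z), \varphi_{t_0}(w))$, which is precisely the map $t' \mapsto t' + t_0$. The identity $g_{z_0}(0, t_0) = 0$ is then immediate from $\mce(z_0, z_0) = S(z_0, z_0) = 1$.

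Next, I would introduce holomorphic coordinates $(\zeta, w_2, \ldots, w_n)$ near $\mco_{z_0}$ in which the holomorphic $\CC^*$-action (the commuting composition of $\psi_a$ and $\varphi_t$) acts by translation in $\zeta$, so that $z_{a, t} = (a + it, 0)$, and the Kähler potential $\phi := -\log|s_{\inv}|^2$ depends only on $r = \mathrm{Re}\,\zeta$ and on $(w, \bar w)$. Writing $\mce(z, w) = M(z, w)\, s_{\inv}(z) \otimes s_{\inv}^*(w)$, the standard Bergman-kernel parametrix (cf. \cite{charles, hsiaomari}) provides the normal form $M(z, w) = \exp\bigl(\tilde\phi(z, \bar w) - \phi(w)\bigr)$ modulo errors vanishing to infinite order on the diagonal, where $\tilde\phi$ is the almost-holomorphic extension of $\phi$ (so $\tilde\phi(z, \bar z) = \phi(z)$). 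Combined with the identity $S(z, w) = e^{(\phi(z)-\phi(w))/2}\, s_{\inv}(z) \otimes s_{\inv}^*(w)$, this yields
\[ ig_{z_0}(z, w) = \tilde\phi(z, \bar w) - \tfrac12 \phi(z) - \tfrac12 \phi(w), \]
accurate to order three on the diagonal, which suffices for first and second derivatives at the origin.

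Restricting to the orbit ($w = 0$) and inserting the parameterization, the formula becomes
\[ ig_{z_0}(t, a, b, t') = \phi_\CC\!\bigl(\tfrac{a + b + it}{2}\bigr) - \tfrac12 \phi(a) - \tfrac12 \phi(b), \]
with $\phi_\CC$ the holomorphic extension of the restriction of $\phi$ to the slice $w = 0$, manifestly independent of $t'$. A direct Kähler computation in the log coordinate $\zeta$ gives the identifications $H(z_0) = \tfrac12 \phi'(0)$ and $\|X_H(z_0)\|^2 = \tfrac12 \phi''(0)$ along the orbit (using the prequantization relation $\nabla_{X_H} s_{\inv} = -iH\, s_{\inv}$ at $z_0$ to fix the additive constant left free by $i_{X_H}\omega = dH$). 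Substituting and Taylor-expanding the one-variable expression at $(t, a, b) = (0, 0, 0)$ produces all seven asserted identities.

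The main obstacle is not the differentiation, which is elementary, but the extraction of the explicit normal form for $M$ and the careful matching of sign conventions between the curvature identity $R_\nabla = -i\omega$, the lift of the $S^1$-action to $L^{\otimes k}$, and the sign in $i_{X_H}\omega = dH$.
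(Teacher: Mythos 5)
Your proposal is correct, but it takes a genuinely different route from the paper. The paper never writes down a potential: it introduces the $1$-form $\alpha_\mce$ defined by $\tilde\nabla\mce=-i\alpha_\mce\otimes\mce$, gets the first derivatives from the vanishing of $\alpha_\mce$ on the diagonal together with the invariance of $s_{\inv}$ (which produces the $H(z_0)$ term), and gets all second derivatives from Le Floch's Lemma 7.1.3, which expresses $\mcl_X(\alpha_\mce(\cdot))$ on the diagonal as $\tilde\omega(q(\cdot),\cdot)$ with $q$ the projection onto $T^{0,1}(M\times M)$ along $T\Delta_M\otimes\CC$; the seven identities then follow from a short linear-algebra computation with $q(X,0)$, $q(0,X)$ and $\grad H=-jX_H$. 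You instead use the polarized-potential normal form of $\mce$ relative to the holomorphic frame $s_{\inv}$, restrict to the $\CC^*$-orbit of $z_0$, and reduce everything to a one-variable Taylor expansion of $\phi_\CC\bigl(\tfrac{a+b+it}2\bigr)-\tfrac12\phi(a)-\tfrac12\phi(b)$; with $H=\tfrac12\partial_r\phi$ and $\Vert X_H\Vert^2=\tfrac12\partial_r^2\phi$ on the orbit this indeed reproduces all seven values (and in fact gives the $t'$-independence for free, and would give higher-order jets along the orbit at no extra cost), which is a genuine advantage in concreteness. What your route costs is some care the paper's route avoids: (i) the identification of Charles'/Le Floch's $\mce$ with $e^{\tilde\phi(z,\bar w)-\frac12\phi(z)-\frac12\phi(w)}S(z,w)$ is only an equality of jets along the diagonal, justified by uniqueness of almost-holomorphic extensions — this is exactly enough here since you evaluate at diagonal points, but it should be said, and likewise the claimed $S^1$-equivariance of $\mce$ holds only at the level of these jets; (ii) you do not need the full slice coordinates $(\zeta,w_2,\dots,w_n)$, only the restriction to the orbit plus the fact that the restriction of the polarized potential equals the polarization of the restricted potential; (iii) the relations $H=\tfrac12\phi'$, $\Vert X_H\Vert^2=\tfrac12\phi''$ and the parametrization $z_{a,t}\leftrightarrow a+it$ (valid because $\grad H=-jX_H$ is exactly the radial generator of the $\CC^*$-action) depend on fixing the sign conventions linking $R_\nabla=-i\omega$, $i_{X_H}\omega=dH$ and $\nabla_{X_H}s_{\inv}=-iHs_{\inv}$, which you rightly flag as the delicate bookkeeping step; with a consistent choice the computation matches the lemma exactly.
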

\begin{proof}
First, $\mce(\varphi_{t_0}(z_0), \varphi_{t_0}(z_0))  = S(\varphi_{t_0}(z_0),\varphi_{t_0}(z_0))$, hence $g_{z_0}(0,t_0) = 0$.

Let $\tilde \nabla$ denote the connection on $L \boxtimes L^* \to M \times M$ induced from $\nabla$. Then
\begin{multline*}  \tilde{\nabla}_{(X_H, 0)} \mce (z_{a,t+t_0}, z_{b,t_0}) =\\ i\partial_t g_{z_0}(t,a,b,t_0)\mce(z_{a,t+t_0}, z_{b,t_0}) \\+ e^{i g_{z_0}(t, a, b,t_0)} \tilde \nabla_{(X_H, 0)} S(z_{a,t+t_0}, z_{b,t_0}) \\=i\left( \partial_{t} g_{z_0}(t,a, b,t_0)- H(z_a)\right) \mce(z_{a,t+t_0}, z_{b,t_0}),\end{multline*}
where we used the invariance of $s_{\inv}$, and the fact that $\varphi_{t_0}$ preserves level sets of $H$.

Let $\alpha_\mce$ be the 1-form defined in a neighborhood of the diagonal $\Delta_M \subset M \times M$ by the equation \begin{equation}\label{alpha_E_def}\tilde \nabla \mce = -i\alpha_\mce \otimes \mce.\end{equation}
Then
\begin{equation*} i\left( \partial_t g_{z_0}(t, a, b,t_0)- H(z_a)\right)  = - i \alpha_\mce(X_H,0)(z_{a,t+t_0}, z_{b,t_0}),\end{equation*}
that is,
\begin{equation*} \partial_t g_{z_0}(t, a, b,t_0) = H(z_a) - \alpha_{\mce}(X_H, 0)(z_{a,t+t_0}, z_{b,t_0}).\end{equation*}
However, $\alpha_\mce$ vanishes on $\Delta_M$ (\cite{lefloch}, Lemma 7.1.3), hence $\partial_tg_{z_0}(0,t_0) = H(z_0)$. Similarly (noting that $\nabla_{\grad H} \sigma = 0$),
\begin{gather*} \partial_a g_{z_0}(t, a, b,t_0) = - \alpha_{\mce}(\grad H, 0)(z_{a,t+t_0}, z_{b,t_0}),\\ \partial_b g_{z_0}(t,a,b,t_0) = -\alpha_{\mce}(0, \grad H)(z_{a,t+t_0}, z_{b,t_0}),\end{gather*}
and since $\alpha_{\mce}$ vanishes on $\Delta_M$, we obtain $\partial_a g_{z_0}(0,t_0) = \partial_b g_{z_0}(0,t_0) = 0$.

Next, we note that
\begin{align*} &\partial_t^2 g_{z_0} = -\mcl_{(X_H, 0)}\alpha_{\mce}(X_H, 0),\ \partial_a^2 g_{z_0} = - \mcl_{(\grad H, 0)} \alpha_{\mce}(\grad H, 0),\\ &\partial_b^2 g_{z_0} = - \mcl_{(0, \grad H)} \alpha_{\mce}(0, \grad H), \partial_a \partial_b g_{z_0} = - \mcl_{(\grad H, 0)} \alpha_{\mce}(0, \grad H),\\ & \partial_t \partial_a g_{z_0} = - \mcl_{(X_H, 0)}\alpha_{\mce}(\grad H, 0),\ \partial_t \partial_b g_{z_0} = -\mcl_{(X_H, 0)} \alpha_{\mce}(0, \grad H).\end{align*}
Now, as shown in \cite{lefloch}, Lemma 7.1.3, there exists a smooth section $B_{\mce}$ of the bundle $T^*(M \times M) \otimes T^*(M \times M) \otimes \CC \to \Delta_M$ such that for vector fields $X,Y$ on $M \times M$,
\begin{equation*} \mcl_{X}(\alpha_\mce(Y)) = B_\mce(Y, Y) = \tilde \omega(q(Y_H), Y_H)\end{equation*}
on $\Delta_M$. Here, $q$ is the projection from $T_{(w,w)}(M \times M) \otimes \CC$ onto $T_{(w,w)}^{0,1}(M \times M)$ with kernel $T_{(w,w)}\Delta_M \otimes \CC$, and $\tilde \omega = \pi_1^* \omega - \pi_2^* \omega$ is the K\"{a}hler form on $M \times M$. We can compute $q(X,0)$ and $q(0, X)$ explicitly, and obtain
\begin{equation*} q(X,0) = \frac 1 2 \left(X +i j X, -X + ij X\right),\ q(0, X) = \frac 1 2 \left(-X + ij X, X + ij X \right)\end{equation*}
where $j$ is the complex structure on $M$. Thus, a straightforward computation produces
\begin{align*} \partial^2_t g_{z_0}(0,t_0) = - \tilde \omega(q(X_H, 0), (X_H, 0)) = - \frac 1 2 \omega(i j X_H, X_H) = \frac i 2 \Vert X_H \Vert^2,\end{align*}
and noting that $\grad H = - j X_H$,
\begin{equation*} \partial_t \partial_a g_{z_0}(0,t_0) = - \tilde \omega(q(X_H, 0), (\grad H, 0)) = - \frac 1 2 \omega(X_H, -j X_H) = \frac 1 2 \Vert X_H \Vert.\end{equation*}
The rest of the cases are computed in the same way.
\end{proof}
Using Lemma \ref{non_deg_lemma}, we can apply the stationary phase approximation in order to estimate integrals (along $S^1$-trajectories) which involve the Bergman kernel. In light of Lemma \ref{N>1}, we assume that $N = 1$ (i.e., there exists an open dense $M_1 \subset M$ such that the stabilizer group of every $z \in M_1$ is trivial).
\begin{corollary}\label{stationary_phase_propagator} Let $\lambda_k$ be an eigenvalue of $\hat H_k$ such that $|\lambda_k - E| = \mco(k^{-1})$, where $E \in H(M)$ is a regular value. Fix $z_0 \in H^{-1}(E) \cap M_1$. Let $\mci \subset S^1$ be a closed interval. Let $v$ be a smooth function, compactly supported in a neighborhood of $\mci$, such that $v|_{\mci} \ne 0$. Fix $a, b \in \RR$. For $\delta > 0$ small enough and $t_0 \in \mci$, denote
\begin{equation*} I_{k, \delta}^{v,\lambda_k}(\alpha, \beta,t_0) =\left(\frac{2\pi} k \right)^n \int_{t_0 - \delta}^{t_0 + \delta} v(t) V_{k, \lambda_k}(t)(z_{\alpha},z_{\beta, t_0}) dt,\end{equation*}
where $V_{k,\lambda_k}(t) = e^{-ik \lambda_k t} V_k(t)$. Write $a_k = \frac a {\sqrt k}$, $b_k = \frac b {\sqrt k}$. Then
\begin{equation}\label{stat_phase_prop_eq}\begin{gathered} I^{v,\lambda_k}_{k, \delta}\left(a_k,b_k,t_0\right) = \mck^{v,\lambda_k}_{k}(a, b,t_0) S_k(z_{a_k},z_{b_k, t_0}),\end{gathered}\end{equation}
with $S_k$ as specified in Theorem \ref{main_thm}, and
\begin{equation*} \begin{gathered}\mck^{v,\lambda_k}_{k}(a, b,t_0) = \\e^{-\frac{(a^2 + b^2)\Vert X_H(z_0)\Vert^2}2 }e^{-i k\lambda_k  t_0}  \frac{2 v(t_0)}{\Vert X_H(z_0) \Vert} \sqrt{\frac \pi k} + C_k^v(a, b,t_0).\end{gathered} \end{equation*}
Here, 
\begin{equation*} C^v_k(a,b,t) = \frac{C^v_{k,1}(a,b,t)}k + \frac{C^v_{k,2}(a,b,t)}{k^{\frac 3 2}},\end{equation*}
where $C^v_{k,1}(a,b,t)$, $C^v_{k,2}(a,b,t)$ satisfy that for any bounded set $B \subset \RR$ there exist $c^v_{B,1}, c^v_{B,2} > 0$ such that $$|C^v_{k,1}(a,b,t)| \le (|a|+|b|)c^v_{B,1},\ |C^v_{k,2}(a,b,t)| \le c^v_{B,2}$$ for all $a, b \in B$ and $t \in \mci$.\end{corollary}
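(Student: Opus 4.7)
The plan is to reduce $I^{v, \lambda_k}_{k,\delta}$ to a one-dimensional oscillatory integral in $s = t - t_0$ and apply the complex stationary phase lemma. First, I use Lemma \ref{shift_prop_ker} (with $E_k$ replaced by $\lambda_k$, which alters only the prefactor) to write
\begin{equation*} V_{k,\lambda_k}(t)(z_{a_k}, z_{b_k, t_0}) = e^{ik(H(z_{a_k}) - \lambda_k)t}\, \mct_{k,t}\bigl(\Pi_k(\varphi_t(z_{a_k}), z_{b_k, t_0})\bigr). \end{equation*}
Substituting the expansion (\ref{charles_form}) of $\Pi_k$ contributes a remainder $R_k = \mco(k^{-\infty})$ which is absorbed into $C^v_k$. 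Changing variables $t = t_0 + s$ and invoking Lemma \ref{non_deg_lemma} with $t' = t_0$ replaces $\mce^{\otimes k}(\varphi_{t_0+s}(z_{a_k}), z_{b_k, t_0})$ by $e^{ikg_{z_0}(s, a_k, b_k, t_0)} S^{\otimes k}(z_{a_k, t_0+s}, z_{b_k, t_0})$. The $S^1$-invariance of $s_{\inv}$ implies that $\mct_{k, t_0+s}$ sends $\sigma^{\otimes k}(\varphi_{t_0+s}(z_{a_k}))$ to $e^{-ikH(z_{a_k})(t_0+s)}\, \sigma^{\otimes k}(z_{a_k})$, so the two occurrences of $e^{\pm ikH(z_{a_k})(t_0+s)}$ cancel; the prefactors $(k/2\pi)^n$ and $(2\pi/k)^n$ also cancel. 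The result is
\begin{equation*} I^{v, \lambda_k}_{k, \delta}(a_k, b_k, t_0) = \left(\int_{-\delta}^{\delta} v(t_0+s)\, u(\varphi_{t_0+s}(z_{a_k}), z_{b_k, t_0}, k)\, e^{ik\Phi_k(s, a, b)}\, ds\right) S_k(z_{a_k}, z_{b_k, t_0}) + \mco(k^{-\infty}), \end{equation*}
with phase $\Phi_k(s, a, b) = g_{z_0}(s, a/\sqrt k, b/\sqrt k, t_0) - \lambda_k(t_0+s)$.

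Second, I Taylor expand $\Phi_k$ about $s=0$ using Lemma \ref{non_deg_lemma}. The quadratic coefficient $\partial_s^2 g_{z_0}(0, t_0) = i\Vert X_H(z_0)\Vert^2 / 2$ produces the dominant Gaussian $e^{-(k/4)\Vert X_H(z_0)\Vert^2 s^2}$ with a unique non-degenerate complex critical point $s_c$ of size $\mco((\lvert a\rvert+\lvert b\rvert)/\sqrt k)$ (shifted from $s=0$ by the linear-in-$s$ terms $k(E - \lambda_k) + (\sqrt k/2)\Vert X_H(z_0)\Vert^2(a+b)$). Completing the square in $s$ and applying the identity
\begin{equation*} -\tfrac 1 4 (a^2+b^2) + \tfrac 1 2 ab - \tfrac 1 4 (a+b)^2 = -\tfrac 1 2 (a^2+b^2) \end{equation*}
(each coefficient multiplied by $\Vert X_H(z_0)\Vert^2$) combines the Gaussian factors from $\partial_a^2, \partial_b^2, \partial_a\partial_b\, g_{z_0}$ at $(0,t_0)$ with the $-(a+b)^2/4$ factor from the critical-point shift, producing the claimed $e^{-(a^2+b^2)\Vert X_H(z_0)\Vert^2/2}$. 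The Gaussian integration in $s$ contributes $(2/\Vert X_H(z_0)\Vert)\sqrt{\pi/k}$, and the values $v(t_0)$, $u_0(z_0, z_0) = 1$, and $e^{-ik\lambda_k t_0}$ combine to yield the leading term.

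The main technical obstacle is the bookkeeping for the error $C_k^v = C_{k,1}^v/k + C_{k,2}^v/k^{3/2}$, with the sharper bound $\lvert C_{k,1}^v\rvert \le (\lvert a\rvert+\lvert b\rvert)\, c^v_{B,1}$. Two sources contribute. First, evaluating the amplitude $A = v \cdot u$ at the displaced critical point $s_c$ rather than at $s=0$ yields a correction $A(s_c) - A(0) = A'(0)\, s_c + \mco(s_c^2) = \mco((\lvert a\rvert + \lvert b\rvert)/\sqrt k)$, which multiplied by the Gaussian prefactor $\sqrt{\pi/k}$ gives the $(\lvert a\rvert+\lvert b\rvert)/k$ term in $C_{k,1}^v/k$. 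Second, the standard next-order correction of the complex stationary phase lemma is of size $\sqrt{\pi/k} \cdot \mco(1/k) = \mco(k^{-3/2})$; it arises from cubic-and-higher pure-$s$ Taylor terms of $g_{z_0}$ and from the $k^{-1}$ coefficient $u_1$ in the expansion of $u$, and carries no $(\lvert a\rvert+\lvert b\rvert)$ factor, matching the form of $C_{k,2}^v/k^{3/2}$. Uniformity in $(a, b, t_0) \in B \times B \times \mci$ follows from the standard parametric stationary phase remainder estimates together with the smoothness and compactness of the relevant domains.
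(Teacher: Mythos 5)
Your proposal is correct and follows essentially the same route as the paper: you use Lemma \ref{shift_prop_ker}, the expansion (\ref{charles_form}), the invariance of $s_{\inv}$, and the Hessian data of Lemma \ref{non_deg_lemma} to reduce to a complex stationary phase integral in the $t$-variable, and your completing-the-square identity reproduces exactly the paper's evaluation of the critical value $f^0_{z_0}(a_k,b_k,t_0)=\frac{i}{2k}(a^2+b^2)\Vert X_H(z_0)\Vert^2+\mco(k^{-\frac 32})$, with the same two error sources at orders $(|a|+|b|)k^{-1}$ and $k^{-\frac 32}$. The only difference is presentational: the paper keeps $(a,b)$ as parameters and Taylor-expands the residue-class quantities $f^0$, $v_k^0$ from H\"ormander's Theorem 7.7.12, whereas you fold the linear-in-$s$ terms into a displaced complex critical point, which the same lemma justifies.
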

\begin{proof}
According to Lemma \ref{shift_prop_ker},
\begin{equation*} V_{k,\lambda_k}(t)(z_{\alpha},z_{\beta, t_0}) = c_{k,\lambda_k}(z_{\alpha},t) \mct_{k,t}\Pi_k(z_{\alpha, t}, z_{\beta, t_0}),\end{equation*}
where $c_{k,\lambda_k}(z,t) = e^{ik(H(z) - \lambda_k) t}$. Then using (\ref{charles_form}),
\begin{gather*} \mct_{k,t} \Pi_k(z_{\alpha, t}, z_{\beta, t_0})=\\ \left(\frac k {2\pi} \right)^n u(z_{\alpha, t}, z_{\beta, t_0}, k) \mct_{k,t}\left(\mce^{\otimes k}(z_{\alpha, t},z_{\beta, t_0})\right).\end{gather*}
The invariance of $s_{\inv}$ implies that $\mct_{k,t} S_k(\varphi_t(z), w) = e^{-ik H(z)t} S_k(z,w)$. Thus, in the notations of Lemma \ref{non_deg_lemma},
\begin{equation*} \mct_{k,t} \left(\mce^{\otimes k}(z_{\alpha, t},z_{\beta, t_0} )\right) = e^{ik( g_{z_0}(t-t_0,\alpha,\beta,t_0)-H(z_{\alpha})t)} S_k(z_{\alpha}, z_{\beta, t_0}).\end{equation*}
Hence,
\begin{equation*}  I^{v,\lambda_k}_{k, \delta}(a_k,b_k,t_0) =\left(\int_{t_0 - \delta}^{t_0 + \delta} \tilde v_k(t, a_k, b_k,t_0)e^{ikg_{z_0}(t-t_0, a_k,b_k,t_0)} dt\right) S_k(z_{a_k},z_{b_k, t_0}),\end{equation*}
with
\begin{equation*} \tilde v_k(t, \alpha, \beta,t_0) = v(t)e^{-i k \lambda_k t} u(z_{\alpha, t},z_{\beta, t_0}, k) .\end{equation*}
Next, applying a change of variables,
\begin{gather*} \int_{t_0 - \delta}^{t_0 + \delta} \tilde v_k(t,a_k,b_k,t_0)e^{ikg_{z_0}(t-t_0,a_k,b_k,t_0)} dt \\= e^{-i k \lambda_k t_0} \int_{-\delta}^\delta  v_k(t, a_k, b_k,t_0) e^{i k f_{z_0}(t,a_k, b_k,t_0)} dt,\end{gather*}
where (noting that $H(z_0) = E$)
\begin{align*} &v_k(t,\alpha,\beta,t_0) = v(t+t_0) e^{i k(E - \lambda_k)t} u(z_{\alpha, t+t_0}, z_{\beta, t_0},k),\\ &f_{z_0}(t, \alpha, \beta,t') = g_{z_0}(t, \alpha, \beta,t') - E t.\end{align*}

Now, $\partial_t f_{z_0}(0,t_0) = 0$ and $\partial_t^2 f_{z_0}(0,t_0)\ne 0$ by Lemma \ref{non_deg_lemma}, and we can assume without loss of generality that $t=0$ is the unique such point in $(-\delta,\delta)$. Also, the derivatives of $v_k$ are bounded as $k \to \infty$ (since $|E - \lambda_k| = \mco(k^{-1})$). Hence, we can apply the stationary phase lemma for a complex valued phase (\cite{hormander}, Theorem 7.7.12), and obtain
\begin{equation*}\begin{aligned}&\int_{-\delta}^\delta v_k(t, a_k, b_k,t_0) e^{ik f_{z_0}(t,a_k, b_k,t_0)}dt \\&= \left(\frac{2 \pi i}{k \left(\partial_t^2 f_{z_0}\right)^0(a_k, b_k,t_0)}\right)^{\frac 1 2}e^{i kf_{z_0}^0(a_k, b_k,t_0)} v_k^0(a_k, b_k,t_0) + \mco(k^{-\frac 3 2}).  \end{aligned}\end{equation*}
Here, for a function $F(t,a,b,t')$, the notation $F^0(a,b,t')$ stands for a function of $a,b,t'$ only which belongs to the same residue class as $F$ modulo the ideal generated by $\partial_{t} F$ (i.e., $F - F^0= G \partial_{t}F$ for some function $G$). Also, the square root is defined such that its real part is non-negative. Finally, the $\mco(k^{-\frac 3 2})$ estimate is uniform for $a,b$ in bounded sets and $t_0 \in \mci$.

The derivatives of $v_k^0$ are bounded as $k \to \infty$, hence (using Taylor's theorem)
\begin{align*} v_k^0(a_k, b_k,t_0) &= v_k^0(0,t_0) + \frac{c_0(k,a,b,t_0)}{\sqrt k} \\ &= v_k(0,t_0) +  \frac{c_0(k,a,b,t_0)}{\sqrt k} \\ &= v(t_0) + \frac{c_0(k,a,b,t_0)}{\sqrt k}+\mco(k^{-1})\end{align*}
for some $c_0(k,a,b,t_0)$ satisfying that for every bounded set $B \subset \RR$ there exists $c_{0,B} > 0$ such that $|c_0(k,a,b,t_0)| \le (|a|+|b|) c_{0,B}$ for all $a,b \in B$, $t_0 \in \mci$. Also, the $\mco(k^{-1})$ estimate is uniform for $a, b \in B$ and $t_0 \in \mci$.

Similarly,
\begin{align*}  &\left(\frac{2 \pi i}{k \left(\partial_t^2 f_{z_0}\right)^0(a_k, b_k,t_0)}\right)^{\frac 1 2} =  \left(\frac{2 \pi i}{k \left(\partial_t^2 f_{z_0}\right)^0(0,t_0)}\right)^{\frac 1 2} +  \frac{c_1(k,a,b,t_0)}{k}\\ &= \frac 2 {\Vert X_H\Vert} \sqrt{\frac \pi k} +  \frac{c_1(k,a,b,t_0)}{ k}.\end{align*}
Again, $c_1(k,a,b,t_0)$ satisfies that for every bounded set $B \subset \RR$ there exists $c_{1,B} > 0$ such that $|c_1(k,a,b,t_0)| \le (|a|+|b|) c_{1,B}$ for all $a,b \in B$, $t_0 \in \mci$. 

Finally, we may choose (see \cite{hormander}, 7.7.16, and the succeeding paragraph)
\begin{equation*} f^0_{z_0}(a,b,t') = f_{z_0}(0,a,b,t')-q(0,a,b,t')T(a,b,t')^2,\end{equation*}
where $T$ such that $T(0,  t_0) = 0$ for every $t_0 \in \mci$, and $q$ is some smooth function. It follows that
\begin{align*} &f^0_{z_0}(0,t_0) = f_{z_0}(0,t_0) = 0,\\ &\partial_a f^0_{z_0}(0,t_0) = \partial_a f_{z_0}(0,t_0) = 0 = \partial_b f_{z_0}(0,t_0) = \partial_b f^0_{z_0}(0,t_0).\end{align*}
Also,
\begin{align*} &\partial_a^2 f^0_{z_0}(0,t_0) = \partial_a^2 f_{z_0}(0,t_0) -2 q(0,t_0) (\partial_a T(0,t_0))^2,\\ &\partial_a \partial_b f^0_{z_0}(0,t_0) = \partial_a\partial_b f_{z_0}(0,t_0) -2 q(0,t_0) \partial_b T(0,t_0)\partial_a T(0,t_0),\\ &\partial_b^2 f^0_{z_0}(0,t_0) = \partial_b^2 f_{z_0}(0,t_0) -2 q(0,t_0) (\partial_b T(0,t_0))^2,\end{align*}
with
\begin{align*} &2q(0,t_0) = \partial_t^2 f_{z_0}(0,t_0),\\ &\partial_a T(0,t_0) = -\frac{\partial_a \partial_t f_{z_0}(0,t_0)}{\partial_t^2 f_{z_0}(0,t_0)},\ \partial_bT(0,t_0) = -\frac{\partial_b \partial_t f_{z_0}(0,t_0)}{\partial_t^2 f_{z_0}(0,t_0)}.\end{align*} Thus, noting Lemma \ref{non_deg_lemma}, a straightforward computation produces
\begin{align*} f^0_{z_0}(a_k,b_k,t_0) &= f^0_{z_0}(0,t_0) + a_k \partial_a f^0_{z_0}(0, t_0) + b_k \partial_b f^0_{z_0}(0,t_0)\\ &+ \frac{a_k^2}2 \partial_a^2 f^0_{z_0}(0,t_0) + a_k b_k \partial_a \partial_b f^0_{z_0}(0,t_0)\\ &+ \frac{b_k^2}2 \partial_b^2 f^0_{z_0}(0,t_0) + \frac{c_2(k,a,b,t_0)}{k^{\frac 3 2}}\\ &= \frac i {2k}(a^2 + b^2) \Vert X_H \Vert^2 + \frac{c_2(k,a,b,t_0)}{k^{\frac 3 2}}.\end{align*}
As before, $c_2(k,a,b,t_0)$ satisfies that for every bounded set $B \subset \RR$ there exists $c_{2,B} > 0$ such that $|c_2(k,a,b,t_0)| \le (|a|+|b|)c_{2,B}$ for all $a, b \in B$, $t_0 \in \mci$. Putting everything together, we obtain the required.
\end{proof}
\section{Concluding remarks}\label{conc_rks}
The spectral projections addressed in Theorem \ref{main_thm} (the main result of this work) are analogues of so-called \textit{Melrose-Uhlmann projections} \cite{guillemin_lerman}, which are defined in the framework of pseudodifferential quantization. Melrose-Uhlmann projections on $S^1$-symmetric manifolds are closely related to the quantization of symplectic cuts (\cite{lerman, guillemin_lerman}). More generally, they are instances of operators whose Schwartz kernels are \textit{distributions of Melrose-Uhlmann type} (\cite{melrose_uhlmann}); these are distributions associated with (suitable) pairs of intersecting Lagrangian submanifolds, and they admit a symbol calculus. It would be interesting to study whether an analogous theory exists in the framework of geometric quantization.

Finally, we note that our study of partial Bergman kernels has two obvious shortcomings. First, the main result (Theorem \ref{main_thm}) only addresses the case of Hamiltonians generating holomorphic circle actions; we except that more robust methods would make it possible to establish a version of Theorem \ref{main_thm} which is valid in greater generality (cf. \cite{zz2}). Second, the results presented in this work are all local; it would be desirable to obtain a uniform description of partial Bergman kernels (it is not obvious that such a description exists), as has been done for Schwartz kernels of Toeplitz operators and of quantum propagators (\cite{charlesbohr, charles_floch}).
\subsubsection*{Acknowledgements}
This research has been supported by the DFG funded project SFB/TRR 191 (Project-ID 281071066-TRR 191) "Symplectic Structures in Geometry, Algebra and Dynamics", and the ANR-DFG project QuaSiDy (Project-ID ANR-21-CE40-0016) "Quantization, Singularities, and Holomorphic Dynamics" . I wish to express my sincere gratitude to the ANR and to the DFG.

I wish to thank X. Ma and G. Marinescu for many helpful discussions and for their valuable comments on an earlier version of this work. I also wish to thank L. Charles, C.-Y. Hsiao, N. Savale and A. Uribe for interesting and useful discussions on the topics addressed in this work.

\textsc{Ood Shabtai\\ \\
Université Paris Cité\\
CNRS, IMJ-PRG, B\^{a}timent Sophie Germain, UFR de Math\'{e}matiques
Case 7012, 75205 Paris CEDEX 13\\
France\\
}
\textit{E-mail address}: shabtai@imj-prg.fr\\ \\
\textsc{AND\\ \\}
\textsc{Université de Lille\\
Laboratoire de Mathématiques Paul Painlevé\\
CNRS U.M.R. 8524\\
59655 Villeneuve d'Ascq CEDEX\\
France}
\end{document}